\documentclass[11pt]{article}
\usepackage[latin1]{inputenc}
\usepackage{amsmath,amsthm,amssymb}
\usepackage{amsfonts}
\usepackage{amsmath,amsthm,amssymb,amscd}
\usepackage{latexsym}
\usepackage{color}
\usepackage{graphicx}
\usepackage{mathrsfs}
\usepackage{cite}
\usepackage{color,enumitem,graphicx}
\usepackage[colorlinks=true,urlcolor=black,
citecolor=black,linkcolor=black,linktocpage,pdfpagelabels,
bookmarksnumbered,bookmarksopen]{hyperref}

\makeatletter \@addtoreset{equation}{section} \makeatother

\newtheorem{theorem}{Theorem}[section]

\newtheorem{proposition}{Proposition}[section]
\newtheorem{lemma}{Lemma}[section]

\newtheorem{corollary}[theorem]{Corollary}

\allowdisplaybreaks

\everymath{\displaystyle}

\begin{document}

\title{Multi-peak solutions for a critical Choquard problem in dimension 2}

\author{Luca Battaglia\footnote{E-mail address:\tt{ luca.battaglia@uniroma3.it} (L. Battaglia).}\\
\footnotesize Dipartimento di Matematica e Fisica, Universit\`a degli Studi Roma Tre, Largo S. Leonardo Murialdo 1, 00146 Roma, Italy\\
Wenshan Luo\footnote{E-mail address:\tt{ mathluows@163.com} (W. Luo).}\\
\footnotesize School of Mathematics and Statistics, Southwest University, Chongqing, 400715, P.R. China}
\date{}
\maketitle

\begin{abstract}
We consider the following Choquard problem:
$$\left\{\begin{array}{ll}-\Delta u=\varepsilon^{4-\alpha}(I_\alpha\ast ke^u)ke^u&\text{in }\Omega,\\u=0&\text{on }\partial\Omega;\end{array}\right.$$
here, $\Omega\subset\mathbb R^2$ is smooth and bounded, $I_\alpha(x):=\frac1{|x|^\alpha}$ is the Riesz functional for $\alpha\in(0,2)$ and $k\in C^1\left(\overline\Omega\right)$ is strictly positive.\\
We show the existence of solutions which concentrate at a finite number of points $\xi_1,\dots,\xi_m\in\Omega$, for either $m=1$ or: any $m\in\mathbb N$, $\alpha>\frac12$ and $\Omega$ multiply connected.\\
Up to our knowledge, this is the first existence result for concentrating solutions for Choquard-type problems in planar domains.
\end{abstract}

\section{Introduction}\

We consider the following problem:
\begin{equation}\label{eq. original problem}
\boxed{\left\{\begin{array}{ll}-\Delta u(x)=\varepsilon^{4-\alpha}\left(\int_\Omega\frac{k(y)e^{u(y)}}{|x-y|^\alpha}dy\right)k(x)e^{u(x)}&x\in\Omega,\\u(x)=0&x\in\partial\Omega;\end{array}\right.}
\end{equation}
here, $\Omega\subset\mathbb R^2$ is a smooth bounded domain, $\alpha\in(0,2)$ is fixed and $k(x)\in C^1\left(\overline\Omega\right)$ satisfies $\inf_\Omega k>0$.\\

Choquard-type problems have been increasingly studied in recent years.\\
It has been first introduced by Pekar in \cite{pek} as a model in quantum mechanics. Then, Penrose in \cite{pen} proposed it in a self-gravitation model.\\
Several results have been obtained for the problem on the whole space $\mathbb R^N$, at first for power-type nonlinearities \cite{lieb,lio} and then for general nonlinearities \cite{mvs,bvs}.\\
The study of Choquard-type equations in bounded domains is more recent. Solutions have been obtained variationally for some sub-critical nonlinearities in \cite{gp,bc}; in the case of critical nonlinearities, concentrating solutions have been constructed in \cite{ghp} and some asymptotic analysis is made in \cite{ccyz}. However, it seems that no concentrating solutions have been constructed in the case of planar domains.\\

Problem \eqref{eq. original problem} is a natural nonlocal generalization, via the convolution with the Riesz potential $I_\alpha(x):=\frac1{|x|^\alpha}$, of the well-known Liouville equation on planar domains:
\begin{equation}\label{liouville}
\left\{\begin{array}{ll}-\Delta u(x)=\varepsilon^2k(x)e^{u(x)}&x\in\Omega,\\u(x)=0&x\in\partial\Omega.\end{array}\right.
\end{equation}
In fact, thanks to the Hardy-Littlewood-Sobolev inequality
$$\|I_\alpha\ast f\|_{\frac{2p}{2-\alpha p}}\le C\|f\|_p\qquad\forall p\in\left[1,\frac2\alpha\right),$$
exponential nonlinearities are critical for problem \eqref{eq. original problem} just as they are critical for \eqref{liouville}. On the other hand, since $c_\alpha I_\alpha\underset{\alpha\to2}\rightharpoonup\delta_0$ in the sense of measures, for some $c_\alpha\underset{\alpha\to2}\to0$, \eqref{liouville} can be seen, at least formally, as a limiting case of \eqref{eq. original problem} as $\alpha$ goes to $2$.\\

Problem \eqref{liouville} has been intensively studied for decades, concerning several aspects: solutions are obtained variationally in \cite{dja} and a fine blow-up analysis is given in \cite{bm,ls,mw}.\\
Using perturbative methods, in \cite{dkm,esp} there were constructed the so-called multi-peak solutions, namely solutions concentrating at a finite number of given points. The aforementioned blow-up results showed that, under a finite mass assumption, such are the only possible blow-up configuration.\\
The aim of the present paper is to construct multi-peak solutions to \eqref{eq. original problem} in the same spirit as \cite{dkm,esp}.\\

Solutions are constructed using a Lyapunov-Schmidt finite-dimensional reduction.\\
As a first step, we introduce an \emph{approximated solution} $U$ to problem \eqref{eq. original problem}. This is done by taking a superposition of solutions to the limiting problem
\begin{equation}\label{eq. limit problem}
-\Delta w(x)=\left(\int_{\mathbb R^2}\frac{e^{w(y)}}{|x-y|^\alpha}dy\right)e^{w(x)}\qquad x\in\mathbb R^2,
\end{equation}
suitably rescaled and translated.\\
Solutions to \eqref{eq. limit problem} have been classified in some recent works \cite{guopeng_2024,niu,gluck,yangyu} under mild extra assumptions; they actually coincide with the well-known family of \emph{bubbles}. A key property in this step is the invariance by translations and dilation of problem \eqref{eq. limit problem}. More details about this will be given later in the paper.\\
The approximate solutions is then shown to be indeed a good approximation for a solution to \eqref{eq. original problem}, in some suitable norm.\\

At this point, a finite-dimensional reduction is applied. In other words, we may apply a fixed point argument on the orthogonal complement of the kernel of the linearized operator, which then	 has crucial importance.\\
On the other hand, to get a true solution to problem \eqref{eq. original problem}, we need to solve a finite-dimensional equation. This is equivalent to finding the location of the centers $\xi_1,\dots,\xi_m$ of the $m$ copies ot the bubble. As a final step, we show that such points must be critical for the \emph{reduced functional}
\begin{equation}\label{phi}
\boxed{\Phi_m(\xi)=\sum_{j=1}^m\left(\frac8{4-\alpha}\log k(\xi_j)+H(\xi_j,\xi_j)\right)+\sum_{k\ne j}G(\xi_j,\xi_k),}
\end{equation}
which therefore plays a major role; here, $G(x,y)$ is the Green's function of $-\Delta$ on $\Omega$, that is the solution to
$$\left\{\begin{array}{ll}-\Delta G(x,y)=8\pi\delta_y(x)&x\in\Omega\\G(x,y)=0&x\in\partial\Omega,\end{array}\right.$$
and by $H(x,y)\in C^\infty\left(\Omega\setminus\{y\}\right)$ its regular part
\begin{equation}\label{regular part}
H(x,y):=G(x,y)-4\log\frac1{|x-y|}.
\end{equation}
Since we want the points $\xi_1,\dots,\xi_m$ not to coincide with each other, we look for the $m$-tuple $(\xi_1,\dots,\xi_m)$ in the Cartesian product $\Omega^m:=\Omega\times\dots \times\Omega$ minus its \emph{diagonal}:
$$\Delta:=\{(\xi_1,\cdots,\xi_m)\in\Omega^m:\,\xi_j=\xi_k\,\text{ for some }j\ne k\}.$$
The definition of $\Phi_m$ is very similar, although not identical to the reduced functional in \cite{dkm,esp}; therefore, it can be treated using results from such papers.\\

We are now in position to state the main result from this paper.

\begin{theorem}\label{th.1}
Assume $\alpha\in\left(\frac12,2\right)$ and $\Phi_m$ has a stable critical point $\xi=(\xi_1,\dots,\xi_m)\in\Omega^m\setminus\Delta$. Then for sufficiently small $\varepsilon>0$, problem \eqref{eq. original problem} admits a family of $m$-peak concentrating solutions.\\
More precisely, there exists $\varepsilon_0>0$ such that, for any $\varepsilon\in(0,\varepsilon_0)$, there exists a solution $u_\varepsilon$ to \eqref{eq. original problem} such that:
\begin{itemize}
\item $\varepsilon^{4-\alpha}\int_\Omega(I_\alpha\ast ke^{u_\varepsilon})ke^{u_\varepsilon}\underset{\varepsilon\to0}\to8m\pi;$
\item $u_\varepsilon\underset{\varepsilon\to0}\to-\infty$ in $L^\infty_{\mathrm{loc}}(\Omega\setminus\{\xi_1,\dots,\xi_m\})$.
\end{itemize}
\end{theorem}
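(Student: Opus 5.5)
We follow the Lyapunov--Schmidt scheme of \cite{dkm,esp}, adapted to the nonlocal nonlinearity. First we fix the bubbles. The solutions of \eqref{eq. limit problem} are of the form $w_{\delta,\xi}(x)=\log\frac{c_\alpha\delta^{4-\alpha}}{(\delta^2+|x-\xi|^2)^{\frac{4-\alpha}{2}}}$ for a suitable constant $c_\alpha>0$. One checks this by computing $I_\alpha\ast e^{w}$ explicitly, using the conformal invariance of the Riesz potential. These bubbles have total mass $\int(I_\alpha\ast e^w)e^w=8\pi$.

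\textbf{Approximate solution.} For each $j$ we choose a scale $\delta_j=d_j\varepsilon$, with $d_j$ to be fixed. We set $U_j=Pw_j$, the projection onto $H^1_0(\Omega)$, and $U=\sum_jU_j$. Expanding gives $Pw_j=w_j-\log(c_\alpha\delta_j^{4-\alpha})+H(\cdot,\xi_j)+O(\delta_j^2)$. Near $\xi_j$ we have $\sum_{k\ne j}U_k\approx G(\xi_j,\xi_k)$. The convolution $(I_\alpha\ast ke^{U})(x)$ is dominated by the $j$-th bubble, while the other bubbles contribute only $O(\varepsilon^{4-\alpha})$ relative corrections.

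Matching $\varepsilon^{4-\alpha}k(x)e^{U}(I_\alpha\ast ke^U)$ with $e^{w_j}(I_\alpha\ast e^{w_j})$ at $\xi_j$ fixes the $d_j$. They are determined by
\[
\log d_j^{\,\cdots}\sim \tfrac{2}{4-\alpha}\Big(\log k(\xi_j)+\tfrac{4-\alpha}{8}\big(H(\xi_j,\xi_j)+\textstyle\sum_{k\ne j}G(\xi_j,\xi_k)\big)\Big),
\]
with coefficients chosen so that the reduced energy becomes a multiple of $\Phi_m$. Here the factor $\frac{8}{4-\alpha}$ arises because $k$ enters quadratically through the two factors $ke^u$, while the potential enters with weight $(4-\alpha)/2$ per bubble.

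Next we estimate the error $R=\Delta U+\varepsilon^{4-\alpha}(I_\alpha\ast ke^U)ke^U$ in a weighted $L^\infty$ norm, for instance $\|h\|_*=\sup_x\sum_j\frac{\delta_j^{\sigma}}{(\delta_j+|x-\xi_j|)^{2+\sigma}}{}^{-1}|h|$. The goal is $\|R\|_*=O(\varepsilon^{\beta})$ for some $\beta>0$.

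\textbf{Linear theory.} The linearized operator is
\[
L\phi=\Delta\phi+W\phi+\varepsilon^{4-\alpha}\big(I_\alpha\ast(ke^U\phi)\big)ke^U,\qquad W=\varepsilon^{4-\alpha}(I_\alpha\ast ke^U)ke^U.
\]
We need the nondegeneracy of the limit linearization
\[
-\Delta\phi=(I_\alpha\ast e^w)e^w\phi+(I_\alpha\ast e^w\phi)e^w,
\]
namely that its bounded kernel is spanned by $\partial_{\xi_1}w,\partial_{\xi_2}w,\partial_\delta w$. We would take this from the classification literature \cite{guopeng_2024,yangyu}, or else prove it by stereographic projection to $S^2$ followed by a spherical-harmonics decomposition, in which the Riesz operator is diagonal with explicit eigenvalues.

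With nondegeneracy in hand, the standard blow-up/contradiction argument gives $\|\phi\|_\infty\le C|\log\varepsilon|\,\|h\|_*$ for the projected problem orthogonal to the translation modes. The dilation mode is handled via $P$ as in \cite{dkm,esp}. The nonlocal term needs extra care here, because decay of $\phi$ does not localize $I_\alpha\ast(e^U\phi)$. We would control it by splitting the convolution into the near region $|y-\xi_j|<\delta_j R$ and the far region, and using HLS together with the decay $e^{w}\sim|x|^{-2(4-\alpha)}$.

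\textbf{Nonlinear problem and reduction.} A contraction argument in $\{\|\phi\|_\infty\le C\varepsilon^\beta|\log\varepsilon|\}$ solves the projected nonlinear problem. The reduced energy is
\[
J(U+\phi)=c_0m+c_1m\log\varepsilon-c_2\Phi_m(\xi)+o(1)
\]
in $C^1$ uniformly on compacts of $\Omega^m\setminus\Delta$. Stability of the critical point then gives a critical point $\xi^\varepsilon\to\xi$, and the resulting $u_\varepsilon$ has the stated mass and concentration properties. The mass tends to $8m\pi$ by construction, and outside the $\xi_j$ we have $U\approx\sum G(\cdot,\xi_j)+O(1)$. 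Since $u$ solves the equation with right-hand side $\to8\pi\sum\delta_{\xi_j}$, on compacts away from the $\xi_j$ we get $\varepsilon^{4-\alpha}e^{u}\to0$ in the required sense.

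\textbf{Main obstacle.} We expect the hardest step to be the error estimate, and it is where $\alpha>\tfrac12$ enters. The interaction tail $\varepsilon^{4-\alpha}ke^{U}\big(I_\alpha\ast ke^{U_k}\big)$, with $k\ne j$, together with the difference between $I_\alpha\ast(ke^U)$ and its bubble approximation, decays only like $|x-\xi_j|^{-\alpha}$ rather than locally. Weighted against $e^{U}\sim\delta^{4-\alpha}|x|^{-2(4-\alpha)}$, this produces errors of order $\varepsilon^{\min(\alpha,\ldots)}$ scaled by powers related to $4-\alpha$. We would first compute precisely the expansion of $I_\alpha\ast e^{w_{\delta,\xi}}$ at distance $\gg\delta$, namely $8\pi$-type mass times $|x-\xi|^{-\alpha}$ plus $O(\delta^2|x|^{-\alpha-2})$. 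We would then check that the worst term is $O(\varepsilon^{2\alpha-1})$ in $\|\cdot\|_*$, which is small exactly when $\alpha>\tfrac12$. If a sharper expansion is needed, we would add a correction term to $U$ to absorb the leading interaction.
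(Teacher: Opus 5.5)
Your overall architecture is the paper's. You tie the scales to $\xi$, use a weighted $L^\infty$ norm, invert $\mathcal L$ orthogonally to the translation modes only (with a $\log\frac1\varepsilon$ loss from the dilation mode), solve for $\phi$ by contraction, and reduce to $\nabla\Phi_m$. You phrase the last step through the energy, whereas the paper directly computes the projections $\int_{\Omega_\varepsilon}(\mathcal E-\mathcal L\phi-\mathcal N(\phi))z_{i,j}$. The gap is that you misplace the one step where $\alpha>\frac12$ is used, and that step is asserted rather than proved.

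The root is your claim that the other bubbles change $I_\alpha\ast ke^U$ near $\xi_j$ only by a relative $O(\varepsilon^{4-\alpha})$. In fact bubble $k$ contributes its whole mass times $|\xi_j-\xi_k|^{-\alpha}$, which relative to the self-interaction is $O(\varepsilon^\alpha)$; this is the term \eqref{knotj2}. Accounting for it, $\|\mathcal E\|_\ast=O(\varepsilon^{\min\{\alpha,1\}})$ for every $\alpha\in(0,2)$ (Proposition \ref{le. estimate of E}). No $\varepsilon^{2\alpha-1}$ appears in $\|\cdot\|_\ast$, so the error estimate is not where the hypothesis enters. Your bookkeeping is also internally inconsistent: taken literally, $\|R\|_\ast\sim\varepsilon^{2\alpha-1}$ would give $\|\phi\|_\infty\sim\varepsilon^{2\alpha-1}\log\frac1\varepsilon$ and would force $\alpha>\frac34$ in the reduction.

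The hypothesis is needed in the reduction. Because of the $\varepsilon^\alpha$ interaction, $\|\phi\|_\infty$ is only $O(\varepsilon^{\min\{\alpha,1\}}\log\frac1\varepsilon)$, whereas $\nabla\Phi_m$ appears at order $\varepsilon$. The paper shows
$$\int_{\Omega_\varepsilon}\mathcal Ez_{i,j}=-\frac{(4-\alpha)^2\pi}4\,\varepsilon\,\partial_{\xi_{i,j}}\Phi_m(\xi)+O\left(\varepsilon^{\min\{1+\alpha,2\}}\log^2\frac1\varepsilon\right),$$
where the $O(\varepsilon^\alpha)$ interaction is killed by the oddness of $z_{i,j}$ and only its gradient, of order $\varepsilon^{1+\alpha}$, survives. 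On the other hand,
$$\int_{\Omega_\varepsilon}(\mathcal L\phi+\mathcal N(\phi))z_{i,j}=O\left(\varepsilon^{\min\{2\alpha,2\}}\log^2\frac1\varepsilon\right).$$
This holds because $\mathcal N$ is quadratic in $\phi$, and $\int\phi\,\mathcal L(\eta_{2,j}z_{i,j})$ gains only a factor $\varepsilon^\alpha$, since $\mathcal L$ differs from the one-bubble linearization by that same interaction. Demanding $\varepsilon^{2\alpha}\log^2\frac1\varepsilon=o(\varepsilon)$ is exactly $\alpha>\frac12$.

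In your formulation, this says the $\phi$-dependent part of $\nabla_\xi J(U+\phi_\xi)$ is $O(\varepsilon^{2\alpha-1}\log^2\frac1\varepsilon)$ once the chain-rule factor $\varepsilon^{-1}$ is included. So your exponent is right, but it lives in the reduction. Your assertion that the expansion holds \emph{in $C^1$ with $o(1)$ error} is precisely the unproved crux. The $\phi$-correction to the reduced energy itself is $O(\varepsilon^{2\alpha}\log^2\frac1\varepsilon)$ for every $\alpha$; only the derivative information costs $\alpha>\frac12$. Your fallback idea of correcting $U$ to absorb the $\varepsilon^\alpha$ interaction is the natural way to attack the range the paper leaves open.

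There are also some smaller corrections.
\begin{itemize}
\item The scaling-invariant bubble is $e^{w_{\delta,\xi}}=c_\alpha\delta^{\frac{4-\alpha}2}(\delta^2+|x-\xi|^2)^{-\frac{4-\alpha}2}$, since the invariance is $w\mapsto w(\cdot/\lambda)-\frac{4-\alpha}2\log\lambda$. The power $\delta^{4-\alpha}$ is wrong.
\item Its mass is $\int_{\mathbb R^2}\frac{2(4-\alpha)\delta^2}{(\delta^2+|x|^2)^2}dx=2(4-\alpha)\pi$, not $8\pi$. Correspondingly, $U\to\frac{4-\alpha}4\sum_jG(\cdot,\xi_j)$ away from the peaks. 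So the construction yields total mass $2(4-\alpha)m\pi$, which equals $8m\pi$ only as $\alpha\to0$.
\item $u_\varepsilon>0$ by the maximum principle, so the second bullet can only mean $\varepsilon^{4-\alpha}e^{u_\varepsilon}\to0$ locally uniformly away from the peaks. That is how you read it, and it is the right reading.
\end{itemize}
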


We are able to get existence of results only if the parameter $\alpha$ is far enough from the limiting case $\alpha=0$, as the nonlocal interaction between the peaks seems to be too strong to get good enough estimates. However, we believe this is just a technical issue and we plan to study the remaining cases in a forthcoming paper.\\

To get existence of concentrating solutions to \eqref{eq. original problem} we therefore need some sufficient conditions for the existence of stable critical points for $\Phi_m$.\\
Finding critical points of $\Phi_m$ is, in general, quite a challenging task; however, we may exploit some previous results to get solutions in a wide range of cases. 

\begin{corollary}
Assume $\Omega$ is not simply connected. Then for $\varepsilon>0$ sufficiently small, problem \eqref{eq. original problem} admits a family of $m$-peak solutions.
\end{corollary}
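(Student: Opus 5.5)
The corollary will follow from Theorem \ref{th.1} once we produce a critical point of $\Phi_m$ in $\Omega^m\setminus\Delta$ that is stable in the sense the reduction needs. As the abstract indicates, we take $\alpha\in\left(\frac12,2\right)$ when $m\ge2$; for $m=1$ we will invoke the single-peak version of the result, which is free of this restriction. The plan is to rewrite $\Phi_m$ as the reduced functional of a Liouville-type problem and then quote the topological existence results of \cite{dkm,esp}.

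First, rescale the constant in front of $\log k$. Set $\tilde k:=k^{\frac{8}{4-\alpha}\cdot\frac14}$. Then
$$\frac8{4-\alpha}\log k=4\log\tilde k,$$
and with the normalization $-\Delta G=8\pi\delta_y$ used here, $\Phi_m$ becomes exactly the functional studied in \cite{dkm,esp} for $-\Delta u=\varepsilon^2\tilde k e^u$:
$$\sum_j\bigl(4\log\tilde k(\xi_j)+H(\xi_j,\xi_j)\bigr)+\sum_{k\ne j}G(\xi_j,\xi_k),$$
possibly up to a harmless positive factor. The new weight $\tilde k$ is still $C^1\left(\overline\Omega\right)$ and bounded away from zero, so every hypothesis of those papers holds.

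Second, use the min-max construction of \cite{dkm}, which is based on the nontrivial topology of $\Omega$. When $\Omega$ is not simply connected, some component of $\mathbb R^2\setminus\Omega$ is bounded, and this yields a nontrivial relative homology or cohomology class for the pair $\left(\Omega^m\setminus\Delta,\{\Phi_m\le -L\}\right)$ with $L$ large. The relevant boundary behaviour is:
\begin{itemize}
\item $H(\xi,\xi)\to-\infty$ as $\xi\to\partial\Omega$;
\item $G(\xi_j,\xi_k)\to+\infty$ as $\xi_j$ and $\xi_k$ collide.
\end{itemize}
With a linking or min-max argument over the class, these give a critical value $c$ such that the critical set $K_c$ is compact in $\Omega^m\setminus\Delta$. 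The corresponding $\Phi_m$ stays away from boundary effects, i.e.\ it is $C^1$-stable.

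Third, we must check that this is the notion of stability Theorem \ref{th.1} requires. The notion that suits a Lyapunov--Schmidt reduction is the following: every $C^1$-small perturbation of $\Phi_m$ in a neighbourhood of $K_c$ still has a critical point there. This is precisely the form produced by min-max via a linking class. The reduced energy of the approximate solution equals a constant plus $c_\alpha\Phi_m(\xi)+o(1)$ in $C^1$, uniformly on compact subsets of $\Omega^m\setminus\Delta$, so a stable critical set persists. For $m=1$, a global maximum or minimum of $\tilde k$-type terms is not even needed: $\Phi_1(\xi)\to-\infty$ at $\partial\Omega$, so a strict interior maximum always exists and is stable.

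The main obstacle is the compatibility check in the third step. If the statement of Theorem \ref{th.1} means a nondegenerate critical point instead, the topological argument of \cite{dkm} does not directly provide one. In that case we would either restate Theorem \ref{th.1} with the $C^1$-stable critical set notion, since the reduction proof only uses degree or linking persistence, or invoke genericity in $k$. We would try the first option, remarking that the reduction in the proof of Theorem \ref{th.1} never uses more than persistence under $C^1$ perturbations.
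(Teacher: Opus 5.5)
Your route is the paper's: its proof of the corollary is only Theorem \ref{th.1} combined with the critical point of $\Phi_m$ produced by the topological argument of \cite[Theorem 1]{dkm}, and it never says what ``stable'' means, so your discussion is more careful than the original. One correction to your fallback: the reduction behind Theorem \ref{th.1} (Lemma \ref{estimateLN} and Proposition \ref{mainorder}) only shows that the reduced equations amount to $\nabla\Phi_m(\xi)+o(1)=0$ with a remainder that is small in $C^0$ but not known to be a gradient, so a min-max critical set that is merely stable under $C^1$ perturbations of the functional does not suffice as the reduction stands; you would have to add the standard variational step (showing $c_{i,j}(\xi)=0$ exactly at critical points of the energy of \eqref{eq. change original problem} evaluated at $V+\phi$), or else produce a critical point of nonzero local degree, and the paper also passes over this silently.
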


\begin{proof}
By Theorem \eqref{th.1}, it is sufficient to prove that the functional $\Phi_m(\xi)$ has a stable a critical point. If $\Omega$ is not simply connected, then $\Phi_m(\xi)$ has a critical point by \cite[Theorem 1]{dkm}.
\end{proof}\

In \cite[Theorem 5.4]{esp} the authors showed that, for any $m\ge2$, $\phi_m$ may have critical points even on simply connected domains, known as dumb-bell domains. Roughly speaking, they consist in a $m$ separate simply-connected region joined by a sufficiently narrow strip; see \cite[Section 5]{esp} for a formal definition.\\
Thanks to Theorem \ref{th.1} we get existence of concentrating solutions also in this case.\\

In the case $m=1$ of $1$-peak solutions, we are able to cover the whole range of $\alpha$ by the very same argument. In fact, we do not have issues from the mutual interaction of multiple peaks.

\begin{theorem}\label{m=1}
Assume $\alpha\in(0,2)$ and $m=1$. Then for sufficiently small $\varepsilon>0$, problem \eqref{eq. original problem} admits a family of $1$-peak concentrating solutions, in the same sense as Theorem \ref{th.1}.
\end{theorem}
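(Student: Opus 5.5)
The proof will follow the same Lyapunov--Schmidt scheme as Theorem \ref{th.1}, specialized to a single bubble. The key difference is that for $m=1$ there is no interaction between distinct peaks, which was the only source of the restriction $\alpha>\frac12$. We also need a stable critical point of $\Phi_1$, and we will produce it without any geometric assumption on $\Omega$.

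\emph{Step 1: existence of a stable critical point of $\Phi_1$.} Here
$$\Phi_1(\xi)=\frac8{4-\alpha}\log k(\xi)+H(\xi,\xi).$$
The Robin function satisfies $H(\xi,\xi)\to-\infty$ as $\mathrm{dist}(\xi,\partial\Omega)\to0$; indeed $H(\xi,\xi)\sim 4\log(2\,\mathrm{dist}(\xi,\partial\Omega))$. Since $\log k$ is bounded on $\overline\Omega$ because $\inf k>0$, $\Phi_1$ attains its maximum in the interior. The maximum set is compact in $\Omega$, so a $C^1$-stable critical point exists in the sense used in \cite{dkm,esp}: a maximum is stable under $C^1$-small perturbations, via the degree or min-max characterization on a sublevel neighbourhood. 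This is exactly the kind of stability needed in the reduction, since the reduced energy will be a $C^1$-small perturbation of $\Phi_1$. Only $k\in C^1$ is assumed, so we rely on $C^1$-stability rather than nondegeneracy.

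\emph{Step 2: the reduction for $m=1$.} We take the approximate solution $U=PW_{\delta,\xi}$, where $W_{\delta,\xi}$ is a bubble solving \eqref{eq. limit problem}, $P$ is the projection onto $H^1_0(\Omega)$, and the scaling $\delta=\delta(\varepsilon,\xi)$ is chosen so that the constant in the expansion $PW=W-\log(\dots)+H(\cdot,\xi)+o(1)$ matches $\log k(\xi)$. The factor $\frac{8}{4-\alpha}$ in \eqref{phi} comes from this matching, because the nonlinearity scales like $e^{(4-\alpha)\cdot}$ in the dilation.

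Next we estimate the error $R=\Delta U+\varepsilon^{4-\alpha}(I_\alpha\ast ke^U)ke^U$ in a weighted $L^\infty$ or $L^p$ norm. Then we prove invertibility of the linearized operator on the orthogonal complement of $\{\partial_{\xi_i}PW,\ \partial_\delta PW\}$. For this we use the nondegeneracy of the bubble for \eqref{eq. limit problem}, taken from the classification results \cite{guopeng_2024,niu,gluck,yangyu}, and a standard blow-up/contradiction argument, where the nonlocal term $I_\alpha\ast(ke^U\phi)$ is controlled by Hardy--Littlewood--Sobolev. A contraction mapping argument then yields the correction $\phi$ with $\|\phi\|$ small.

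The main obstacle for general $\alpha$ in the multi-peak case is the cross term $\int_\Omega\frac{e^{U(y)}}{|x-y|^\alpha}dy$ evaluated near a different peak $\xi_k$. This term decays only like $|\xi_j-\xi_k|^{-\alpha}$ times the mass, which does not fit the error bounds when $\alpha\le\frac12$. With a single peak this term is absent. The self-interaction $I_\alpha\ast e^U$ near $\xi$ is simply the bubble's own nonlocal potential plus $O(\delta^{\beta})$ corrections from the cut-off by $\Omega$ and from $k(y)-k(\xi)=O(|y-\xi|)$. These yield errors of order $\varepsilon^{\gamma}$ for some $\gamma>0$ for every $\alpha\in(0,2)$. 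So all estimates in the proof of Theorem \ref{th.1} go through verbatim.

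\emph{Step 3: the reduced problem.} Finally, the reduced energy satisfies $J(U+\phi)=c_\varepsilon-c\,\Phi_1(\xi)+o(1)$ in $C^1$ locally uniformly in $\Omega$, with $c>0$. Critical points of the reduced energy give genuine solutions, so the stable maximum from Step 1 persists and yields $u_\varepsilon$. The mass convergence to $8\pi$ and $u_\varepsilon\to-\infty$ away from $\xi$ follow from the expansion $u_\varepsilon=PW+\phi$ exactly as in Theorem \ref{th.1}.
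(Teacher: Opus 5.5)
Your Step 1 matches the paper's argument: $\Phi_1$ has an interior global maximum because $H(\xi,\xi)\to-\infty$ at $\partial\Omega$. The gap is in Step 2.

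\textbf{The parameter count does not close.} You fix $\delta=\delta(\varepsilon,\xi)$ by the matching condition, so your only free parameters are the two coordinates of $\xi$. Yet you invert the linearized operator on the orthogonal complement of three directions, $\partial_{\xi_1}PW$, $\partial_{\xi_2}PW$ and $\partial_\delta PW$. The projected problem then carries three Lagrange multipliers $c_0,c_1,c_2$. Criticality of $\xi\mapsto J(U_\xi+\phi_\xi)$ gives only two linear relations among them, namely the pairings with $\frac{d}{d\xi_k}(U_\xi+\phi_\xi)$ for $k=1,2$. So ``critical points of the reduced energy give genuine solutions'' does not follow, and the dilation multiplier $c_0$ has no reason to vanish.

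This is exactly the obstruction the paper points out at the start of Section 4. Overcoming it is the main technical content of the proof. The paper imposes orthogonality only to the translation modes $z_{1,j},z_{2,j}$ and proves the weaker inverse bound $\|\phi\|_\infty\le C\log\frac1\varepsilon\|h\|_\ast$ (Lemma \ref{le. inver2}). That bound needs the modified dilation element $\widetilde z_{0,j}$, cut off with the harmonic function $h_j$ (Lemma \ref{ltildez0j}), and the sign estimate $\int_{\Omega_\varepsilon}\widetilde z_{0,j}\mathcal L\widetilde z_{0,j}\le-\frac{C}{\log\frac1\varepsilon}$. The ``standard blow-up argument'' you describe only gives Lemma \ref{le. inver1}, which uses all three orthogonality conditions. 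The nondegeneracy it relies on comes from \cite{gaol}, not from the classification papers. The alternative would be to make $\delta$ a genuine third parameter, but then you must expand the energy in $\delta$ and solve the extra equation, which you do not do.

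\textbf{Your account of why $m=1$ removes the restriction $\alpha>\frac12$ is not quantitatively correct.} An error of order $\varepsilon^\gamma$ ``for some $\gamma>0$'' is not enough. The multi-peak error $\varepsilon^{\min\{\alpha,1\}}$ is also of that form, and the scheme fails there for $\alpha\le\frac12$. Because of the $\log\frac1\varepsilon$ loss, one gets $\|\phi\|_\infty=O(\varepsilon^\gamma\log\frac1\varepsilon)$. The contributions of $\mathcal L\phi$ and $\mathcal N(\phi)$ to the reduced equation are then $O(\varepsilon^{2\gamma}\log^2\frac1\varepsilon)$. These must be $o(\varepsilon)$ against the leading term $-\frac{(4-\alpha)^2\pi}{4}\varepsilon\,\partial_{\xi_i}\Phi_1(\xi)$ of Proposition \ref{mainorder}, which forces $\gamma>\frac12$. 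The same threshold governs a $C^1$ expansion of your reduced energy. What the paper actually uses for $m=1$ is that the $\varepsilon^\alpha$ terms \eqref{knotj1} and \eqref{knotj2} are absent. Hence $\|\mathcal E\|_\ast\le C\varepsilon$, $\|\phi\|_\infty\le M\varepsilon\log\frac1\varepsilon$, and the remainder is $O(\varepsilon^2\log^2\frac1\varepsilon)$ for every $\alpha\in(0,2)$.
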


The plan of the paper is as follows.\\
In Section 2 we introduce our ansatz for the solution, which in Section 3 we show to be a sufficiently good approximation. In Section 4 we analyze the linearized operator; in Section 5 we study the nonlinear operator to which we apply the fixed-point argument. Finally, in Section 6 we perform the finite-dimensional reduction and prove the main results.

\section{Ansatz}\

In this section we will give an \emph{ansatz} for solutions to problem \eqref{eq. original problem}.\\

We define, for $\mu>0$ and $\xi\in\mathbb R^2$, the bubble
\begin{equation}\label{bubble form}
w_{\mu,\xi}(x)=\frac{4-\alpha}2\log\left(\frac{C_\alpha\mu}{\mu^2+|x-\xi|^2}\right),
\end{equation}
where 
$$C_\alpha:=\left(\frac{(2-\alpha)(4-\alpha)}\pi\right)^{\frac1{4-\alpha}}.$$
It can be explicitly verified that \eqref{bubble form} is a solution to \eqref{eq. limit problem}.\\
Actually, it was shown in the papers \cite{guopeng_2024,niu,gluck,yangyu} that all solutions $w\in L^1_\mathrm{loc}\left(\mathbb R^2\right)$ to \eqref{eq. limit problem} satisfying the finite-mass condition
$$\int_{\mathbb R^2}e^{\frac4{4-\alpha}w(x)}dx<+\infty$$
are indeed in the form \eqref{bubble form}.\\

We fix a positive integer $m\in\mathbb N$ and choose $m$ distinct points $\xi_1,\dots,\xi_m\in\Omega$ and $m$ positive numbers $\mu_1,\dots,\mu_m>0$. We define
$$u_j(x):=\frac{4-\alpha}2\log\left(\frac{C_\alpha\mu_j}{\left(\mu_j^2\varepsilon^2+|x-\xi_j|^2\right)k^{\frac2{4-\alpha}}(\xi_j)}\right)=w_j\left(\frac x\varepsilon\right)-(4-\alpha)\log\varepsilon-\log k(\xi_j),$$
where $w_j:=w_{\mu_j,\frac{\xi_j}\varepsilon}$ is a bubble in the form \eqref{bubble form}. Clearly $w_j$ solves
$$-\Delta u_j(x)=\varepsilon^{4-\alpha}\left(\int_{\mathbb R^2}\frac{k(\xi_j)e^{u_j(y)}}{|x-y|^\alpha}dy\right)k(\xi_j)e^{u_j(x)}\qquad\text{in }\mathbb R^2.$$\

Since we look for solutions $u\in H^1_0(\Omega)$ to \eqref{eq. original problem}, we need to add to each $u_j$ an extra term in order to get zero Dirichlet boundary conditions. More precisely, we define the function $H_j(x)$ as the solution to
$$\left\{\begin{array}{ll}-\Delta H_j(x)=0&x\in\Omega,\\H_j(x)=-w_j\left(\frac x\varepsilon\right)+(4-\alpha)\log\varepsilon+\log k(\xi_j)&x\in\partial\Omega.\end{array}\right.$$
We then define the approximate solution $U$ as
$$\boxed{U(x):=\sum_{j=1}^m(u_j(x)+H_j(x))\in H^1_0(\Omega).}$$\

Let us now deduce some basic yet crucial estimates for $U(x)$.\\
Thanks to the explicit expression
$$w_j\left(\frac x\varepsilon\right)=\frac{4-\alpha}2\log\left(\frac{C_\alpha\mu_j\varepsilon^2}{\mu_j^2\varepsilon^2+|x-\xi_j|^2}\right)$$
we can write, as $\varepsilon\to0$:
\begin{equation}\label{eq.estimate of Hj}
H_j(x)=\frac{4-\alpha}4H(x,\xi_j)-\frac{4-\alpha}2\log(C_\alpha\mu_j)+\log k(\xi_j)+O\left(\mu_j^2\varepsilon^2\right),
\end{equation}
uniformly in $x\in\partial\Omega$; in fact, from the definition \eqref{regular part} of $H$, $H(x,\xi_j)$ is harmonic and coincides with $-4\log\frac1{|x-\xi_j|}$ for $x\in\partial\Omega$. Using the maximum principle, the same estimates holds uniformly on $\overline\Omega$.\\
On the other hand, for $x\ne \xi_j$ and away from $\xi_j$ we have:
$$u_j(x)=\frac{4-\alpha}2\log(C_\alpha\mu_j)-\log k(\xi_j)-(4-\alpha)\log|x-\xi_j|+O\left(\mu_j^2\varepsilon^2\right),$$
hence, similarly as before,
\begin{equation}\label{eq.estimate of mu}
H_j(x)+u_j(x)=\frac{4-\alpha}4G(x,\xi_j)+O\left(\mu_j^2\varepsilon^2\right).
\end{equation}
In order that $u$ is a good approximation, we choose $\mu_j$ satisfying
\begin{equation}\label{eq. definition of mu}
\log(C_\alpha\mu_j)=\frac2{4-\alpha}\log k(\xi_j)+\frac12\left(H(\xi_j,\xi_j)+\sum_{k\ne j}G(\xi_j,\xi_k)\right);
\end{equation}
subsequent computations will justify this choice.\\

Throughout the whole paper, we will choose the points $\xi_1,\dots,\xi_m$ not collapsing on each other, neither approaching the boundary, as $\varepsilon$ goes to $0$; more precisely, we will fix some small $\tau>0$ such that
\begin{equation}\label{dist}
\mathrm{dist}(\xi_j,\partial\Omega)\ge2\tau\quad\forall j=1,\dots,m;\qquad|\xi_j-\xi_k|\ge3\tau,\quad\forall j\ne k.
\end{equation}
Under this choice, estimates \eqref{eq.estimate of Hj} and \eqref{eq.estimate of mu}, will hold uniformly on $\xi$; similarly, all the estimates throughout the paper will be meant to be uniform on $\xi$, unless otherwise specified. No further references will be made about this.\\
From \eqref{dist} we also deduce
$$\frac1C\le\mu_j\le C,\qquad\forall j\in\{1,\dots,m\},\qquad\qquad\text{for some }C>0.$$\

It will also be convenient to rescale the problem by a factor $\varepsilon$, which compensates the concentration of the bubbles $u_j$. One easily sees that $u$ satisfies problem \eqref{eq. original problem} if and only if
$$v(x):=u(\varepsilon x)+(4-\alpha)\log\varepsilon,$$
satisfies
\begin{equation}\label{eq. change original problem}
\left\{\begin{array}{ll}-\Delta v(x)=\left(\int_{\Omega_\varepsilon}\frac{k(\varepsilon y)e^{v(y)}}{|x-y|^\alpha}dy\right)k(\varepsilon x)e^{v(x)}&x\in\Omega_\varepsilon,\\v(x)=(4-\alpha)\log\varepsilon&x\in\partial\Omega_\varepsilon,
\end{array}\right.
\end{equation}
where
$$\Omega_\varepsilon:=\frac\Omega\varepsilon:=\left\{\frac x\varepsilon:\,x\in\Omega\right\}.$$
Clearly, we will choose as an approximate solution to \eqref{eq. change original problem} the function $V$ defined as:
\begin{equation}\label{V}
V(x):=U(\varepsilon x)+(4-\alpha)\log\varepsilon
\end{equation}
We will then look for solutions to \eqref{eq. change original problem} in the form:
$$v(x)=V(x)+\phi(x),$$
where $V$ is as in \eqref{V} and $\phi$ is a small perturbation.\\
It is easy to see that, in order to get a solution, $\phi$ must solve the nonlinear equation
\begin{equation}\label{probphi}
\boxed{\mathcal E-\mathcal L\phi-\mathcal N(\phi)=0.}
\end{equation}
$\mathcal E$ will be denoted as the \emph{error}, $\mathcal L\phi$ will be denoted as the \emph{linear term}, $\mathcal N(\phi)$ will be denoted as the \emph{nonlinear term}. Their definition is the following:
\begin{align}
\label{ELN}\mathcal E(x):=&\sum_{j=1}^m\left(\int_{\Omega_\varepsilon}\frac{e^{w_j(y)}}{|x-y|^\alpha}dy\right)e^{w_j(x)}-\left(\int_{\Omega_\varepsilon}\frac{k(\varepsilon y)e^{V(y)}}{|x-y|^\alpha}dy\right)k(\varepsilon x)e^{V(x)},\\
\nonumber\mathcal L\phi(x):=&\Delta\phi+\left(\int_{\Omega_\varepsilon}\frac{k(\varepsilon y)e^{V(y)}}{|x-y|^\alpha}dy\right)k(\varepsilon x)e^{V(x)}\phi(x)+\left(\int_{\Omega_\varepsilon}\frac{k(\varepsilon y)e^{V(y)}\phi(y)}{|x-y|^\alpha}dy\right)k(\varepsilon x)e^{V(x)},\\
\nonumber\mathcal N(\phi)(x):=&\left(\int_{\Omega_\varepsilon}\frac{k(\varepsilon y)e^{V(y)}e^{\phi(y)}}{|x-y|^\alpha}dy\right)k(\varepsilon x)e^{V(x)}e^{\phi(x)}-\left(\int_{\Omega_\varepsilon}\frac{k(\varepsilon y)e^{V(y)}}{|x-y|^\alpha}dy\right)k(\varepsilon x)e^{V(x)}\\
\nonumber&-\left(\int_{\Omega_\varepsilon}\frac{k(\varepsilon y)e^{V(y)}}{|x-y|^\alpha}dy\right)k(\varepsilon x)e^{V(x)}\phi(x)-\left(\int_{\Omega_\varepsilon}\frac{k(\varepsilon y)e^{V(y)}\phi(y)}{|x-y|^\alpha}dy\right)k(\varepsilon x)e^{V(x)}.
 \end{align}

\section{Approximation of the solution}\

This section will be devoted to estimate the error term $\mathcal E$ introduced in \eqref{ELN}.\\

In order for $V$ to be a good approximation for a solution to \eqref{eq. change original problem} we need $\mathcal E$ to be small, in some sense which will be specified in a moment; in fact, we point out that if $V$ were a solution to \eqref{eq. change original problem}, $\mathcal E$ would identically vanish.\\
To this purpose, we introduce a norm, inspired by \cite{dkm}, which will be used to estimate the size of $\mathcal E$, and later on also $\mathcal L\phi$ and $\mathcal N(\phi)$. It is basically a weighted $L^\infty$-norm with a weight resembling the bubbles. We define:
\begin{equation}\label{norm}
\|h\|_\ast:=\sup_{x\in\Omega_\varepsilon}\frac{|h(x)|}{\sum_{j=1}^m\frac1{\left(1+\left|x-\xi'_j\right|\right)^{\min\{3,4-\alpha\}}}+\varepsilon^2},
\end{equation}
where
$$\xi'_j:=\frac{\xi_j}\varepsilon,\qquad\forall j=1,\dots,m.$$
We remark that, since $\int_{\Omega_\varepsilon}\left(\sum_{j=1}^m\frac1{\left(1+\left|x-\xi'_j\right|\right)^{\min\{3,4-\alpha\}}}+\varepsilon^2\right)dx\le C$, with $C$ independent on $\varepsilon$, then for any $f,g\in L^\infty(\Omega_\varepsilon)$ we get the estimate
$$\left|\int_{\Omega_\varepsilon}fg\right|\le C\|f\|_\ast\|g\|_\infty.$$\

The rest of this section will consist in showing that the $\|\cdot\|_\ast$-norm of $\mathcal E$ goes to $0$ as $\varepsilon$ goes to $0$. More precisely, we have the following:

\begin{proposition}\label{le. estimate of E}
There exists $C>0$ such that, for any $\varepsilon>0$, we have
$$\|\mathcal E\|_\ast\le C\varepsilon^{\min\{\alpha,1\}}.$$
\end{proposition}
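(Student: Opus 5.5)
We split $\Omega_\varepsilon$ into the balls $B_j:=B_{\tau/\varepsilon}(\xi'_j)$, $j=1,\dots,m$, which are disjoint by \eqref{dist}, and the complement $\Omega_\varepsilon\setminus\bigcup_jB_j$. On each region we compare $k(\varepsilon x)e^{V(x)}$ with $e^{w_j(x)}$, and the convolution $\int_{\Omega_\varepsilon}\frac{k(\varepsilon y)e^{V(y)}}{|x-y|^\alpha}dy$ with $\sum_j\int_{\Omega_\varepsilon}\frac{e^{w_j(y)}}{|x-y|^\alpha}dy$.

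\textbf{Pointwise expansion of $V$.} For $x\in B_j$, by \eqref{eq.estimate of Hj} and \eqref{eq.estimate of mu},
$$V(x)=w_j(x)-\log k(\xi_j)+H_j(\varepsilon x)+\sum_{k\ne j}\bigl(u_k+H_k\bigr)(\varepsilon x).$$
The terms $H_j(\varepsilon x)$ and $\frac{4-\alpha}4G(\varepsilon x,\xi_k)$ are smooth near $\xi_j$. Their value at $\xi_j$ is exactly cancelled by the choice \eqref{eq. definition of mu} of $\mu_j$, which gives
$$\frac{4-\alpha}4\Bigl(H(\xi_j,\xi_j)+\sum_{k\ne j}G(\xi_j,\xi_k)\Bigr)-\frac{4-\alpha}2\log(C_\alpha\mu_j)+\log k(\xi_j)=0.$$
Combining this with $k(\varepsilon x)=k(\xi_j)(1+O(\varepsilon|x-\xi'_j|))$, since $k\in C^1$, we obtain
$$k(\varepsilon x)e^{V(x)}=e^{w_j(x)}\bigl(1+O(\varepsilon|x-\xi'_j|)+O(\varepsilon^2)\bigr)\qquad\text{on }B_j .$$
Outside $\bigcup_jB_j$ we have $e^{V}=O(\varepsilon^{4-\alpha})$, while each $e^{w_j}=O(\varepsilon^{4-\alpha})$ there as well. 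Since $e^{w_j}\sim(1+|x-\xi'_j|)^{-(4-\alpha)}$, the density error on $B_j$ is bounded by $C\varepsilon(1+|x-\xi'_j|)^{-(3-\alpha)}$.

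\textbf{Convolution estimates.} We decompose
$$\mathcal E=\sum_j\bigl(I_\alpha\ast e^{w_j}\bigr)\bigl(e^{w_j}-ke^{V}\bigr)+\Bigl(\sum_jI_\alpha\ast e^{w_j}-I_\alpha\ast ke^{V}\Bigr)ke^V,$$
where the cross terms $(I_\alpha\ast e^{w_j})e^{w_k}$ with $j\ne k$ are absorbed into the second bracket. We use two bounds:
\begin{itemize}
\item $I_\alpha\ast e^{w_j}(x)\le C(1+|x-\xi'_j|)^{-\alpha}$, by splitting the integral into $|y-x|<|x-\xi'_j|/2$ and its complement;
\item $\bigl|I_\alpha\ast\bigl(ke^V-\sum_je^{w_j}\bigr)\bigr|(x)\le C\varepsilon\int\frac{(1+|y-\xi'_j|)^{-(3-\alpha)}}{|x-y|^\alpha}dy+\dots$, which is of order $\varepsilon\cdot(\tau/\varepsilon)^{\max\{0,\,2-(3-\alpha)-\alpha\}}=O(\varepsilon)$ up to a logarithm, with the tails handled by the $O(\varepsilon^{4-\alpha})$ bound.
\end{itemize}
Multiplying out, the first term is bounded by $C\varepsilon(1+|x-\xi'_j|)^{-\alpha-3+\alpha}=C\varepsilon(1+|x-\xi'_j|)^{-3}$, which fits the weight. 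The second term is bounded by $C\varepsilon|\log\varepsilon|\,e^{w_j}$.

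\textbf{Main obstacle: the cross interaction.} The main difficulty is the term $(I_\alpha\ast e^{w_k})(x)\,e^{w_j}(x)$ with $k\ne j$ and $x\in B_j$. Here $I_\alpha\ast e^{w_k}\approx|\xi'_j-\xi'_k|^{-\alpha}\sim\varepsilon^\alpha$, so this term is of order $\varepsilon^\alpha e^{w_j}$. It is genuinely present in the true nonlinearity of \eqref{eq. change original problem} but not in the sum of isolated bubbles $\sum_j(I_\alpha\ast e^{w_j})e^{w_j}$ appearing in $\mathcal E$.

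This is exactly why the bound is $\varepsilon^{\min\{\alpha,1\}}$ rather than $\varepsilon$. We bound this term directly by $C\varepsilon^\alpha(1+|x-\xi'_j|)^{-(4-\alpha)}$, which is controlled by the weight, since $4-\alpha\ge\min\{3,4-\alpha\}$.

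\textbf{Far region and boundary effects.} Away from all the points, every contribution is $O(\varepsilon^{4-\alpha})\cdot O(\varepsilon^{\alpha})$, which is at most $C\varepsilon^2$ up to constants; this is matched by the $+\varepsilon^2$ in the weight.

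Finally, the fact that the integrals are over $\Omega_\varepsilon$ instead of $\mathbb R^2$ only removes mass of order $\int_{|y|>\tau/\varepsilon}e^{w_j}=O(\varepsilon^{2-\alpha})$. When weighted against $|x-y|^{-\alpha}\lesssim\varepsilon^\alpha$, this gives $O(\varepsilon^2)$ relative errors, which is harmless. The logarithmic loss above should be avoidable by a sharper split near $\alpha=1$; otherwise it is absorbed when $\alpha<1$.

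Collecting all the bounds gives $\|\mathcal E\|_\ast\le C\varepsilon^{\min\{\alpha,1\}}$.
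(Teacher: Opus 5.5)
Your proof is correct. It reaches the bound through a different organization than the paper, and two points of bookkeeping need fixing.

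\textbf{Comparison with the paper.} The paper splits the $y$-integration into $B_1,\dots,B_m,B_0$. For $x,y\in B_j$ it expands the two-point density $k(\varepsilon x)e^{V(x)}k(\varepsilon y)e^{V(y)}$ in both variables at once and controls the resulting kernel with the three-region split \eqref{triang}. It then treats $x\in B_l$ ($l\ne j$) and $x\in B_0$ as separate cases. You instead use the exact identity
\[
\mathcal E=\sum_j\bigl(I_\alpha\ast e^{w_j}\bigr)\bigl(e^{w_j}-ke^{V}\bigr)+\Bigl(\sum_jI_\alpha\ast e^{w_j}-I_\alpha\ast ke^{V}\Bigr)ke^{V}.
\]
This separates two errors:
\begin{itemize}
\item a pointwise density error, multiplied by the explicit potential $I_\alpha\ast e^{w_j}\le C(1+|x-\xi'_j|)^{-\alpha}$;
\item a potential error, which needs only one uniform convolution bound.
\end{itemize}
Your route is shorter and avoids most of the case analysis. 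It also shows that the only $\varepsilon^\alpha$ contribution is one bubble's potential acting on another bubble's density, so $m=1$ gives $O(\varepsilon)$, as the paper uses in the proof of Theorem \ref{m=1}. The paper's two-variable form has the advantage of being reused in the expansion of $J_1$ in Proposition \ref{mainorder}.

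\textbf{Where the cross interaction sits.} It is not in the second bracket. That bracket is genuinely $O(\varepsilon)$, since $ke^V\approx\sum_le^{w_l}$ as densities. The cross term is the $j$-th summand of the first sum evaluated on $B_l$, $l\ne j$, where $e^{w_j}-ke^V\approx-e^{w_l}$. So your bound $C\varepsilon(1+|x-\xi'_j|)^{-3}$ for that summand holds only on $B_j$. On $B_l$ you need $I_\alpha\ast e^{w_j}\le C\varepsilon^\alpha$ together with $|e^{w_j}-ke^V|\le C(1+|x-\xi'_l|)^{-(4-\alpha)}$. This is exactly the estimate in your ``main obstacle'' paragraph, so the argument closes once it is placed correctly.

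\textbf{The logarithm.} There is no logarithmic loss, and you should not leave this hedged. For $\alpha\ge1$ a factor $|\log\varepsilon|$ would break the claimed bound $C\varepsilon$. The kernel $(1+|y-\xi'_j|)^{-(3-\alpha)}|x-y|^{-\alpha}$ has total decay $|y|^{-3}$. Splitting the $y$-integral according to whether $|y-x|<\frac12|x-\xi'_j|$ gives $\sup_x\int_{\mathbb R^2}\frac{(1+|y-\xi'_j|)^{-(3-\alpha)}}{|x-y|^\alpha}\,dy<\infty$ for every $\alpha\in(0,2)$. The $O(\varepsilon^{4-\alpha})$ pieces contribute at most $\varepsilon^{4-\alpha}\int_{\Omega_\varepsilon}|x-y|^{-\alpha}dy\le C\varepsilon^2$. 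Hence the second term is $O(\varepsilon)\sum_l(1+|x-\xi'_l|)^{-(4-\alpha)}$, with no logarithm.

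Your final paragraph on truncation is unnecessary. Both convolutions in $\mathcal E$ are already taken over $\Omega_\varepsilon$, and the bound on $I_\alpha\ast e^{w_j}$ over $\Omega_\varepsilon$ follows from the one over $\mathbb R^2$ by positivity.
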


\begin{proof}
Take $\tau$ as in \eqref{dist} and define the $m+1$ disjoint sub-domains of $\Omega_\varepsilon$:
\begin{equation}\label{bj}
B_j:=\left\{x\in\Omega_\varepsilon:\,\left|x-\xi'_j\right|<\frac\tau\varepsilon\right\},\quad j=1,\dots,m;\qquad B_0:=\Omega_\varepsilon\setminus\bigcup_{j=1}^mB_j.
\end{equation}
We observe that, for any $j\in\{1,\dots,m\}$ and $y\in B_j$, from \eqref{eq.estimate of Hj}, \eqref{eq.estimate of mu} and \eqref{eq. definition of mu} we get
\begin{align}
\nonumber&k(\varepsilon y)e^{V(y)}\\
\nonumber=&\frac{k\left(\xi_j+\varepsilon\left(y-\xi'_j\right)\right)}{k(\xi_j)}e^{w_j(y)+\frac{4-\alpha}4\left(H_j\left(\xi_j+\varepsilon\left(y-\xi'_j\right),\xi_j\right)+\sum_{k\ne j}G\left(\xi_j+\varepsilon\left(y-\xi'_j\right),\xi_k\right)-H_j(\xi_j,\xi_j)-\sum_{k\ne j}G(\xi_j,\xi_k)\right)}\\
\label{eq. in ball}=&e^{w_j(y)}\left(1+O\left(\varepsilon\left|y-\xi'_j\right|\right)\right).
\end{align}
On the other hand, we also get the global estimates:
\begin{equation}\label{eq. out ball}
e^{w_j(y)}=O\left(\frac1{\left(1+\left|y-\xi'_j\right|\right)^{4-\alpha}}\right),\,j=1,\dots,m;\quad k(\varepsilon y)e^{V(y)}=O\left(\sum_{j=1}^m\frac1{\left(1+\left|y-\xi'_j\right|\right)^{4-\alpha}}\right).
\end{equation}
Recalling the definition of $\mathcal E$ in \eqref{ELN}, we have
\begin{align*}
\mathcal E(x)=&\sum_{j=1}^m\underbrace{\sum_{k=1}^m\left(\int_{B_j}\frac{e^{w_k(y)}}{|x-y|^\alpha}dy\right)e^{w_k(x)}}_{=:I_{1,j}(x)}-\sum_{j=1}^m\underbrace{\left(\int_{B_j}\frac{k(\varepsilon y)e^{V(y)}}{|x-y|^\alpha}dy\right)k(\varepsilon x)e^{V(x)}}_{=:I_{2,j}(x)}\\
&+\underbrace{\sum_{k=1}^m\left(\int_{B_0}\frac{e^{w_k(y)}}{|x-y|^\alpha}dy\right)e^{w_k(x)}}_{:=I_{1,0}(x)}-\underbrace{\left(\int_{B_0}\frac{k(\varepsilon y)e^{V(y)}}{|x-y|^\alpha}dy\right)k(\varepsilon x)e^{V(x)}}_{=:I_{2,0}(x)}.\\
\end{align*}
If $x\in B_j$, then
$$I_{1,j}(x)=\left(\int_{B_j}\frac{e^{w_j(y)}}{|x-y|^\alpha}dy\right)e^{w_j(x)}+O\left(\varepsilon^{6-\alpha}\right)$$
and
$$I_{2,j}(x)=\left(\int_{B_j}\frac{e^{w_j(y)}\left(1+O\left(\varepsilon\left|y-\xi'_j\right|\right)\right)}{|x-y|^\alpha}dy\right)e^{w_j(x)}\left(1+O\left(\varepsilon\left|x-\xi'_j\right|\right)\right).$$
We may then estimate their difference using the following consequences of the triangular inequalities:
\begin{align}
\nonumber|x-y|\le\frac{1+\left|x-\xi'_j\right|}2\qquad&\Rightarrow\qquad1+\left|y-\xi'_j\right|\ge\frac{1+\left|x-\xi'_j\right|}2\\
\nonumber|x-y|\le2\left(1+\left|x-\xi'_j\right|\right)\qquad&\Rightarrow\qquad1+\left|y-\xi'_j\right|\le3\left(1+\left|x-\xi'_j\right|\right)\\
\label{triang}|x-y|\ge2\left(1+\left|x-\xi'_j\right|\right)\qquad&\Rightarrow\qquad|x-y|\ge\frac23\left(1+\left|y-\xi'_j\right|\right);
\end{align}
in fact,
\begin{align}
\nonumber&I_{1,j}(x)-I_{2,j}(x)\\
\nonumber=&O\left(\left(\int_{B_j}\frac{\varepsilon\left(\left|y-\xi'_j\right|+\left|x-\xi'_j\right|\right)}{|x-y|^\alpha}e^{w_j(y)}dy\right)e^{w_j(x)}+\varepsilon^{6-\alpha}\right)\\
\nonumber=&O\left(\varepsilon\left(\int_{B_j}\frac{\left|y-\xi'_j\right|+\left|x-\xi'_j\right|}{|x-y|^\alpha\left(1+\left|y-\xi'_j\right|\right)^{4-\alpha}}dy\right)\frac1{\left(1+\left|x-\xi'_j\right|\right)^{4-\alpha}}+\varepsilon^{6-\alpha}\right)\\
\nonumber=&O\left(\varepsilon\left(\int_{\left\{|x-y|\le\frac{1+\left|x-\xi'_j\right|}2\right\}}\frac1{|x-y|^\alpha}dy\right)\frac1{\left(1+\left|x-\xi'_j\right|\right)^{7-2\alpha}}\right.\\
\nonumber&\left.+\varepsilon\left(\int_{\left\{\frac{1+\left|x-\xi'_j\right|}2<|x-y|\le2\left(1+\left|x-\xi'_j\right|\right)\right\}}\frac1{\left(1+\left|y-\xi'_j\right|\right)^{4-\alpha}}dy\right)\frac1{\left(1+\left|x-\xi'_j\right|\right)^3}\right.\\
\nonumber&\left.+\varepsilon\left(\int_{\left\{|x-y|>2\left(1+\left|x-\xi'_j\right|\right)\right\}}\frac1{\left(1+\left|y-\xi'_j\right|\right)^4}dy\right)\frac1{\left(1+\left|x-\xi'_j\right|\right)^{4-\alpha}}+\varepsilon^{6-\alpha}\right)\\
\nonumber=&O\left(\varepsilon\left(\frac1{\left(1+\left|x-\xi'_j\right|\right)^{5-\alpha}}+\frac1{\left(1+\left|x-\xi'_j\right|\right)^3}+\frac\varepsilon{\left(1+\left|x-\xi'_j\right|\right)^{4-\alpha}}+\varepsilon^{5-\alpha}\right)\right)\\
\label{i1j-i2j}=&O\left(\frac{\varepsilon}{\left(1+\left|x-\xi'_j\right|\right)^3}\right).
\end{align}
If $x\in B_l$ for some $l\ne j$, then $|x-y|\ge\frac\tau\varepsilon$ for any $y\in B_j$, hence from \eqref{eq. out ball} we get:
\begin{align}
\nonumber I_{1,j}(x)=&O\left(\int_{B_j}\varepsilon^\alpha\frac1{\left(1+\left|y-\xi'_j\right|\right)^{4-\alpha}}dy\frac1{\left(1+\left|x-\xi'_j\right|\right)^{4-\alpha}}+\int_{B_j}\varepsilon^4dy\sum_{k\ne j}\frac1{\left(1+\left|x-\xi'_k\right|\right)^{4-\alpha}}\right)\\
\label{knotj1}=&O\left(\varepsilon^4+\sum_{k\ne j}^m\frac{\varepsilon^2}{\left(1+\left|x-\xi'_k\right|\right)^{4-\alpha}}\right),
\end{align}
and
\begin{align}
\nonumber I_{2,j}(x)=&O\left(\int_{B_j}\frac1{|x-y|^\alpha}\sum_{j=1}^m\frac1{\left(1+\left|y-\xi'_j\right|\right)^{4-\alpha}}dy\sum_{k\ne j}\frac1{\left(1+\left|x-\xi'_k\right|\right)^{4-\alpha}}\right)\\
\nonumber=&O\left(\varepsilon^\alpha\int_{B_j}\frac1{\left(1+\left|y-\xi'_j\right|\right)^{4-\alpha}}dy\sum_{k\ne j}\frac1{\left(1+\left|x-\xi'_k\right|\right)^{4-\alpha}}\right)\\
\label{knotj2}=&O\left(\sum_{k\ne j}\frac{\varepsilon^\alpha}{\left(1+\left|x-\xi'_k\right|\right)^{4-\alpha}}\right)
\end{align}
On the other hand, if $x\in B_0$, then from \eqref{eq. out ball} we have
\begin{align*}
I_{1,j}(x)=&O\left(\sum_{k=1}^m\int_{B_j}\frac1{|x-y|^\alpha}\frac1{\left(1+\left|y-\xi'_k\right|\right)^{4-\alpha}}dy\frac1{\left(1+\left|x-\xi'_k\right|\right)^{4-\alpha}}\right)\\
=&O\left(\int_{|y-x|<\frac{1+\left|x-\xi'_j\right|}2}\frac1{|x-y|^\alpha}\left(\frac2{1+\left|x-\xi'_j\right|}\right)^{4-\alpha}dy\frac1{\left(1+\left|x-\xi'_k\right|\right)^{4-\alpha}}\right.\\
\nonumber&\left.+\int_{|y-x|\ge\frac{1+\left|x-\xi'_j\right|}2}\left(\frac2{1+\left|x-\xi'_j\right|}\right)^\alpha\frac1{\left(1+\left|y-\xi'_k\right|\right)^{4-\alpha}}dy\frac1{\left(1+\left|x-\xi'_k\right|\right)^{4-\alpha}}\right)\\
=&O\left(\frac1{\left(1+\left|x-\xi'_k\right|\right)^4}\right)\\
=&O\left(\varepsilon^4\right),
\end{align*}
and, similarly,
\begin{align*}
I_{2,j}(x)=&O\left(\int_{B_j}\frac1{|x-y|^\alpha}\int_{B_j}\frac1{|x-y|^\alpha}\frac1{\left(1+\left|y-\xi'_k\right|\right)^{4-\alpha}}dy\sum_{l=1}^m\frac1{\left(1+\left|x-\xi'_l\right|\right)^{4-\alpha}}\right)\\
=&O\left(\int_{B_j}\frac1{|x-y|^\alpha}\int_{B_j}\frac1{|x-y|^\alpha}\frac1{\left(1+\left|y-\xi'_k\right|\right)^{4-\alpha}}dy\frac1{\left(1+\left|x-\xi'_k\right|\right)^{4-\alpha}}\right)\\
=&O\left(\varepsilon^4\right);
\end{align*}
therefore,
$$I_{1,j}(x)-I_{2,j}(x)=O\left(\sum_{k=1}^m\frac{\varepsilon^\alpha}{\left(1+\left|x-\xi'_k\right|\right)^{4-\alpha}}\right).$$
Finally, again from \eqref{eq. out ball} we get
\begin{align*}
I_{1,0}(x)-I_{2,0}(x)=&O\left(\varepsilon^{4-\alpha}\int_{B_0}\frac1{|x-y|^\alpha}dy\sum_{j=1}^m\frac1{\left(1+\left|x-\xi'_j\right|\right)^{4-\alpha}}\right)\\
=&O\left(\varepsilon^{4-\alpha}\sum_{j=1}^m\frac{|x|^{2-\alpha}}{\left(1+\left|x-\xi'_j\right|\right)^{4-\alpha}}\right)\\
=&O\left(\sum_{j=1}^m\frac{\varepsilon^2}{\left(1+\left|x-\xi'_j\right|\right)^{4-\alpha}}\right).
\end{align*}
To sum up,$$
|\mathcal E(x)|=O\left(\sum_{j=1}^m\frac{\varepsilon^{\min\{\alpha,1\}}}{\left(1+\left|x-\xi'_j\right|\right)^{\min\{3,4-\alpha\}}}\right),$$
that is, from the definition \eqref{norm}, $\|\mathcal E\|_\ast=O\left(\varepsilon^{\min\{\alpha,1\}}\right)$.
\end{proof}\

\section{The linearized operator}\

This section is devoted to the study of the linearized operator $\mathcal L\phi$.\\

The starting point of this study is the \emph{limiting linearized operator} around the bubble $w_{1,0}$, defined as:
$$\mathcal L_0\phi:=\Delta\psi(x)+\left(\int_{\mathbb R^2}\frac{e^{w_{1,0}(y)}\phi(y)}{|x-y|^\alpha}dy\right)e^{w_{1,0}(x)}+\left(\int_{\mathbb R^2}\frac{e^{w_{1,0}(y)}dy}{|x-y|^\alpha}\right)e^{w_{1,0}(x)}\phi(x)=0.$$
Bounded solutions to $\mathcal L_0\phi=0$ in $\mathbb R^2$ have been recently classified in \cite{gaol}, where the linearized operator is shown to be non-degenerate. In other words, solutions are linear combinations of the three generators $w_0,w_1,w_2$ defined by:
\begin{align*}
w_0(x):=&\partial_\mu w_{\mu,\xi}(x)|_{\mu=1,\xi=0}=\frac{4-\alpha}2\frac{1-|x|^2}{1+|x|^2};\\
w_i(x):=&\left.\partial_{\xi_i}w_{\mu,\xi}(x)\right|_{\mu=1,\xi=0}=(4-\alpha)\frac{x_i}{1+|x|^2},\quad i=1,2.
\end{align*}
Therefore, since we are working with dilated and translated versions of the standard bubbles, we will consider
\begin{equation}\label{zij}
z_{i,j}(x):=w_i\left(\frac{x-\xi'_j}{\mu_j}\right),\qquad i=0,1,2,j=1,\dots,m.
\end{equation}\

However, it does not seem convenient to consider the full kernel generated by the elements $z_{i,j}$.\\
In fact, imposing orthogonality with respect to all of them is a $3m$-dimensional constraint; this seems not possible to obtain, since the scaling parameters $\mu_j$ are fixed by \eqref{eq. definition of mu} hence the problem has only $2m$ free parameters given by the centers of the bubbles $\xi_1,\dots,\xi_m\in\Omega\subset\mathbb R^2$. For this reason, we will rather discard the elements $z_{0,j}$, related to invariance by dilation, and just consider orthogonality with respect with $z_{i,j}$ with $i=1,2$.\\
In this case, we will not get uniform estimates independently on $\varepsilon$, unless when we include also $i=0$, but the constant will grow logarithmically in $\varepsilon$. However, this will not cause any issues, since such estimates need to be combined with the ones from the previous section, as well as with others from the following sections, which decay polynomially in $\varepsilon$.\\

We will also need some cut-off functions centered at the points $\xi'_j$, to prevent issues for interactions between the elements $z_{i,j},z_{k,l}$ with $j\ne l$.\\
Let $R_0>0$ be large, to be fixed later, and $\chi:[0,+\infty)\to[0,+\infty)$ a smooth non-negative function such that 
$$\left\{\begin{array}{ll}\chi(t)\equiv1&\mbox{if }t<R_0,\\\chi(t)\equiv0&\mbox{if }t>R_0+1,\\0\le\chi\le1\end{array}.\right.$$
Set
$$\chi_j(x)=\chi\left(\left|x-\xi'_j\right|\right),\qquad j=1,\dots,m.$$\

The main result of this section is the following.

\begin{proposition}\label{prop1}
There exist $C>0$ such that, for all $\varepsilon>0$ the problem
\begin{equation}\label{eq. unique solu}
\left\{\begin{array}{ll}
\mathcal L\phi=h+\sum_{i=1}^2\sum_{j=1}^mc_{i,j}\chi_jz_{i,j}&\text{in }\Omega_\varepsilon,\\
\phi=0,&\text{on }\partial\Omega_\varepsilon,\\
\int_{\Omega_\varepsilon}\chi_jz_{i,j}\phi=0,&i=1,2,\,j=1,\dots,m.
\end{array}\right.
\end{equation}
admits a unique solution, satisfying additionally
\begin{equation}\label{eq. prop1 7}
\|\phi\|_\infty\le C\log\left(\frac1\varepsilon\right)\|h\|_\ast.
\end{equation}
\end{proposition}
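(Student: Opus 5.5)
We follow the scheme of \cite{dkm,esp}, adapted to the nonlocal operator. The core of the argument is an a priori estimate for solutions of \eqref{eq. unique solu} in the case where orthogonality is imposed with respect to \emph{all} the $z_{i,j}$, $i=0,1,2$. This is then upgraded to the stated estimate with a $\log\frac1\varepsilon$ loss, which comes from the missing dilation directions $z_{0,j}$. Existence and uniqueness follow from Fredholm theory once the a priori bound is available.

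\emph{Step 1: an a priori estimate with full orthogonality.} We would show that if $\phi$ solves $\mathcal L\phi=h$ in $\Omega_\varepsilon$, $\phi=0$ on $\partial\Omega_\varepsilon$, and $\int\chi_jz_{i,j}\phi=0$ for $i=0,1,2$, then $\|\phi\|_\infty\le C\|h\|_\ast$ uniformly in $\varepsilon$.

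First we build a barrier (positive supersolution) $\psi$ for $\mathcal L$ outside $\bigcup_j B_{R_1}(\xi'_j)$. A natural choice is
$$\psi=\sum_j\left(1-|x-\xi'_j|^{-\sigma}\right)$$
with small $\sigma>0$, corrected by a bounded harmonic function to handle the $\varepsilon^2$ part of the weight in \eqref{norm}. The point is that both potential terms in $\mathcal L$ decay like $|x-\xi'_j|^{-(4-\alpha)}$, by \eqref{eq. out ball}. Hence $\Delta\psi\sim-\sigma^2|x-\xi'_j|^{-\sigma-2}$ dominates them for $R_1$ large. We also need the nonlocal term $\int k e^V\psi/|x-y|^\alpha$ to be bounded by a multiple of $|x-\xi'_j|^{-(4-\alpha)}$. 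This holds since $\psi$ is bounded.

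The barrier yields a maximum-principle bound
$$\|\phi\|_\infty\le C\left(\|\phi\|_{L^\infty(\cup_jB_{R_1}(\xi'_j))}+\|h\|_\ast\right).$$
The nonlocal term in the exterior region is treated as a source, controlled by $\|\phi\|_\infty\varepsilon^\alpha$ plus the inner norm.

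Then we argue by contradiction. Take $\varepsilon_n\to0$ with $\|\phi_n\|_\infty=1$ and $\|h_n\|_\ast\to0$. Translating by $\xi'_j$, elliptic estimates give $\phi_n\to\phi_\infty$ locally uniformly, where $\phi_\infty$ is bounded and solves $\mathcal L_0\phi_\infty=0$ after rescaling by $\mu_j$. Here we use \eqref{eq. in ball} and dominated convergence in the Riesz integral, which the decay $e^{w_j}\sim|y|^{-(4-\alpha)}$ makes integrable.

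By the nondegeneracy result of \cite{gaol}, $\phi_\infty$ is a combination of $w_0,w_1,w_2$. The orthogonality conditions pass to the limit, so $\phi_\infty=0$. Therefore the inner norm tends to $0$, and the barrier bound gives $\|\phi_n\|_\infty\to0$, a contradiction.

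\emph{Step 2: removing orthogonality to $z_{0,j}$.} This is the main obstacle. Following \cite{dkm}, we would modify $z_{0,j}$ into functions $\hat z_{0,j}$ that vanish on $\partial\Omega_\varepsilon$. They are built by gluing $z_{0,j}+\frac{4-\alpha}2$, multiplied by a suitable logarithmic factor, to $0$ in annuli of radii $R$ and $\frac\tau\varepsilon$, following the construction in \cite{dkm}. We then write
$$\tilde\phi=\phi+\sum_jd_j\hat z_{0,j}$$
so that $\tilde\phi$ has full orthogonality, and apply Step 1.

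The key estimate is that $|d_j|\le C\log\frac1\varepsilon\|h\|_\ast$. We would obtain it by testing the equation against $\hat z_{0,j}$ and integrating by parts. The self-adjointness of the nonlocal term (the kernel is symmetric) allows moving $\mathcal L$ onto $\hat z_{0,j}$. One must then compute $\mathcal L\hat z_{0,j}$. In the annuli, the logarithmic gluing produces the term $\int\chi_j\ldots$ of size $1/\log\frac1\varepsilon$ that determines $d_j$. The nonlocal tails of $e^{w_j}$ also interact with $\hat z_{0,j}$, and we must check these contribute only lower-order errors; this is where care with the Riesz potential is needed.

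The coefficients $c_{i,j}$ are handled in the same way. Testing against $\chi_jz_{i,j}$ for $i=1,2$ gives $|c_{i,j}|\le C(\|h\|_\ast+o(1)\|\phi\|_\infty)$, using that $\mathcal L(\chi_jz_{i,j})$ is small in $\|\cdot\|_\ast$ up to errors from the cut-off and from $k$.

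\emph{Step 3: existence and uniqueness.} We work in the closed subspace $\mathcal H=\{\phi\in H^1_0(\Omega_\varepsilon):\int\chi_jz_{i,j}\phi=0,\ i=1,2\}$. There the problem can be written as $\phi+K\phi=\tilde h$, where $K$ is compact, since it is given by $\Delta^{-1}$ applied to the bounded potential terms. By the a priori estimate the homogeneous equation has only the trivial solution, so the Fredholm alternative gives existence and uniqueness, together with \eqref{eq. prop1 7}.
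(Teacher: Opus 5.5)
\textbf{Main gap: the bound on $c_{i,j}$.} Your architecture matches the paper's: a barrier outside fixed discs, a uniform bound under full orthogonality by blow-up and the nondegeneracy of \cite{gaol}, a logarithmically modified dilation mode costing $\log\frac1\varepsilon$, then bounds on $d_j$ and $c_{i,j}$, then Fredholm. However, the step bounding $c_{i,j}$ fails as you set it up. You test against $\chi_jz_{i,j}$, where $\chi_j$ cuts off at the \emph{fixed} radius $R_0$. Then
\[
\mathcal L(\chi_jz_{i,j})=\chi_j\mathcal Lz_{i,j}+z_{i,j}\Delta\chi_j+2\nabla\chi_j\cdot\nabla z_{i,j}+k(\varepsilon x)e^{V(x)}\int_{\Omega_\varepsilon}\frac{k(\varepsilon y)e^{V(y)}(\chi_j(y)-\chi_j(x))z_{i,j}(y)}{|x-y|^\alpha}dy ,
\]
and the last three terms are not small in $\varepsilon$. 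For instance $z_{i,j}\Delta\chi_j\sim R_0^{-1}$ on an annulus of area $\sim R_0$, so $\int\phi\,\mathcal L(\chi_jz_{i,j})$ is only $O(\|\phi\|_\infty)$, and this is not even small for large $R_0$. You therefore get only $|c_{i,j}|\le C(\|h\|_\ast+\|\phi\|_\infty)$. That cannot be closed against $\|\phi\|_\infty\le C\log\frac1\varepsilon\big(\|h\|_\ast+\sum|c_{i,j}|\big)$. Even an $o(1)$ coefficient would not suffice: because of the logarithmic loss you need $o\big(1/\log\frac1\varepsilon\big)$.

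The paper instead tests against $\eta_{2,j}z_{i,j}$, with $\eta_{2,j}$ the cut-off at scale $\tau/\varepsilon$ from \eqref{eta12}. Since $\eta_{2,j}\equiv1$ on $\mathrm{supp}\,\chi_j$, the right-hand side still isolates $c_{i,j}\int\chi_jz_{i,j}^2$. Meanwhile $|\nabla\eta_{2,j}|\le C\varepsilon$, $|\Delta\eta_{2,j}|\le C\varepsilon^2$, and $z_{i,j}=O(\varepsilon)$ where these derivatives live. Using $\mathcal L_jz_{i,j}=0$ and self-adjointness this gives $\left|\int\phi\,\mathcal L(\eta_{2,j}z_{i,j})\right|\le C\varepsilon^{\min\{\alpha,1\}}\|\phi\|_\infty$. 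The estimate then closes because $\varepsilon^{\min\{\alpha,1\}}\log\frac1\varepsilon\to0$.

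\textbf{Smaller corrections.} In Step 1, a bounded \emph{harmonic} correction cannot absorb the $\varepsilon^2$ part of the weight in \eqref{norm}. You need a bounded supersolution with $-\Delta\ge\varepsilon^2$, for example $\frac{\varepsilon^2}4\big((D/\varepsilon)^2-|x-\xi'_j|^2\big)$ with $D=2\,\mathrm{diam}\,\Omega$, as in the paper's $\psi_j$. Also, the exterior nonlocal term, treated as a source, is small by a factor $R_1^{-\min\{1,\alpha\}}$, not $\varepsilon^\alpha$; that still suffices to absorb it.

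In Step 2, as you describe it, the modified mode is built from $z_{0,j}+\frac{4-\alpha}2$. It must instead agree with $z_{0,j}$ itself near $\xi'_j$: constants are not annihilated by $\mathcal L_j$, so otherwise $\mathcal L\widetilde z_{0,j}$ would be of order one in the core. The logarithm can enter additively, as in \cite{dkm}, or multiplicatively, as in the paper's $\widetilde z_{0,j}=\eta_{1,j}z_{0,j}+(1-\eta_{1,j})\eta_{2,j}h_jz_{0,j}$.

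Finally, bounding $d_j$ needs more than $\|\mathcal L\widetilde z_{0,j}\|_\ast\le C/\log\frac1\varepsilon$. It also needs the signed bound $\int\widetilde z_{0,j}\mathcal L\widetilde z_{0,j}\le-c/\log\frac1\varepsilon$ with $c>0$. The paper gets this from the gluing annulus, after taking $R$ large enough that it dominates the contribution of $\{R+1\le|x-\xi'_j|\le\tau/(4\varepsilon)\}$.
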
\

In order to prove Proposition \ref{prop1}, we need some auxiliary lemmas.\\
We start with the existence of a barrier function for the operator $\mathcal L$ outside some fixed disks centered at the concentration points.

\begin{lemma}\label{le.maximum}
There exist $R_1>0$ and $Z\in C^2\left(\Omega_\varepsilon\right)$ satisfying
$$\left\{\begin{array}{ll}\mathcal LZ(x)<0&\forall x\in\widetilde\Omega_\varepsilon,\\Z(x)>\frac12&\forall x\in\Omega_\varepsilon,\end{array}\right.$$
where
$$\widetilde\Omega_\varepsilon:=\left\{x\in\Omega_\varepsilon:\,\left|x-\xi'_j\right|\ge R_1,\,\forall j=1,\dots,m\right\}.$$
As a consequence, $\mathcal L$ satisfies the maximum principle in $\widetilde\Omega_\varepsilon$, namely
$$\left\{\begin{array}{ll}\mathcal L\phi(x)<0&\forall x\in\widetilde\Omega_\varepsilon,\\\phi(x)>0&\forall x\in\partial\widetilde\Omega_\varepsilon,\end{array}\right.\qquad\Rightarrow\qquad\phi(x)>0\quad\forall x\in\widetilde\Omega_\varepsilon.$$
\end{lemma}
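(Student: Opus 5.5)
Following the classical barrier constructions for the Liouville equation (as in \cite{dkm,esp}), we take
$$Z(x):=1-\sum_{j=1}^m\frac{\beta}{\left(1+\left|x-\xi'_j\right|\right)^{\gamma}}$$
with a small exponent $\gamma\in(0,1)$ and a small constant $\beta>0$, chosen so that $m\beta<\frac12$. This immediately gives $\frac12<Z\le1$ on $\Omega_\varepsilon$. A polynomially decaying radial correction works because the potential terms of $\mathcal L$ decay at least like $|x-\xi'_j|^{-(4-\alpha)}$, faster than $|x|^{-2}$. The plan is to show that $\Delta Z$ contributes a negative term of size comparable to $\sum_j\left(1+|x-\xi'_j|\right)^{-\gamma-2}$, and that this dominates the two positive zero-order terms of $\mathcal L Z$ once $|x-\xi'_j|\ge R_1$ with $R_1$ large.

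\textbf{Laplacian.} For $r=|x-\xi'_j|$ and $f(r)=(1+r)^{-\gamma}$ we have
$$\Delta f=f''+\frac{f'}r=\gamma(1+r)^{-\gamma-2}\left((\gamma+1)-\frac{1+r}r\right).$$
For $r\ge1$ this is at most $-\frac{\gamma(1-\gamma)}{2}(1+r)^{-\gamma-2}$, since $\gamma<1$ and $\frac{1+r}r\to1$. Hence
$$\Delta Z\le-c\sum_j\left(1+\left|x-\xi'_j\right|\right)^{-\gamma-2}\quad\text{on }\widetilde\Omega_\varepsilon,$$
where $c=\frac{\beta\gamma(1-\gamma)}2$.

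\textbf{Zero-order terms.} By \eqref{eq. out ball}, $k(\varepsilon y)e^{V(y)}=O\left(\sum_l(1+|y-\xi'_l|)^{-(4-\alpha)}\right)$. The splitting of the $y$-integral used in the proof of Proposition \ref{le. estimate of E} (with \eqref{triang}) then shows that $\int_{\Omega_\varepsilon}\frac{k(\varepsilon y)e^{V(y)}}{|x-y|^\alpha}dy$ is bounded uniformly in $x$ and $\varepsilon$. The same bound holds with $Z(y)$ inside the integral, since $0<Z\le1$. Therefore both potential terms in $\mathcal LZ$ are
$$O\left(\sum_j\left(1+\left|x-\xi'_j\right|\right)^{-(4-\alpha)}\right).$$
Because $4-\alpha>2>\gamma+2$, on $\widetilde\Omega_\varepsilon$ we get $\mathcal LZ\le\sum_j(1+|x-\xi'_j|)^{-\gamma-2}\left(-c+CR_1^{-(2-\alpha-\gamma)}\right)$ for each $j$-term. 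Points near $\xi'_l$ but far from $\xi'_j$ are handled by the $l$-term of $\Delta Z$, because the positive terms are controlled by the sum over all centers. Choosing $\gamma<2-\alpha$ and then $R_1$ large makes this negative.

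\textbf{Main obstacle.} The delicate point is the term with small $\alpha$ close to $2$: $\gamma$ must be taken in $(0,\min\{1,2-\alpha\})$, which is possible since $\alpha<2$. The other potential problem is uniformity of the convolution bound over $\Omega_\varepsilon$. Both are handled by the same estimates already used for $\mathcal E$.

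\textbf{Maximum principle.} Given $\phi$ as in the statement, set $\psi=\phi/Z$ and suppose $\min\psi<0$. This minimum is attained at an interior point $x_0$, since $\psi>0$ on the boundary; take $\psi(x_0)=\min\psi=-t$ with $t>0$. Consider $\phi+tZ=Z(\psi+t)$, which is $\ge0$ and vanishes at $x_0$. It is positive on $\partial\widetilde\Omega_\varepsilon$, and it satisfies $\mathcal L(\phi+tZ)<0$.

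At $x_0$ we have $\Delta(\phi+tZ)\ge0$ and $(\phi+tZ)(x_0)=0$. The nonlocal term $\int\frac{ke^V(\phi+tZ)}{|x-y|^\alpha}$ is integrated over the whole of $\Omega_\varepsilon$, while we only know $\phi+tZ\ge0$ on $\widetilde\Omega_\varepsilon$. We therefore state the maximum principle for $\phi\ge0$ outside $\widetilde\Omega_\varepsilon$ as well, or for $\phi$ defined with nonnegative values on the balls; this is the form actually used later in Proposition \ref{prop1}. With that, the nonlocal term is $\ge0$, so $\mathcal L(\phi+tZ)(x_0)\ge0$, a contradiction. Hence $\phi>0$.
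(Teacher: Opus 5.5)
Your barrier is different from the paper's and is the right idea, but the Laplacian step is wrong as written. Since $\frac{1+r}{r}=1+\frac1r$, your own formula reads $\Delta f=\gamma(1+r)^{-\gamma-2}\bigl(\gamma-\frac1r\bigr)$.

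\begin{itemize}
\item This is negative for $r<\frac1\gamma$ and \emph{positive} for $r>\frac1\gamma$. So the claimed bound $\Delta f\le-\frac{\gamma(1-\gamma)}{2}(1+r)^{-\gamma-2}$ for $r\ge1$ is false.
\item Even if that bound held, it would give $\Delta Z=-\beta\sum_j\Delta f_j>0$, which is the opposite of the inequality you write next.
\item The two sign slips cancel. For $r\ge\frac2\gamma$ one has $\Delta f\ge\frac{\gamma^2}{2}(1+r)^{-\gamma-2}$; in essence this is $\Delta r^{-\gamma}=\gamma^2r^{-\gamma-2}$ in the plane.
\item Hence $\Delta Z\le-\frac{\beta\gamma^2}{2}\sum_j(1+|x-\xi'_j|)^{-\gamma-2}$ on $\widetilde\Omega_\varepsilon$ once $R_1\ge\frac2\gamma$.
\end{itemize}

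With $c=\frac{\beta\gamma^2}{2}$ the rest of your argument goes through. Your bound on the potential terms only uses $0<Z\le1$, so its constant does not depend on $\beta,\gamma,R_1$. Fix $\gamma\in(0,2-\alpha)$, then $\beta<\frac1{2m}$, then take $R_1$ large.

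Some smaller corrections:
\begin{itemize}
\item The restriction $\gamma<1$ is unnecessary.
\item With the sharper bound $C\sum_j(1+|x-\xi'_j|)^{-4}$ from \eqref{integralestimate}, any $\gamma\in(0,2)$ works.
\item ``$4-\alpha>2>\gamma+2$'' should read $4-\alpha>\gamma+2$.
\item To have $Z\in C^2(\Omega_\varepsilon)$ literally, use $(1+|x-\xi'_j|^2)^{-\gamma/2}$ instead of $(1+|x-\xi'_j|)^{-\gamma}$.
\end{itemize}

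The paper instead takes $Z=1-a\sum_j(1+|x-\xi'_j|^2)^{-1}$. Its Laplacian is $\approx-4a|x-\xi'_j|^{-4}$, which has exactly the decay of the positive potential terms. Those terms are bounded above by $C\sum_j(1+|x-\xi'_j|)^{-4}$ in \eqref{integralestimate}. They are also comparable from below: by \eqref{eq. in ball}, in $B_j$ the first one is close to $-\Delta w_j\,Z=\frac{2(4-\alpha)\mu_j^2}{(\mu_j^2+|x-\xi'_j|^2)^2}Z$, with a constant independent of $a$. Since $Z>\frac12$ forces $a<\frac1{2m}$, neither shrinking $a$ nor enlarging $R_1$ helps, so the paper's last step does not close as written.

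Your slower decay $|x-\xi'_j|^{-\gamma}$ with $\gamma<2$ is what repairs this. The Laplacian then decays strictly slower than the potential, so a large $R_1$ wins while $\beta$ stays small enough to keep $Z>\frac12$ on all of $\Omega_\varepsilon$. Beating the potential with a large coefficient, as one does for the local Liouville equation, is not available here. That would make $Z$ negative near the peaks, where the nonlocal term sees it.

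Your caveat on the maximum principle is also correct, and more careful than the paper's appeal to ``standard arguments''. Because $\mathcal L$ sees $\phi$ on $\Omega_\varepsilon\setminus\widetilde\Omega_\varepsilon$, the implication as stated is false. Take $g$ positive on $\partial\widetilde\Omega_\varepsilon$ but negative somewhere inside. Subtract a large multiple of a nonnegative bump compactly supported in one of the balls $\{|x-\xi'_j|<R_1\}$. The nonlocal term then makes $\mathcal L$ negative on $\widetilde\Omega_\varepsilon$ without changing $g$ there. So the hypothesis $\phi\ge0$ on the balls must be added.

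That amended version is what is actually needed in Lemma \ref{innernorm}, rather than directly in Proposition \ref{prop1}. There $\phi_0-\phi\ge2\|\phi\|_\imath Z-\|\phi\|_\imath\ge0$ on the balls, using $\psi_j\ge0$ and again $Z>\frac12$ on all of $\Omega_\varepsilon$.

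Finally, ruling out $\min\psi<0$ only gives $\phi\ge0$. For strict positivity, rerun the same argument with $t=0$ at an interior zero of $\phi$.
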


\begin{proof}
We suffice to prove the existence of such a barrier $Z$, since the maximum principle will then follow after standard arguments.\\
Define
$$Z(x)=1-a\sum_{j=1}^m\frac1{1+\left|x-\xi'_j\right|^2},$$
for some $a>0$ to be chosen later.\\
By definition, one has $Z(x)<1$ and, if $a<\frac1{2m}$, $Z(x)>\frac12$ for any $x\in\Omega_\varepsilon$.\\
By direct computations we have
$$\Delta Z=4a\sum_{j=1}^m\frac{1-\left|x-\xi'_j\right|^2}{\left(1+\left|x-\xi'_j\right|^2\right)^3},$$
therefore, if $|x-\xi'_j|>R_1\ge2$ we get
$$\Delta Z<-\frac{12}5a\sum_{j=1}^m\frac1{\left(1+\left|x-\xi'_j\right|^2\right)^2}\le-\frac{48}5a\sum_{j=1}^m\frac1{\left(1+\left|x-\xi'_j\right|\right)^4}.$$
Moreover, from \eqref{eq. out ball} and $0<Z(x)<1$, we get
\begin{align}
\nonumber&\left(\int_{\Omega_\varepsilon}\frac{k(\varepsilon y)e^{V(y)}}{|x-y|^\alpha}dy\right)k(\varepsilon x)e^{V(x)}Z(x)+\left(\int_{\Omega_\varepsilon}\frac{k(\varepsilon y)e^{V(y)}Z(y)}{|x-y|^\alpha}dy\right)k(\varepsilon x)e^{V(x)}\\
\nonumber\le&C\sum_{j=1}^m\int_{\left\{\left|y-\xi'_j\right|\le\frac C\varepsilon\right\}}\frac1{|x-y|^\alpha}\frac1{\left(1+\left|y-\xi'_j\right|\right)^{4-\alpha}}dy\sum_{k=1}^m\frac1{\left(1+\left|x-\xi'_k\right|\right)^{4-\alpha}}\\
\nonumber\le&C\sum_{j=1}^m\int_{\left\{\left|y-x\right|\le\frac{1+\left|x-\xi'_j\right|}2\right\}}\frac1{|x-y|^\alpha}\left(\frac2{1+\left|x-\xi'_j\right|}\right)^{4-\alpha}dy\sum_{k=1}^m\frac1{\left(1+\left|x-\xi'_k\right|\right)^{4-\alpha}}\\
\nonumber&+C\sum_{j=1}^m\int_{\left\{\left|y-\xi'_j\right|\le\frac C\varepsilon\right\}}\left(\frac2{1+\left|x-\xi'_j\right|}\right)^\alpha\frac1{\left(1+\left|y-\xi'_j\right|\right)^{4-\alpha}}dy\sum_{k=1}^m\frac1{\left(1+\left|x-\xi'_k\right|\right)^{4-\alpha}}\\
\nonumber\le&C\sum_{j=1}^m\frac1{\left(1+\left|x-\xi'_j\right|\right)^2}\sum_{k=1}^m\frac1{\left(1+\left|x-\xi'_k\right|\right)^{4-\alpha}}+C\sum_{j=1}^m\frac1{\left(1+\left|x-\xi'_j\right|\right)^\alpha}\sum_{k=1}^m\frac1{\left(1+\left|x-\xi'_k\right|\right)^{4-\alpha}}\\
\label{integralestimate}\le&C\sum_{j=1}^m\frac1{\left(1+\left|x-\xi'_j\right|\right)^4}.
\end{align}
Hence, if $a$ if sufficiently small and $R_1>2$, we get $\mathcal LZ(x)<0$ for any $x\in\widetilde\Omega_\varepsilon$.
\end{proof}\

The existence of such barrier functions also provides a crucial estimate: roughly speaking, we have uniform a-priori estimates far away from the points $\xi'_j$, hence we suffice to focus on some fixed balls centered at those points.

\begin{lemma}\label{innernorm}
There exists $C>0$ such that, if $\phi,h$ solve
$$\left\{\begin{array}{ll}
\nonumber\mathcal L\phi=h&\text{in }\Omega_\varepsilon,\\
\phi=0,&\text{on }\partial\Omega_\varepsilon,
\end{array}\right.$$
then
$$\|\phi\|_\infty\le C(\|\phi\|_\imath+\|h\|_\ast),$$
where
$$\|\phi\|_\imath:=\sup_{\Omega_\varepsilon\setminus\widetilde\Omega_\varepsilon}|\phi|.$$
\end{lemma}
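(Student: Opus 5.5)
We will derive the estimate from the maximum principle of Lemma \ref{le.maximum}, applied on $\widetilde\Omega_\varepsilon$ with a barrier built from $Z$ and from a second function absorbing the right-hand side $h$. The plan is to find a positive supersolution $\Psi$ on $\widetilde\Omega_\varepsilon$ satisfying
$$-\mathcal L\Psi\ge |h|\quad\text{in }\widetilde\Omega_\varepsilon,\qquad \Psi\ge|\phi|\quad\text{on }\partial\widetilde\Omega_\varepsilon,\qquad \|\Psi\|_\infty\le C(\|\phi\|_\imath+\|h\|_\ast).$$
Once this is done, the maximum principle applied to $\Psi\pm\phi$ gives $|\phi|\le\Psi$ in $\widetilde\Omega_\varepsilon$. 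Outside $\widetilde\Omega_\varepsilon$ we trivially have $|\phi|\le\|\phi\|_\imath$, and the statement follows.

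We will take $\Psi=2\|\phi\|_\imath Z+\|h\|_\ast\Psi_1$. On $\partial\widetilde\Omega_\varepsilon$ there are two kinds of boundary points. On the circles $|x-\xi'_j|=R_1$ we have $2Z>1$, so the first term dominates $|\phi|$. On $\partial\Omega_\varepsilon$ we have $\phi=0$. Moreover $-\mathcal LZ>0$ in $\widetilde\Omega_\varepsilon$, so the first term only helps. It remains to build $\Psi_1\ge0$, bounded uniformly in $\varepsilon$, with
$$-\mathcal L\Psi_1\ge\sum_{j=1}^m\frac1{(1+|x-\xi'_j|)^{\min\{3,4-\alpha\}}}+\varepsilon^2\quad\text{in }\widetilde\Omega_\varepsilon .$$

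For the $\varepsilon^2$ part of the weight, we will use $f_0(x)=C_0\left(1-\varepsilon^2|x-\varepsilon x_0|^2/d^2\right)$, where $d$ is large so that $\Omega\subset B(x_0,d)$. Then $-\Delta f_0=4C_0\varepsilon^2/d^2$, and $0\le f_0\le C_0$.

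For the decaying part, set $\beta=\min\{3,4-\alpha\}>2$ and, for each $j$, use the radial function
$$f_j(x)=A-\frac{B}{(1+|x-\xi'_j|)^{\beta-2}},$$
with $A\ge B$ so that $0\le f_j\le A$. A direct computation gives, for $r=|x-\xi'_j|$ large,
$$-\Delta f_j=\frac{B(\beta-2)^2}{(1+r)^{\beta}}+O\!\left(\frac{B}{r(1+r)^{\beta-1}}\right)\cdot(\text{lower order}),$$
which is $\ge c\,B(1+r)^{-\beta}$ for $r\ge R_1$, enlarging $R_1$ if needed. When $\beta=3$ we have $\beta-2=1$, and one checks $-\Delta(1+r)^{-1}=-(1+r)^{-2}/r\cdot\ldots$. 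The sign must be verified by hand, and if it fails the fix is to use $\log$-type or $r^{-\gamma}$ profiles with $0<\gamma<\beta-2$, which still yield the needed decay rate $r^{-\gamma-2}\ge r^{-\beta}$ once $\gamma\le\beta-2$.

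The \textbf{main obstacle} is the zeroth-order part of $\mathcal L$, which has the wrong sign. By estimate \eqref{integralestimate} from the proof of Lemma \ref{le.maximum}, applied to any bounded $f$, the two potential terms of $\mathcal L$ acting on $f$ are bounded by $C\|f\|_\infty\sum_j(1+|x-\xi'_j|)^{-4}$. This does not decay faster than $(1+r)^{-\beta}$ in a useful way, since $\beta\le3<4$. To absorb it we add a large multiple of $Z$, which has $-\mathcal LZ\ge c\,a\sum_j(1+r_j)^{-4}$ from the proof of Lemma \ref{le.maximum}. The natural worry is circularity: the multiple of $Z$ needed is proportional to $\|f\|_\infty$, and $Z$ is itself bounded, so adding it does not reintroduce an unbounded term. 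The remaining check is that the constants can be chosen consistently. We first fix $B$ and $A$, then choose the multiple of $Z$ as a large constant times $A+C_0$. Since $4>\beta$, the term $(1+r)^{-4}$ is dominated by $(1+r)^{-\beta}$ for $r\ge R_1$, so the positive $-\Delta f_j$ terms in fact already absorb these bad terms once $R_1$ is large. Hence $\Psi_1=\sum_j f_j+f_0+MZ$ works, with all constants independent of $\varepsilon$.
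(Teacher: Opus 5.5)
Your proposal is correct and is essentially the paper's argument. You feed a barrier $2\|\phi\|_\imath Z+\|h\|_\ast\Psi_1$ into the maximum principle of Lemma \ref{le.maximum}, and absorb the potential terms of $\mathcal L$, which are $O\big(\sum_j(1+|x-\xi'_j|)^{-4}\big)$ by \eqref{integralestimate}, through the gap $4>\min\{3,4-\alpha\}$ after enlarging $R_1$. The only difference is the choice of supersolution: you use the explicit profiles $A-B(1+r)^{-(\beta-2)}$ plus a global quadratic, whereas the paper uses the explicit radial solutions of $-\Delta\psi_j=|x-\xi'_j|^{-(\beta+2)}+\varepsilon^2$ on annuli (the paper's $\beta$ is your $\beta-2$).

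The $MZ$ term is unnecessary: the quantitative bound on $-\mathcal LZ$ you invoke is not actually proved in Lemma \ref{le.maximum}, and your final absorption step does not use it. Your hedging about the sign is also unnecessary, since $\Delta(1+r)^{-\gamma}=\gamma(1+r)^{-\gamma-2}\left(\gamma-\frac1r\right)>0$ for $r>\frac1\gamma$, including $\gamma=1$.
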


\begin{proof}
Consider the solution to the problem
$$\left\{\begin{array}{ll}
-\Delta\psi_j(x)=\frac1{\left|x-\xi'_j\right|^{\beta+2}}+\varepsilon^2,&R_1<\left|x-\xi'_j\right|<\frac D\varepsilon,\\
\psi_j(x)=0,&\left|x-\xi'_j\right|=R_1,\frac D\varepsilon,
\end{array}\right.$$
where $\beta:=\min\{1,2-\alpha\}<2$ and $D:=2\mathrm{diam}(\Omega)$, so that $\Omega_\varepsilon$ is contained in the ball centered at $\xi'_j$ of radius $\frac D\varepsilon$. We have the explicit expression
\begin{align*}\psi_j(x)=&\frac1{\beta^2}\left(\frac1{R_1^\beta}-\frac1{\left|x-\xi'_j\right|^\beta}\right)\\
&+\frac{\varepsilon^2}4\left(R_1^2-\left|x-\xi'_j\right|^2\right)+\left(\frac1{\beta^2}\left(\left(\frac\varepsilon D\right)^\beta-\frac1{R_1^\beta}\right)+\frac{\varepsilon^2}4\left(\left(\frac D\varepsilon\right)^2-R_1^2\right)\right)\frac{\log\frac{\left|x-\xi'_j\right|}{R_1}}{\log\frac D{R_1\varepsilon}},
\end{align*}
which gets $\psi_j(x)\le\frac{\varepsilon^2R_1^2+D^2}4$ for all $x\in\widetilde\Omega_\varepsilon$, uniformly bounded independently on $\varepsilon$; moreover, $\psi_j(x)>0$ for all $x\in\widetilde\Omega_\varepsilon$ by the maximum principle.\\
Given $\phi$, we set
$$\phi_0(x):=2\|\phi\|_\imath Z(x)+2\|h\|_\ast\sum_{j=1}^m\psi_j(x),$$
where $Z(x)$ is as in Lemma \ref{le.maximum}.
Since $\psi_j>0$ and $Z<\frac12$, for all $x\in\widetilde\Omega_\varepsilon$ we get
$$\phi_0(x)>2\|\phi\|_\imath Z(x)\ge\|\phi\|_\imath\ge\phi(x).$$
Moreover, since $\psi_j$'s are bounded, arguing as in \eqref{integralestimate} we find
\begin{align*}
&\left(\int_{\Omega_\varepsilon}\frac{k(\varepsilon y)e^{V(y)}}{|x-y|^\alpha}dy\right)k(\varepsilon x)e^{V(x)}\psi_j(x)+\left(\int_{\Omega_\varepsilon}\frac{k(\varepsilon y)e^{V(y)}\psi_j(y)}{|x-y|^\alpha}dy\right)k(\varepsilon x)e^{V(x)}\\
\le&C\int_{\Omega_\varepsilon}\frac1{|x-y|^\alpha}\sum_{j=1}^m\frac1{\left(1+\left|y-\xi'_j\right|\right)^{4-\alpha}}dy
\sum_{k=1}^m\frac1{\left(1+\left|x-\xi'_k\right|\right)^{4-\alpha}}\\
\le&C\sum_{j=1}^m\frac1{\left(1+\left|x-\xi'_j\right|\right)^4}.
\end{align*}
From the definition of $\|\cdot\|_\ast$ we obtain:
\begin{align*}
\mathcal L\phi_0(x)<&2\|h\|_\ast\sum_{j=1}^m\mathcal L\psi_j(x)\\
\le&2\|h\|_\ast\left(-\sum_{j=1}^m\frac1{\left|x-\xi'_j\right|^{\beta+2}}-m\varepsilon^2+\frac C{\left|x-\xi'_j\right|^4}\right)\\
\le&2\|h\|_\ast\left(\sum_{j=1}^m\frac1{\left|x-\xi'_j\right|^{\beta+2}}+\varepsilon^2\right)\left(-1+\frac C{R_1^{2-\beta}}\right)\\
\le&-\|h\|_\ast\left(\sum_{j=1}^m\frac1{\left|x-\xi'_j\right|^{\beta+2}}+\varepsilon^2\right)\\
\le&h(x)\\
=&\mathcal L\phi(x),
\end{align*}
after choosing, if necessary, a larger $R_1$.\\
Therefore, from Lemma \ref{le.maximum} we get $\phi\le\phi_0$ on $\widetilde\Omega_\varepsilon$. Similarly, we get $-\phi\le\phi_0$ in $\widetilde\Omega_\varepsilon$; therefore
$$\|\phi\|_\infty\le\left\|\phi_0\right\|_\infty\le2\|\phi\|_\imath\|Z\|_\infty+2\|h\|_\ast\sum_{j=1}^m\|\phi_j\|_\infty\le C(\|\phi\|_\imath+\|h\|_\ast).$$
\end{proof}\

\begin{lemma}\label{le. inver1}
There exists $C>0$ such that any solution $\phi$ to
\begin{equation}\label{eq. inver 1}
\left\{\begin{array}{ll}
\mathcal L\phi=h&\text{in }\Omega_\varepsilon,\\
\phi=0,&\text{on }\partial\Omega_\varepsilon,\\
\int_{\Omega_\varepsilon}\chi_jz_{i,j}\phi=0,&i=0,1,2,\,j=1,\dots,m.
\end{array}\right.
\end{equation}
satisfies
$$\|\phi\|_\infty\le C\|h\|_\ast.$$
\end{lemma}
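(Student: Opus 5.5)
We argue by contradiction, in the standard blow-up style of \cite{dkm,esp}. Suppose there exist sequences $\varepsilon_n\to0$, points $\xi^n$ satisfying \eqref{dist}, and pairs $\phi_n,h_n$ solving \eqref{eq. inver 1} with $\|\phi_n\|_\infty=1$ and $\|h_n\|_\ast\to0$. By Lemma \ref{innernorm} we have $1\le C(\|\phi_n\|_\imath+\|h_n\|_\ast)$, so $\|\phi_n\|_\imath\ge c>0$. Hence, up to a subsequence, there is an index $j$ (fixed after relabeling) such that $\sup_{B_{R_1}(\xi'_{j})}|\phi_n|\ge c$. The goal is to show that this is impossible.

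Next we pass to the limit around that peak. Set $\tilde\phi_n(z):=\phi_n(\xi'_j+\mu_jz)$ and, up to subsequences, assume $\mu_j\to\mu^\ast\in[1/C,C]$ and $\xi^n\to\xi^\ast$. By \eqref{eq. in ball} and \eqref{eq. out ball}, the coefficients of $\mathcal L$ converge locally uniformly, in the rescaled variables, to those of $\mathcal L_0$. For the nonlocal terms, $\phi_n$ is bounded and the weights $e^{V}$ decay like $(1+|y-\xi'_k|)^{\alpha-4}$, which is integrable. The contribution of the other peaks $k\ne j$ to the convolution is $O(\varepsilon^\alpha)$ near $\xi'_j$, as in \eqref{knotj2}. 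Dominated convergence then gives
$$\int_{\Omega_\varepsilon}\frac{k(\varepsilon y)e^{V(y)}\phi_n(y)}{|x-y|^\alpha}dy\to\int_{\mathbb R^2}\frac{e^{w_{1,0}(y)}\phi(y)}{|x-y|^\alpha}dy$$
locally uniformly, after rescaling. Since $h_n\to0$ locally uniformly and the right-hand side of the equation is bounded, elliptic estimates give $\tilde\phi_n\to\phi$ in $C^1_{\rm loc}(\mathbb R^2)$. The limit $\phi$ is bounded, solves $\mathcal L_0\phi=0$, and satisfies $\sup_{B_{R_1/\mu^\ast}}|\phi|\ge c$.

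By the nondegeneracy result of \cite{gaol}, $\phi=\sum_{i=0}^2a_iw_i$. The orthogonality conditions pass to the limit because $\chi_j$ has compact support of radius $R_0+1$, giving $\int\chi(\mu^\ast|z|)w_i\phi=0$ for $i=0,1,2$. The family $w_i$ is odd/even in the coordinates: $w_1,w_2$ are odd in $x_1,x_2$ respectively, while $w_0$ is radial. Hence the $3\times3$ Gram matrix $\left(\int\chi(\mu^\ast|z|)w_iw_l\right)$ is diagonal with positive entries, which forces $a_i=0$ and $\phi\equiv0$. This contradicts $\sup_{B_{R_1/\mu^\ast}}|\phi|\ge c$.

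The main obstacle is the limit of the nonlocal term: we must justify the convergence of the convolution with only $L^\infty$ control on $\phi_n$ and with $\Omega_\varepsilon$ expanding. We also need the limit $\phi$ to be bounded (it is, with $|\phi|\le1$) so that the classification of \cite{gaol} applies. The tail estimates \eqref{integralestimate} handle this, since the contribution of $\{|y-\xi'_j|>R\}$ to the convolution is $O(R^{-(2-\alpha)})$ uniformly in $n$. This is also the reason for imposing orthogonality to $z_{0,j}$ here: without it the limit could be a multiple of $w_0$, which is the source of the logarithmic loss in Proposition \ref{prop1}.
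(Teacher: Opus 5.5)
Your proposal is correct and follows the same route as the paper. It argues by contradiction with $\|\phi_n\|_\infty=1$ and $\|h_n\|_\ast\to0$, uses Lemma \ref{innernorm} to localize a non-vanishing sup in $B_{R_1}(\xi'_j)$, blows up to a bounded solution of the limiting linearized equation, applies the nondegeneracy result of \cite{gaol}, and passes the three orthogonality conditions to the limit to force the limit to vanish. Your additional care fills in details the paper leaves implicit: rescaling by $\mu_j$ with a subsequence $\mu_j\to\mu^\ast$, the tail bounds justifying convergence of the nonlocal terms, and the parity argument making the Gram matrix diagonal.
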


\begin{proof}
We argue by contradiction: we assume there exist sequences $\varepsilon_n\to0$, ${\xi'_j}^n\in\Omega_{\varepsilon_n}$, $\phi_n,h_n$ such that $\|h_n\|_\ast\to0,\|\phi_n\|_\infty=1$ and
$$\left\{\begin{array}{ll}
\mathcal L\phi_n=h_n,&\text{in }\Omega_{\varepsilon_n},\\
\phi_n=0,&\text{on }\partial\Omega_{\varepsilon_n},\\
\int_{\Omega_{\varepsilon_n}}\chi_jz_{i,j}\phi_n=0, &i=0,1,2,\,j=1,\dots,m.
\end{array}\right.$$
From Lemma \ref{innernorm}, there exists $C>0$, independent of $n$, such that $\|\phi_n\|_\imath\ge\frac1C$. therefore, for each $n$ there exists an index $j\in\{1,\dots,m\}$ satisfying
\begin{equation}\label{notvanishing}
\sup_{\left|x-{\xi'_j}^n\right|\le R_1}|\phi_n(x)|\ge\frac1C;
\end{equation}
without loss of generality, the same index $j$ works for all $n$.\\
We then re-center the functions around this points by defining
$$\widehat\phi_n(x):=\phi_n\left({\xi'_j}^n+x\right).$$
Standard elliptic estimates imply that, after passing to a further subsequence, $\widehat\phi_n(x)$ converges to a bounded solution $\widehat\phi(x)$ of the problem
$$\Delta\widehat\phi(x)+\left(\int_{\mathbb R^2}\frac{e^{w_{\mu_j,0}(y)}\widehat\phi(y)}{|x-y|^\alpha}dy\right)e^{w_{\mu_j,0}(x)}+\left(\int_{\mathbb R^2}\frac{e^{w_{\mu_j,0}(y)}dy}{|x-y|^\alpha}\right)e^{w_{\mu_j,0}(x)}\widehat\phi(x)=0.$$
From \cite[Theorem 1.1]{gaol}, $\widehat\phi$ must be a linear combination of the functions $z_{i,j}$ for $i=0,1,2$.\\
We may pass to the limit in the orthogonality conditions and get
$$\int_{\mathbb R^2}\chi_j(|x|)z_{i,j}(x)\widehat\phi(x)dx=0,\qquad i=0,1,2,$$
implying $\widehat\phi\equiv0$.\\
On the other hand, \eqref{notvanishing} implies
$$\sup_{|x|\le R_1}\left|\widehat\phi(x)\right|\ge\frac1C>0,$$
which gives the contradiction we wished.
\end{proof}\

We now derive a priori estimates for the linearized problem after removing the orthogonality conditions associated with the dilation modes, namely we consider
\begin{equation}\label{eq. inver 2}
\left\{\begin{array}{ll}
\mathcal L\phi=h&\text{in }\Omega_\varepsilon,\\
\phi=0,&\text{on }\partial\Omega_\varepsilon,\\
\int_{\Omega_\varepsilon}\chi_jz_{i,j}\phi=0,&i=1,2,\,j=1,\dots,m.
\end{array}\right.
\end{equation}\

Before that, we need a technical estimates on some modified version of the kernel generator $z_{0,j}$.

\begin{lemma}\label{ltildez0j}
Consider, for a fixed $j=1,\dots,m$, two smooth positive radial cut-off functions $\eta_{1,j}\left(\left|x-\xi'_j\right|\right)$, $\eta_{2,j}\left(\left|x-\xi'_j\right|\right)$ such that
\begin{equation}\label{eta12}
\left\{\begin{array}{ll}\eta_{1,j}(t)\equiv1&\mbox{if }t<R,\\\eta_{1,j}(t)\equiv0&\mbox{if }t>R+1,\\-C\le\eta_{1,j}'\le0,\\\left|\eta_{1,j}'\right|\le C,\end{array}\right.\qquad\qquad\qquad\left\{\begin{array}{ll}\eta_{2,j}(t)\equiv1&\mbox{if }t<\frac\tau{4\varepsilon},\\\eta_{2,j}(t)\equiv0&\mbox{if }t>\frac\tau{3\varepsilon},\\-C\varepsilon\le\eta_{2,j}'\le0\\\left|\eta_{2,j}''\right|\le C\varepsilon^2,\end{array}\right.
\end{equation}
where $R>R_0+1$ is fixed, and define
$$\widetilde z_{0,j}:=\eta_{1,j}z_{0,j}+(1-\eta_{1,j})\eta_{2,j}h_j z_{0,j},\qquad\qquad\qquad h_j(x):=\frac{\log\frac\tau{\varepsilon\left|x-\xi'_j\right|}}{\log\frac\tau{\varepsilon R}}.$$
Then,
$$\|\mathcal L\widetilde z_{0,j}\|_\ast\le\frac C{\log\frac1\varepsilon}$$
\end{lemma}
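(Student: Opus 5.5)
We split $\Omega_\varepsilon$ into the regions where the cut-offs take different forms, and estimate $\mathcal L\widetilde z_{0,j}$ in each against the weight $\sum_k(1+|x-\xi'_k|)^{-\min\{3,4-\alpha\}}+\varepsilon^2$. The regions are: (i) $|x-\xi'_j|\le R$, where $\widetilde z_{0,j}=z_{0,j}$; (ii) $R\le|x-\xi'_j|\le R+1$, the transition of $\eta_{1,j}$; (iii) $R+1\le|x-\xi'_j|\le\frac\tau{4\varepsilon}$, where $\widetilde z_{0,j}=h_jz_{0,j}$; (iv) $\frac\tau{4\varepsilon}\le|x-\xi'_j|\le\frac\tau{3\varepsilon}$, the transition of $\eta_{2,j}$; (v) the rest of $\Omega_\varepsilon$, where $\widetilde z_{0,j}\equiv0$ and only the nonlocal term survives.

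The first step is a comparison of $\mathcal L$ with the limiting operator around the $j$-th bubble. By \eqref{eq. in ball} and \eqref{eq. out ball}, for $x\in B_j$ the potential $\left(\int k e^V/|x-y|^\alpha\right)ke^{V(x)}$ equals the corresponding quantity for $w_j$ up to an error $O(\varepsilon(1+|x-\xi'_j|)^{-3})$. This is exactly the computation carried out in \eqref{i1j-i2j}, plus contributions $O(\varepsilon^\alpha)$ times the weight from the other balls. The same holds for the nonlocal term applied to any bounded function supported near $\xi'_j$. Since $|\widetilde z_{0,j}|\le C$, and $z_{0,j}$ solves the limiting linearized equation on $\mathbb R^2$ (it is the dilation generator, by \cite{gaol}), on region (i) $\mathcal L\widetilde z_{0,j}$ is the truncation error of the nonlocal integral plus these $O(\varepsilon^{\min\{\alpha,1\}})$ corrections. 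The truncation error is the difference between $\int_{\mathbb R^2}e^{w_j}z_{0,j}/|x-y|^\alpha$ and the integral of $e^{w_j}\widetilde z_{0,j}$. It is supported where $|y-\xi'_j|\ge R$, and there $|z_{0,j}-\widetilde z_{0,j}|\le C(1-h_j)$. This needs care: its weighted size is bounded by integrals like $\int(1-h_j)e^{w_j}$ against the kernel.

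The main work is in regions (ii)--(iv), where
$$\mathcal L(h_jz_{0,j})=h_j\mathcal L z_{0,j}+2\nabla h_j\cdot\nabla z_{0,j}+z_{0,j}\Delta h_j+\text{commutator of the nonlocal term}.$$
Here $\Delta h_j=0$, $|\nabla h_j|\le\frac{C}{|x-\xi'_j|\log\frac1\varepsilon}$ and $|\nabla z_{0,j}|\le C|x-\xi'_j|^{-3}$, so the gradient term is $O\left(\frac{1}{\log\frac1\varepsilon}|x-\xi'_j|^{-4}\right)$, which is fine. The commutator is
$$\left(\int\frac{e^{w_j}(h_j(y)-h_j(x))z_{0,j}(y)}{|x-y|^\alpha}dy\right)e^{w_j(x)},$$
with $h_j$ extended suitably. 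We split it as in the proof of Proposition \ref{le. estimate of E}, using \eqref{triang} and the logarithmic modulus of $h_j$: $|h_j(y)-h_j(x)|\le\frac{C\log(2+|x-y|/(1+|x-\xi'_j|))+C\log(\cdots)}{\log\frac1\varepsilon}$. Together with the decay $e^{w_j(x)}\sim|x-\xi'_j|^{-(4-\alpha)}$, this yields $O\left(\frac{1}{\log\frac1\varepsilon}(1+|x-\xi'_j|)^{-(4-\alpha)}\right)$. In region (ii), the derivatives of $\eta_{1,j}$ multiply $(1-h_j)z_{0,j}$. Since $1-h_j=O\left(\frac{1}{\log\frac1\varepsilon}\right)$ for $|x-\xi'_j|\le R+1$, these terms are $O(1/\log\frac1\varepsilon)$ on a compact set. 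In region (iv), $h_j=O\left(\frac1{\log\frac1\varepsilon}\right)$, $|\nabla\eta_{2,j}|\le C\varepsilon$ and $|\Delta\eta_{2,j}|\le C\varepsilon^2$. Hence the terms $2\nabla\eta_{2,j}\cdot\nabla(h_jz_{0,j})+h_jz_{0,j}\Delta\eta_{2,j}$ are $O(\varepsilon^2/\log\frac1\varepsilon)$, which is covered by the $\varepsilon^2$ in the weight. Region (v) contributes only a nonlocal tail, $O(\varepsilon^\alpha)$ times the weight, as in \eqref{knotj2}.

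The main obstacle is the commutator/truncation of the nonlocal term in regions (i)--(iii). It is not localized, and the exact cancellation making $z_{0,j}$ a kernel element must be shown to survive the cut-off up to $O\left(\frac1{\log\frac1\varepsilon}\right)$. I would handle it by writing $\widetilde z_{0,j}=z_{0,j}-(1-\widetilde h)z_{0,j}$, where $\widetilde h:=\eta_{1,j}+(1-\eta_{1,j})\eta_{2,j}h_j$, and bounding $|1-\widetilde h(y)|$ by $C\frac{\log(1+|y-\xi'_j|)}{\log\frac1\varepsilon}$ for $|y-\xi'_j|\le\frac\tau{3\varepsilon}$ and by $1$ beyond. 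The decay $e^{w_j(y)}\lesssim(1+|y-\xi'_j|)^{-(4-\alpha)}$ then makes the remaining integrals converge with a logarithmic factor. The three-zone splitting \eqref{triang} gives the pointwise weight $(1+|x-\xi'_j|)^{-\min\{3,4-\alpha\}}$. Since every error found is $O(1/\log\frac1\varepsilon)$ or polynomially small in $\varepsilon$, this gives $\|\mathcal L\widetilde z_{0,j}\|_\ast\le C/\log\frac1\varepsilon$.
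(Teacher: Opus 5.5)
Your proposal is correct and follows the paper's proof. It uses the same decomposition into annuli around $\xi'_j$, the cancellation $\mathcal L_jz_{0,j}=0$ together with the harmonicity of $h_j$, the comparison of $\mathcal L$ with $\mathcal L_j$ as in \eqref{lz0j}, and the commutator bound for the nonlocal term via the logarithmic modulus of $h_j$ and the splitting \eqref{triang}. Where you are more careful than the paper is the nonlocal truncation: the paper asserts $\mathcal L\widetilde z_{0,j}=\mathcal Lz_{0,j}$ on $\Omega_{1,j}$, bounds $\mathcal L(\widetilde z_{0,j}-z_{0,j})$ on $\Omega_{2,j}$ from local information only, and tacitly extends $h_j$ globally on $\Omega_{3,j}$; your bound $|1-\widetilde h(y)|\le C\log(1+|y-\xi'_j|)/\log\frac1\varepsilon$ is exactly what justifies that the convolution feels these modifications only at order $1/\log\frac1\varepsilon$ in the $\|\cdot\|_\ast$-norm.
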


\begin{proof}
We first notice that $h_j$ is the solution to the Dirichlet problem
$$\left\{\begin{array}{ll}-\Delta h_j(x)=0&R<\left|x-\xi'_j\right|<\frac\tau\varepsilon,\\h_j(x)=1&\left|x-\xi'_j\right|=R,\\h_j=0&\left|x-\xi'_j\right|=\frac\tau\varepsilon,\end{array}\right.$$
hence in particular it is harmonic.\\
In order to prove the Lemma, we consider the \emph{limiting operator} around the $j$-th bubble as
\begin{equation}
\label{lj}\mathcal L_j\psi:=\Delta\psi+\left(\int_{\mathbb R^2}\frac{e^{w_j(y)}\psi(y)}{|x-y|^\alpha}dy\right)e^{w_j(x)}+\left(\int_{\mathbb R^2}\frac{e^{w_j(y)}}{|x-y|^\alpha}dy\right)e^{w_j(x)}\psi(x).
\end{equation}
Clearly $z_{0,j}$ belongs to the kernel of the limiting operator, namely
\begin{equation}\label{eq. inve2 1}
\mathcal L_jz_{0,j}=0\qquad\text{in }\mathbb R^2.
\end{equation}
From this, we deduce a global estimate for $\mathcal Lz_{0,j}$ on $\Omega_\varepsilon$; in fact, using estimates \eqref{eq. in ball} and \eqref{eq. out ball} and the boundedness of $\|z_{0,j}\|_\infty$, arguing as in Proposition \ref{le. estimate of E} one gets:
\begin{align}
\nonumber|\mathcal Lz_{0,j}|=&\left|\left(\int_{\Omega_\varepsilon}\frac{k(\varepsilon y)e^{V(y)}z_{0,j}(y)}{|x-y|^\alpha}dy\right)k(\varepsilon x)e^{V(x)}-\left(\int_{\mathbb R^2}\frac{e^{w_j(y)}z_{0,j}(y)}{|x-y|^\alpha}dy\right)e^{w_j(x)}\right.\\
\nonumber&\left.+\left(\int_{\Omega_\varepsilon}\frac{k(\varepsilon y)e^{V(y)}}{|x-y|^\alpha}dy\right)k(\varepsilon x)e^{V(x)}z_{0,j}(x)-\left(\int_{\mathbb R^2}\frac{e^{w_j(y)}}{|x-y|^\alpha}dy\right)e^{w_j(x)}z_{0,j}(x)\right|\\
\nonumber\le&C\left(\|z_{0,j}\|_\infty\int_{B_j}\frac{\varepsilon\left(\left|y-\xi'_j\right|+\left|x-\xi'_j\right|\right)}{|x-y|^\alpha}\frac1{\left(1+\left|y-\xi'_j\right|\right)^{4-\alpha}}dy\frac1{\left(1+\left|x-\xi'_j\right|\right)^{4-\alpha}}\right.\\
\nonumber&+\left.\|z_{0,j}\|_\infty\sum_{k\ne j}\int_{B_k}\frac1{|x-y|^\alpha}\frac1{(1+|y-\xi'_i|)^{4-\alpha}}dy\frac1{\left(1+\left|x-\xi'_j\right|\right)^{4-\alpha}}\right.\\
\nonumber&\left.+\varepsilon^{4-\alpha}\|z_{0,j}\|_\infty\int_{B_0}\frac1{|x-y|^\alpha}dy\frac1{\left(1+\left|x-\xi'_j\right|\right)^{4-\alpha}}\right.\\
\nonumber&\left.+\|z_{0,j}\|_\infty\int_{\mathbb R^2\setminus\Omega_\varepsilon}\frac1{|x-y|^\alpha}\frac1{\left(1+\left|y-\xi'_j\right|\right)^{4-\alpha}}dy\frac1{\left(1+\left|x-\xi'_j\right|\right)^{4-\alpha}}\right)\\
\nonumber\le&C\left(\|z_{0,j}\|_\infty\left(\frac{\varepsilon^{\min\{\alpha,1\}}}{\left(1+\left|x-\xi'_j\right|\right)^{\min\{3,4-\alpha\}}}+\frac{\varepsilon^2}{\left(1+\left|x-\xi'_j\right|\right)^{4-\alpha}}\right)\right)\\
\label{lz0j}\le&\frac{C\varepsilon^{\min\{\alpha,1\}}}{\left(1+\left|x-\xi'_j\right|\right)^{\min\{3,4-\alpha\}}}.
\end{align}
We now split $\Omega_\varepsilon$ in four sub-regions, according to the definition of $\widetilde z_{0,j}$:
\begin{align}
\label{omegai,j}\Omega_{1,j}:=&\left\{x\in\Omega_\varepsilon:\,\left|x-\xi_j\right|<R\right\},\\
\nonumber\Omega_{2,j}:=&\left\{x\in\Omega_\varepsilon:\,R\le\left|x-\xi_j\right|<R+1\right\},\\
\nonumber\Omega_{3,j}:=&\left\{x\in\Omega_\varepsilon:\,R+1\le\left|x-\xi_j\right|<\frac\tau{4\varepsilon}\right\},\\
\nonumber\Omega_{4,j}:=&\left\{x\in\Omega_\varepsilon:\,\left|x-\xi_j\right|\ge\frac\tau{4\varepsilon}\right\};
\end{align}
then, we estimate $\mathcal L\widetilde z_{0,j}$ on each region.\\
\begin{itemize}
\item In $\Omega_{1,j}$ we have $\widetilde z_{0,j}=z_{0,j}$, hence
\begin{equation}\label{eq. inve2 4}|\mathcal L\widetilde z_{0,j}(x)|=|\mathcal Lz_{0,j}(x)|\le\frac{C\varepsilon^{\min\{\alpha,1\}}}{\left(1+\left|x-\xi'_j\right|\right)^{\min\{3,4-\alpha\}}}.
\end{equation}
\item In $\Omega_{2,j}$, we have
$$\widetilde z_{0,j}=z_{0,j}-(1-\eta_{1,j})(1-h_j)z_{0,j}.$$
In this region $z_{0,j}$ and $\eta_{1,j}$ are uniformly bounded in $C^2$; moreover,
$$|1-h_j|+|\nabla h_j|\le\frac C{\log\frac1\varepsilon}.$$
Therefore, since $h_j$ is harmonic,
\begin{equation}\label{diffz0j}
|\widetilde z_{0,j}-z_{0,j}|+|\Delta(\widetilde z_{0,j}-z_{0,j})|\le\frac C{\log\frac1\varepsilon},
\end{equation}
hence
\begin{align}
\nonumber|\mathcal L\widetilde z_{0,j}(x)|\le&|\mathcal L(\widetilde z_{0,j}-z_{0,j})(x)|+|\mathcal Lz_{0,j}(x)|\\
\nonumber\le&\frac C{\log\frac1\varepsilon}+\frac{C\varepsilon^{\min\{\alpha,1\}}}{\left(1+\left|x-\xi'_j\right|\right)^{\min\{3,4-\alpha\}}}\\
\label{ltildez0j2}\le&\frac C{\log\left(\frac1\varepsilon\right)\left(1+\left|x-\xi'_j\right|\right)^{\min\{3,4-\alpha\}}}.
\end{align}
\item In $\Omega_{3,j}$ we have $\widetilde z_{0,j}=h_jz_{0,j}$; therefore, using again \eqref{eq. inve2 1} and then again the harmonicity of $h_j$, we get
\begin{align*}
&|\mathcal L\widetilde z_{0,j}(x)|\\
\le&|\mathcal L_j(h_jz_{0,j})(x)-h_j(x)\mathcal L_jz_{0,j}(x)|+|h_j(x)(\mathcal L-\mathcal L_j)z_{0,j}(x)|\\
\le&\left|2\nabla h_j(x)\cdot\nabla z_{0,j}(x)+\left(\int_{\mathbb R^2}\frac{e^{w_j(y)}z_{0,j}(y)h_j(y)}{|x-y|^\alpha}dy\right)e^{w_j(x)}-\left(\int_{\mathbb R^2}\frac{e^{w_j(y)}z_{0,j}(y)}{|x-y|^\alpha}dy\right)e^{w_j(x)}h_j(x)\right|\\
&+\|h_j\|_\infty|(\mathcal L-\mathcal L_j)z_{0,j}(x)|\\
\le&2|\nabla h_j(x)||\nabla z_{0,j}(x)|+\frac C{\log\frac1\varepsilon}\underbrace{\int_{\mathbb R^2}\frac{\left|\log\frac{\left|x-\xi'_j\right|}{\left|y-\xi'_j\right|}\right|}{|x-y|^\alpha}\frac1{\left(1+\left|y-\xi'_j\right|\right)^{4-\alpha}}dy\frac1{\left(1+\left|x-\xi'_j\right|\right)^{4-\alpha}}}_{=:I(x)}\\
&+\frac{C\varepsilon^{\min\{\alpha,1\}}}{\left(1+\left|x-\xi'_j\right|\right)^{\min\{3,4-\alpha\}}}
\end{align*}
From the explicit expressions of $h_j,z_{0,j}$ we get
$$|\nabla h_j(x)||\nabla z_{0,j}(x)|\le\frac C{\left(1+\left|x-\xi'_j\right|\right)^4\log\frac1\varepsilon}.$$
We therefore suffice to estimate $I(x)$:
\begin{align*}
|I(x)|\le&C\left(\int_{\left\{|y-x|\le\frac{\left|x-\xi'_j\right|}2\right\}}|x-y|^{1-\alpha}\frac1{\left(1+\frac{\left|x-\xi'_j\right|}2\right)^{4-\alpha}}dy\frac1{\left(1+\left|x-\xi'_j\right|\right)^{4-\alpha}}\right.\\
&\left.+\int_{\left\{|y-x|>\frac{\left|x-\xi'_j\right|}2\right\}}\frac{\left|\log\left|x-\xi'_j\right|\right|+\left|\log\left|y-\xi'_j\right|\right|}{\left(\frac{\left|x-\xi'_j\right|}2\right)^\alpha}\frac1{\left(1+\left|y-\xi'_j\right|\right)^{4-\alpha}}dy\frac1{\left(1+\left|x-\xi'_j\right|\right)^{4-\alpha}}\right)\\
\le&C\left(\frac1{\left(1+\left|x-\xi'_j\right|\right)^{6-\alpha}}+\frac{1+\left|\log\left|x-\xi'_j\right|\right|}{\left(1+\left|x-\xi'_j\right|\right)^4}\right)\\
\le&\frac C{\left(1+\left|x-\xi'_j\right|\right)^{4-\alpha}};
\end{align*}
this shows that, again,
\begin{equation}\label{ltildez0j3}
|\mathcal L\widetilde z_{0,j}|\le\frac C{\log\left(\frac1\varepsilon\right)\left(1+\left|x-\xi'_j\right|\right)^{\min\{3,4-\alpha\}}}
\end{equation}
\item In $\Omega_{4,j}$ we have $\widetilde z_{0,j}=\eta_{2,j}h_jz_{0,j}$. Moreover,
$$|h_j(x)|\le\frac C{\log\frac1\varepsilon},\qquad\qquad\qquad|\nabla h_j(x)|\le\frac C{\log\left(\frac1\varepsilon\right)\left(1+\left|x-\xi'_j\right|\right)};$$
therefore, using the properties \eqref{eta12} of $\eta_{2,j}$ and the explicit expression of $z_{0,j}$ we get:
$$|\Delta\widetilde z_{0,j}(x)|=|\Delta(\eta_{2,j}(x)h_j(x)z_{0,j}(x))|\le\frac C{\log\frac1\varepsilon}\left(\frac1{\left(1+\left|x-\xi'_j\right|\right)^4}+\frac\varepsilon{\left(1+\left|x-\xi'_j\right|\right)^2}+\varepsilon^2\right)\le\frac{C\varepsilon^2}{\log\frac1\varepsilon}$$
Since here $\left|x-\xi'_j\right|>\frac\tau{4\varepsilon}$, then an estimate similar to \eqref{eq. out ball} holds true; therefore, arguing as in \eqref{integralestimate} one gets:
\begin{align*}
&\left|\left(\int_{\Omega_\varepsilon}\frac{k(\varepsilon y)e^{V(y)}}{|x-y|^\alpha}dy\right)k(\varepsilon x)e^{V(x)}\widetilde z_{0,j}(x)+\left(\int_{\Omega_\varepsilon}\frac{k(\varepsilon y)e^{V(y)}\widetilde z_{0,j}(y)}{|x-y|^\alpha}dy\right)k(\varepsilon x)e^{V(x)}\right|\\
\le&C\varepsilon^{4-\alpha}\left(\sum_{j=1}^m\int_{B_j}\frac1{|x-y|^\alpha}\frac1{\left(1+\left|y-\xi'_j\right|\right)^{4-\alpha}}dy+\varepsilon^{4-\alpha}\int_{B_0}\frac1{|x-y|^\alpha}dy\right)\\
\le&C\left(\sum_{j=1}^m\frac{\varepsilon^{4-\alpha}}{\left(1+\left|x-\xi'_j\right|\right)^\alpha}+\varepsilon^{6-\alpha}\right)\\
\le&C\varepsilon^4;
\end{align*}
thus,
\begin{equation}\label{ltildez0j4}
|\mathcal L\widetilde z_{0,j}(x)|\le\frac{C\varepsilon^2}{\log\frac1\varepsilon}.
\end{equation}
\end{itemize}
We conclude by putting together with \eqref{eq. inve2 4}, \eqref{ltildez0j2}, \eqref{ltildez0j3} and \eqref{ltildez0j4}.
\end{proof}

A crucial step in the proof of Proposition \ref{prop1} is the following variation of Lemma \ref{le. inver1}: after removing the orthogonality condition with respect to $z_{0,j}$, the norm of the inverse operator of $\mathcal L$ grows proportionally to the logarithm of $\varepsilon$.

\begin{lemma}\label{le. inver2}
Then there exists $C>0$ such that any solution $\phi$ of \eqref{eq. inver 2} satisfies
$$\|\phi\|_\infty\le C\log\left(\frac1\varepsilon\right)\|h\|_\ast.$$
\end{lemma}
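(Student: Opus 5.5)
We will follow the standard argument of del Pino--Kowalczyk--Musso: reduce to Lemma \ref{le. inver1} by correcting $\phi$ with multiples of the modified dilation kernels $\widetilde z_{0,j}$ from Lemma \ref{ltildez0j}, and estimate the coefficients through suitable test functions.

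\textbf{Construction.} Given a solution $\phi$ of \eqref{eq. inver 2}, set
$$\widetilde\phi:=\phi+\sum_{j=1}^md_j\widetilde z_{0,j},$$
with $d_j$ chosen so that $\int_{\Omega_\varepsilon}\chi_jz_{0,j}\widetilde\phi=0$ for all $j$. Since $\widetilde z_{0,j}=z_{0,j}$ on the support of $\chi_j$ (as $R>R_0+1$), and $\widetilde z_{0,j}$ vanishes near $\xi'_l$ for $l\neq j$, this gives
$$d_j=-\frac{\int\chi_jz_{0,j}\phi}{\int\chi_jz_{0,j}^2}.$$
The orthogonality with respect to $z_{i,l}$, $i=1,2$, is preserved up to $\int\chi_lz_{i,l}\widetilde z_{0,j}$. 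This vanishes for $l=j$ by radial symmetry against the odd functions $z_{i,j}$, and for $l\ne j$ because the supports are disjoint. Moreover $\widetilde z_{0,j}=0$ on $\partial\Omega_\varepsilon$ thanks to $\eta_{2,j}$. Hence $\widetilde\phi$ solves \eqref{eq. inver 1} with right-hand side $h+\sum_jd_j\mathcal L\widetilde z_{0,j}$, and Lemma \ref{le. inver1} together with Lemma \ref{ltildez0j} gives
$$\|\phi\|_\infty\le C\Big(\|h\|_\ast+\frac{\sum_j|d_j|}{\log\frac1\varepsilon}\Big)+C\sum_j|d_j|,$$
using $\|\widetilde z_{0,j}\|_\infty\le C$. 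The whole lemma therefore reduces to proving
$$|d_j|\le C\log\left(\tfrac1\varepsilon\right)\|h\|_\ast+o(1)\|\phi\|_\infty .$$

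\textbf{Main step: estimating $d_j$.} This is the main obstacle. We test the equation $\mathcal L\widetilde\phi=h+\sum_ld_l\mathcal L\widetilde z_{0,l}$ against $\widetilde z_{0,j}$, integrating over $\Omega_\varepsilon$. Using self-adjointness of $\mathcal L$ (the Riesz kernel is symmetric and both functions vanish on $\partial\Omega_\varepsilon$) we get
$$\int\widetilde\phi\,\mathcal L\widetilde z_{0,j}=\int h\,\widetilde z_{0,j}+\sum_ld_l\int\widetilde z_{0,l}\,\mathcal L\widetilde z_{0,j}.$$
\begin{itemize}
\item The left side is bounded by $C\|\widetilde\phi\|_\infty/\log\frac1\varepsilon$, by Lemma \ref{ltildez0j} and the remark after \eqref{norm}.
\item The term $\int h\,\widetilde z_{0,j}$ is bounded by $C\|h\|_\ast$.
\item The diagonal term $\int\widetilde z_{0,j}\mathcal L\widetilde z_{0,j}$ must be shown to be of exact size $\frac{c}{\log\frac1\varepsilon}$ with $c\neq0$.
\end{itemize}

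For the diagonal term, the main contribution comes from the annulus $\Omega_{2,j}\cup\Omega_{3,j}$, where $\Delta(h_jz_{0,j})$ produces $2\nabla h_j\cdot\nabla z_{0,j}$. Since $z_{0,j}\to-\frac{4-\alpha}2$ at infinity and $|\nabla h_j|\sim\frac{1}{|x|\log\frac1\varepsilon}$, an integration by parts (a Green identity on the annulus) gives
$$\int\widetilde z_{0,j}\mathcal L\widetilde z_{0,j}=-\int|\nabla h_j|^2z_{0,j}^2+\dots=-\frac{2\pi\left(\frac{4-\alpha}2\right)^2}{\log\frac1\varepsilon}\,(1+o(1)),$$
up to errors of order $\varepsilon^{\min\{\alpha,1\}}$ coming from \eqref{lz0j}. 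The nonlocal terms are handled as in the proof of Lemma \ref{ltildez0j}, through the estimate on $I(x)$. The cross terms with $l\ne j$ are $o\big(1/\log\frac1\varepsilon\big)$ because the supports are separated.

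Collecting these bounds yields
$$\frac{|d_j|}{\log\frac1\varepsilon}\le C\Big(\|h\|_\ast+\frac{\|\phi\|_\infty+\sum_l|d_l|}{\log\frac1\varepsilon}\Big)+o\Big(\frac{\sum_l|d_l|}{\log\frac1\varepsilon}\Big).$$
The term $\sum_l|d_l|$ on the right, multiplied by $C$, is not small, so the naive bound must be refined. We will use the sharper estimate: testing against $\widetilde z_{0,j}$ where $\widetilde\phi$ is already controlled by Lemma \ref{le. inver1}. That is, we substitute $\|\widetilde\phi\|_\infty\le C(\|h\|_\ast+\sum|d_l|/\log\frac1\varepsilon)$ into the left side, which then becomes $O\big(\|h\|_\ast/\log\frac1\varepsilon+\sum|d_l|/\log^2\frac1\varepsilon\big)$. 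This gives $|d_j|\le C\log\frac1\varepsilon\,\|h\|_\ast$, and plugging back into the bound for $\|\phi\|_\infty$ concludes the proof.
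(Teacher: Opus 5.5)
Your proposal is correct and follows the paper's overall scheme: the same $\widetilde\phi$ and $d_j$, Lemma \ref{le. inver1} applied to $\widetilde\phi$, testing against $\widetilde z_{0,j}$, cross terms of size $O(\varepsilon^\alpha)$, and a bound $\int\widetilde z_{0,j}\mathcal L\widetilde z_{0,j}\le -c/\log\frac1\varepsilon$. The difference is how you obtain that diagonal term.

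The paper works region by region. On $\Omega_{3,j}$ it only bounds $\mathcal L\widetilde z_{0,j}$ pointwise, which gives $O(R^{-\min\{1,2-\alpha\}}/\log\frac1\varepsilon)$. It extracts the negative leading term $\frac{2\pi}{\log\frac\tau{\varepsilon R}}\left(\frac{4-\alpha}2\right)^2J$, with $J\approx-1$, from the layer $\Omega_{2,j}$ where $\eta_{1,j}'\neq0$, so it must take $R$ large. Your route, made precise, is a global identity. Write $\widetilde z_{0,j}=g\,z_{0,j}$ with $g:=\eta_{1,j}+(1-\eta_{1,j})\eta_{2,j}h_j$ and use $\mathcal L_jz_{0,j}=0$:
$$\int gz_{0,j}\mathcal L_j(gz_{0,j})=-\int z_{0,j}^2|\nabla g|^2-\frac12\iint\frac{e^{w_j(x)}e^{w_j(y)}z_{0,j}(x)z_{0,j}(y)\left(g(x)-g(y)\right)^2}{|x-y|^\alpha}dxdy.$$
The first term equals $-2\pi\left(\frac{4-\alpha}2\right)^2/\log\frac1\varepsilon\,(1+o(1))$, coming essentially from $\Omega_{3,j}$. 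The second term is $O(\log^{-2}\frac1\varepsilon)$. This yields the same sharp constant as the paper and removes the need for large $R$.

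Two cautions on your write-up. First, the identity must be applied to the full $gz_{0,j}$ across $\Omega_{2,j}$, where there is no boundary term at $|x-\xi'_j|=R$ since $\nabla g=0$ there. It should not be applied to $h_jz_{0,j}$ on $\Omega_{3,j}$ alone. There, the term $2\nabla h_j\cdot\nabla z_{0,j}$ you single out integrates against $h_jz_{0,j}$ to only $O(R^{-2}/\log\frac1\varepsilon)$. Moreover, the boundary term at $|x-\xi'_j|=R+1$ cancels $-\int|\nabla h_j|^2z_{0,j}^2$ at leading order. Second, if you estimate the nonlocal part through $I(x)$ as in Lemma \ref{ltildez0j}, instead of symmetrizing as above, you only get $O(R^{\alpha-2}/\log\frac1\varepsilon)$, so you would again need $R$ large.

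Your closing bookkeeping is the right one: keeping the factor $\|\mathcal L\widetilde z_{0,j}\|_\ast$ makes the $d_l$ enter as $\sum_l|d_l|/\log^2\frac1\varepsilon$. The paper's displayed chain records only $\max_k|d_k|/\log\frac1\varepsilon$. That bound would not absorb into $c|d_j|/\log\frac1\varepsilon$ without a comparison of constants.
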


\begin{proof}
Given $\phi$ solving \eqref{eq. inver 2}, we modify it so that it also verifies the extra condition in \eqref{eq. inver 1}. More precisely, we define
$$\widetilde\phi:=\phi+\sum_{j=1}^md_j\widetilde z_{0,j},$$
where $\widetilde z_{0,j}$ is as in Lemma \ref{ltildez0j} and
\begin{equation}\label{eq. inve2 9}
d_j:=-\frac{\int_{\Omega_\varepsilon}\chi_j\widetilde z_{0,j}\phi}{\int_{\Omega_\varepsilon}\chi_j\widetilde z_{0,j}^2}.
\end{equation}
We notice that, if $\left|x-\xi'_j\right|\le R$, then $\widetilde z_{0,j}=z_{0,j}$; therefore, since $R>R_0+1$, $\widetilde z_{0,j}$ equals $z_{0,j}$ in the support of $\chi_j$, hence the choice \eqref{eq. inve2 9} of $d_j$ gives
$$\int_{\Omega_\varepsilon}\chi_jz_{0,j}\widetilde\phi=\int_{\Omega_\varepsilon}\chi_j\widetilde z_{0,j}\widetilde\phi=0\qquad\qquad j=1,\dots,m;$$
on the other hand, since $z_{0,j}$ and $z_{i,k}$ are mutually orthogonal for any $i=1,2$ and $j,k=1,\dots,m$, then we also get
$$\int_{\Omega_\varepsilon}\chi_jz_{i,j}\widetilde\phi=0,\qquad\qquad i=1,2,\quad j=1,\dots,m.$$
Since
\begin{equation}\label{ltildephi}
\mathcal L\widetilde\phi=h+\sum_{j=1}^md_j\mathcal L\widetilde z_{0,j},
\end{equation}
then $\widetilde\phi$ solves \eqref{eq. inver 1} with $h+\sum_{j=1}^md_j\mathcal L\widetilde z_{0,j}$ in place of $h$ and we may apply Lemma \ref{le. inver1} and get
\begin{equation}\label{eq. inver 11}
\left\|\widetilde\phi\right\|_\infty\le C\left(\|h\|_\ast+\sum_{j=1}^m|d_j|\|\mathcal L\widetilde z_{0,j}\|_\ast\right).
\end{equation}
Therefore,
$$\|\phi\|_\infty\le\left\|\widetilde\phi\right\|_\infty+\sum_{j=1}^m|d_j|\|\widetilde z_{0,j}\|_\infty\le C\left(\|h\|_\ast+\sum_{j=1}^m|d_j|\left(1+\left\|\mathcal L\widetilde z_{0,j}\right\|_\ast\right)\right);$$
in view of the estimate for $\left\|\mathcal L\widetilde z_{0,j}\right\|_\ast$ from Lemma \ref{ltildez0j}, we suffice to show that
$$|d_j|\le C\log\left(\frac1\varepsilon\right)\|h\|_\ast.$$
By multiplying equation \eqref{ltildephi} by $\widetilde z_{0,k}$, for a fixed $k\in\{1,\dots,m\}$, and integrate, in these cases we get:
$$\int_{\Omega_\varepsilon}\widetilde\phi\mathcal L\widetilde z_{0,k}=\int_{\Omega_\varepsilon}\widetilde z_{0,k}\mathcal L\widetilde\phi=\int_{\Omega_\varepsilon}h\widetilde z_{0,k}+\sum_{j=1}^md_j\int_{\Omega_\varepsilon}\widetilde z_{0,j}\mathcal L\widetilde z_{0,k}.$$
Since $z_{0,k},z_{0,j}$ have disjoint support for $k\ne j$, then, using \eqref{eq. in ball}, we get
\begin{align*}
\left|\int_{\Omega_\varepsilon}\widetilde z_{0,j}\mathcal L\widetilde z_{0,k}\right|=&2\left|\left(\int_{\Omega_\varepsilon}\frac{k(\varepsilon y)e^{V(y)}}{|x-y|^\alpha}dy\widetilde z_{0,k}(y)\right)dy\int_{\Omega_\varepsilon}k(\varepsilon x)e^{V(x)}\widetilde z_{0,j}(x)dx\right|\\
\le&C\left(\int_{B_i}\frac1{|x-y|^\alpha}\frac1{(1+|y-\xi'_k|)^{4-\alpha}}dy\int_{B_j}\frac1{\left(1+\left|x-\xi'_j\right|\right)^{4-\alpha}}dx\right)\\
\le&C\varepsilon^\alpha.
\end{align*}
Therefore, using \eqref{eq. inver 11} and then Lemma \ref{ltildez0j} we get:
\begin{align*}
-d_j\int_{\Omega_\varepsilon}\widetilde z_{0,j}\mathcal L\widetilde z_{0,j}\le&\left|\int_{\Omega_\varepsilon}\widetilde\phi\mathcal L\widetilde z_{0,j}\right|+\left|\int_{\Omega_\varepsilon}h\widetilde z_{0,j}\right|+C\varepsilon^\alpha\max_k|d_k|\\
\le&\left\|\widetilde\phi\right\|_\infty\|\mathcal L\widetilde z_{0,j}\|_\ast+\|\widetilde z_{0,j}\|_\infty\|h\|_\ast+C\varepsilon^\alpha\max_k|d_k|\\
\le&C\left(\|h\|_\ast\left(1+\max_k\left\|\mathcal L\widetilde z_{0,k}\right\|_\ast\right)+\max_k\left(|d_k|\left\|\mathcal L\widetilde z_{0,k}\right\|_\ast\right)+\varepsilon^\alpha\max_k|d_k|\right)\\
\le&C\left(\|h\|_\ast+\frac{\max_k|d_k|}{\log\frac1\varepsilon}\right).
\end{align*}
To conclude, we need to show
$$\int_{\Omega_\varepsilon}\widetilde z_{0,j}\mathcal L\widetilde z_{0,j}\le-\frac C{\log\frac1\varepsilon}.$$
As in the proof of Lemma \ref{ltildez0j}, we write $\Omega_\varepsilon=\bigcup_{i=1}^4\Omega_{i,j}$, where $\Omega_{i,j}$ are the disjoint sets defined in \eqref{omegai,j}. We will also use some estimates from Lemma \ref{ltildez0j}.\\
\begin{itemize}
\item From \eqref{eq. inve2 4} we get
$$\left|\int_{\Omega_{1,j}}\widetilde z_{0,j}\mathcal L\widetilde z_{0,j}\right|\le C\varepsilon^{\min\{\alpha,1\}}.$$
\item Similarly, if $x\in\Omega_{4,j}$, we have
$$|\widetilde z_{0,j}|=|\eta_{2,j}||h_j||z_{0,j}|\le\frac C{\log\frac1\varepsilon};$$
therefore, from \eqref{ltildez0j4},
$$\left|\int_{\Omega_{4,j}}\widetilde z_{0,j}\mathcal L\widetilde z_{0,j}\right|\le\frac C{\log^2\frac1\varepsilon}.$$
\item In $\Omega_{3,j}$ we use \eqref{ltildez0j3} to get
$$\left|\int_{\Omega_{3,j}}\widetilde z_{0,j}\mathcal L\widetilde z_{0,j}\right|\le\frac C{\log\frac1\varepsilon}\int_{\left\{\left|x-\xi'_j\right|>R\right\}}\frac1{\left|x-\xi'_j\right|^{\min\{3,4-\alpha\}}}dx\le\frac{C_0}{R^{\min\{1,2-\alpha\}}\log\frac1\varepsilon}.$$
Here, $C_0$ is independent on $R$; therefore, we are left with showing that, in the remaining region,
\begin{equation}\label{eq. inve2 8}
\int_{\Omega_{3,j}}\widetilde z_{0,j}\mathcal L\widetilde z_{0,j}\le-\frac{C_1}{\log\frac1\varepsilon},
\end{equation}
with $C_1$ independent on $R$, and then choosing $R>\left(\frac{C_0}{C_1}\right)^\frac1{\min\{1,2-\alpha\}}$, so that $\frac{C_0}{R^{\min\{1,2-\alpha\}}}-C_1<0$.
\item In $\Omega_{2,j}$, from \eqref{lz0j} we get
\begin{equation}\label{tildezlz}
\left|\int_{\Omega_{2,j}}\widetilde z_{0,j}\mathcal Lz_{0,j}\right|\le C\varepsilon^{\min\{\alpha,1\}}.
\end{equation}
On one hand, from \eqref{diffz0j} we get
\begin{equation}\label{diffldiff}
\left|\int_{\Omega_{2,j}}(\widetilde z_{0,j}-z_{0,j})\mathcal L(\widetilde z_{0,j}-z_{0,j})\right|\le\frac C{\log^2\frac1\varepsilon}.
\end{equation}
moreover, arguing similarly as \eqref{integralestimate} we get:
\begin{align}
\nonumber&\left|\left(\int_{\Omega_\varepsilon}\frac{k(\varepsilon y)e^{V(y)}}{|x-y|^\alpha}dy\right)k(\varepsilon x)e^{V(x)}(z_{0,j}(x)-\widetilde z_{0,j}(x))\right.\\
\nonumber&\left.+\left(\int_{\Omega_\varepsilon}\frac{k(\varepsilon y)e^{V(y)}(z_{0,j}(y)-\widetilde z_{0,j}(y))}{|x-y|^\alpha}dy\right)k(\varepsilon x)e^{V(x)}\right|\\
\nonumber\le&\frac C{\log\frac1\varepsilon}\int_{\Omega_\varepsilon}\frac1{|x-y|^\alpha}\frac1{\left(1+\left|y-\xi'_j\right|\right)^{4-\alpha}}dy\frac1{\left(1+\left|x-\xi'_j\right|\right)^{4-\alpha}}\\
\nonumber\le&\frac C{\log\frac1\varepsilon}\frac1{\left(1+\left|x-\xi'_j\right|\right)^4}\\
\label{nonlocdiff}\le&\frac{C'}{R^4\log\frac1\varepsilon},
\end{align}
with $C'$ independent on $R$.\\
On the other hand, using \eqref{eta12} and the explicit expression of $h_j,z_{0,j}$ we get
$$\Delta(\widetilde z_{0,j}-z_{0,j})=2z_{0,j}\nabla\eta_{1,j}\cdot\nabla h_j-(1-h_j)z_{0,j}\Delta\eta_{1,j}+O\left(\frac1{R^3\log\frac1\varepsilon}\right);$$
therefore, integrating by parts we get
\begin{align*}
\int_{\Omega_{2,j}}z_{0,j}\Delta(\widetilde z_{0,j}-z_{0,j})=&\int_{\Omega_{2,j}}\left(2z_{0,j}^2\nabla\eta_{1,j}\cdot\nabla h_j-(1-h_j)z_{0,j}^2\Delta\eta_{1,j}\right)+O\left(\frac1{R^2\log\frac1\varepsilon}\right)\\
=&\int_{\Omega_{2,j}}\left(z_{0,j}^2\nabla\eta_{1,j}\cdot\nabla h_j+(1-h_j)z_{0,j}\nabla z_{0,j}\cdot\nabla\eta_{1,j}\right)+O\left(\frac1{R^2\log\frac1\varepsilon}\right)\\
=&\frac{2\pi}{\log\frac\tau{\varepsilon R}}\left(\frac{4-\alpha}2\right)^2\underbrace{\int_R^{R+1}\left(\frac{1-r^2}{1+r^2}\right)^2\eta_{1,j}'(r)dr}_{=:J}+O\left(\frac1{R^2\log\frac1\varepsilon}\right).
\end{align*}
We have $\log\frac\tau{\varepsilon R}\le\log\tau+\log\frac1\varepsilon$ and $J$ bounded from above by a negative constant, independently on $R>2$, because
$$J\le\left(\frac{1-R^2}{1+R^2}\right)^2\int_R^{R+1}\eta_{1,j}'\le-\frac9{25};$$
this fact, along with \eqref{tildezlz}, \eqref{diffldiff} and \eqref{nonlocdiff}, will get \eqref{eq. inve2 8} and conclude the proof, by choosing a suitably large $R$.
\end{itemize}
\end{proof}\

We are finally in position to prove the main result of this section.
\begin{proof}[Proof of Proposition \ref{prop1}]
We start with proving the a priori estimate.\\
Applying Lemma \ref{le. inver2} to the equation \eqref{eq. unique solu} we get
\begin{equation}\label{eq. prop1 1}
\|\phi\|_\infty\le C\log\left(\frac1\varepsilon\right)\left(\|h\|_\ast+\sum_{i=1}^2\sum_{j=1}^m|c_{i,j}|\right);
\end{equation}
therefore, we suffice to estimate the constants $c_{i,j}$.\\
We fix $k\in\{1,2\},l\in\{1,\dots,m\}$ and multiply equation \eqref{eq. unique solu} by $\eta_{2,l}z_{k,l}$, where $\eta_{2,l}$ is a cut-off function as in \eqref{eta12}:
$$\int_{\Omega_\varepsilon}(\mathcal L\phi)\eta_{2,l}z_{k,l}=\int_{\Omega_\varepsilon}h\eta_{2,l}z_{k,l}+\sum_{i=1}^2\sum_{j=1}^mc_{i,j}\int_{\Omega_\varepsilon}\chi_jz_{i,j}\eta_{2,l}z_{k,l}=\int_{\Omega_\varepsilon}h\eta_{2,l}z_{k,l}+c_{k,l}\int_{\Omega_\varepsilon}\chi_j\eta_{2,j}z_{k,j}^2,$$
since the cut-off functions $\chi_j,\eta_{2,l}$ have disjoint supports for $j\ne l$, and the functions $z_{1,j},z_{2,j}$ are mutually orthogonal. Therefore, since
\begin{equation}\label{integralckl}\int_{\Omega_\varepsilon}\chi_j\eta_{2,j}z_{k,j}^2=\int_{\Omega_\varepsilon}\chi_jz_{k,j}^2=(4-\alpha)^2\int_{|x|\le R}\chi(x)\frac{x_1^2}{\left(1+|x|^2\right)^2}dx
\end{equation}
is a positive constant, then
\begin{equation}\label{eq. prop1 3}
|c_{k,l}|\le C\left(\left|\int_{\Omega_\varepsilon}(\mathcal L\phi)\eta_{2,l}z_{k,l}\right|+\left|\int_{\Omega_\varepsilon}h\eta_{2,l}z_{k,l}\right|\right).
\end{equation}
The latter term can be estimated as
\begin{equation}\label{eq. prop1 4}
\left|\int_{\Omega_\varepsilon}h\eta_{2,l}z_{k,l}\right|\le C\|h\|_\ast\|\eta_{2,l}z_{k,l}\|_\infty\le C\|h\|_\ast.
\end{equation}
For the first term in \eqref{eq. prop1 3} we recall that $\mathcal L_lz_{k,l}=0$, where $\mathcal L_l$ is the limiting operator defined in \eqref{lj}; therefore, using \eqref{eta12} and the explicit expression of $z_{k,l}$ and then arguing as in \eqref{lz0j} we get:
\begin{align*}
\mathcal L(\eta_{2,l}z_{k,l})(x)=&\mathcal L(\eta_{2,l}z_{k,l})(x)-\eta_{2,l}(x)\mathcal L_lz_{k,l}(x)\\
\le&|z_{k,l}(x)||\Delta\eta_{2,l}(x)|+2|\nabla z_{k,l}(x)||\nabla\eta_{2,l}(x)|+|\eta_{2,l}(x)|(\mathcal L-\mathcal L_l)z_{k,l}(x)|\\
\le&C\left(\frac{\varepsilon^2}{1+\left|x-\xi'_l\right|}+\frac\varepsilon{\left(1+\left|x-\xi'_l\right|\right)^2}+\frac{\varepsilon^{\min\{\alpha,1\}}}{\left(1+\left|x-\xi'_l\right|\right)^{\min\{3,4-\alpha\}}}\right),
\end{align*}
hence, since $\mathcal L$ is self-adjoint,
\begin{equation}\label{eq. prop1 5}
\left|\int_{\Omega_\varepsilon}(\mathcal L\phi)\eta_{2,l}z_{k,l}\right|=\left|\int_{\Omega_\varepsilon}\phi\mathcal L(\eta_{2,l}z_{k,l})\right|\le C\varepsilon^{\min\{\alpha,1\}}\|\phi\|_\infty.
\end{equation}
Combining \eqref{eq. prop1 3}, \eqref{eq. prop1 4} and \eqref{eq. prop1 5} and then \eqref{eq. prop1 1} we get
\begin{equation}\label{ckl}
|c_{k,l}|\le C\left(\|h\|_\ast+\varepsilon^{\min\{\alpha,1\}}\|\phi\|_\infty \right)\le C\|h\|_\ast;
\end{equation}
applying again \eqref{eq. prop1 1} gives the a priori estimate \eqref{eq. prop1 7}.\\
Now we prove the existence and uniqueness. Consider the orthogonal complement of the kernel elements
\begin{equation}\label{k}
\mathbf K:=\left\{\phi\in H^1_0(\Omega_\varepsilon):\int_{\Omega_\varepsilon}\chi_jz_{i,j}\phi=0,\quad i=1,2,\,j=1,\dots,m\right\};
\end{equation}
from the definition of $\mathcal L$, for any $\psi\in\mathbf K$ one has
$$\int_{\Omega_\varepsilon}\psi\mathcal L\phi=\int_{\Omega_\varepsilon}h\psi,$$
namely
\begin{align}\label{eq. prop1 8}
\nonumber&\int_{\Omega_\varepsilon}\nabla\phi\cdot\nabla\psi-\left(\int_{\Omega_\varepsilon}\frac{k(\varepsilon y)e^{V(y)}}{|x-y|^\alpha}dy\right)\int_{\Omega_\varepsilon}k(\varepsilon x)e^{V(x)}\phi(x)\psi(x)dx\\
\nonumber&-\left(\int_{\Omega_\varepsilon}\frac{k(\varepsilon y)e^{V(y)}}{|x-y|^\alpha}dy\right)\int_{\Omega_\varepsilon}k(\varepsilon x)e^{V(x)}\phi(x)\psi(x)dx\\
=&-\int_{\Omega_\varepsilon}h\psi.
\end{align}
Therefore, \eqref{eq. unique solu} is equivalent to \eqref{eq. prop1 8} in $\mathbf K$.\\
From HLS inequality and compact Sobolev embeddings, the nonlocal terms in \eqref{eq. prop1 8} define compact operators from $H^1_0(\Omega_\varepsilon)$ to $H^{-1}_0(\Omega_\varepsilon)$; using then Riesz's representation Theorem, we can write \eqref{eq. prop1 8} as an equation of the form $\phi-\mathcal J\phi=\widetilde h$ in $\mathbf K$, for some $\widetilde h\in\mathbf K$, where $J:\mathbf K\to\mathbf K$ is compact.\\
The a priori estimate \eqref{eq. prop1 7} shows in particular that the only solution to $\phi-\mathcal J\phi=0$ is $\phi=0$. Therefore, by Fredhlom's alternative, the operator is invertible on $\mathbf K$, namely for every $h$ there exists a unique solution $\phi$ to \eqref{eq. unique solu}.
\end{proof}\

\section{The nonlinear problem}\

In this section we will solve the problem \eqref{probphi} projected on the orthogonal complement $\mathbf K$ of the kernel defined by \eqref{k}. This is equivalent to the following equation:
\begin{equation}\label{eq. projected}
\left\{\begin{array}{ll}
\mathcal L\phi=\mathcal E-\mathcal N(\phi)+\sum_{i=1}^2\sum_{j=1}^mc_{i,j}\chi_jz_{i,j}&\text{in }\Omega_\varepsilon,\\
\phi=0,&\text{on }\partial\Omega_\varepsilon,\\
\int_{\Omega_\varepsilon}\chi_jz_{i,j}\phi=0,&i=1,2,\,j=1,\dots,m.
\end{array}\right.
\end{equation}
To this purpose, we use the linear theory established by Proposition \ref{prop1} and a contraction mapping argument.\\

We first need some basic estimate concerning the nonlinear term $\mathcal N(\phi)$:

\begin{lemma}\label{le. estimate of N}
There exists $C>0$ such that, for any $\varepsilon>0$ we have
\begin{align*}
\|\mathcal N(\phi)\|_\ast\le&C\|\phi\|^2_\infty e^{2\|\phi\|_\infty},\\
\|\mathcal N(\phi_1)-\mathcal N(\phi_2)\|_\ast\le&C(\|\phi_1\|_\infty+\|\phi_2\|_\infty)e^{2(\|\phi_1\|_\infty+\|\phi_2\|_\infty)}\|\phi_1-\phi_2\|_\infty.
\end{align*}
\end{lemma}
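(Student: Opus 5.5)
The plan is to write $\mathcal N(\phi)$ as a second-order Taylor remainder of the bilinear nonlocal map and then bound each piece in $\|\cdot\|_\ast$ using only the weight estimate \eqref{eq. out ball} and the convolution bound already obtained in \eqref{integralestimate}.

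Set $f:=k(\varepsilon\cdot)e^{V}$ and $A[g](x):=\int_{\Omega_\varepsilon}\frac{g(y)}{|x-y|^\alpha}dy$. Then
$$\mathcal N(\phi)=A[f e^\phi]fe^\phi-A[f]f-A[f]f\phi-A[f\phi]f.$$
Writing $e^\phi=1+\phi+r(\phi)$ with $|r(\phi)|\le\frac12\phi^2e^{|\phi|}$, expand the product. The terms of order zero and one cancel exactly, leaving
$$\mathcal N(\phi)=A[f]f\,r(\phi)+A[fr(\phi)]f+A[f(\phi+r(\phi))]f(\phi+r(\phi)).$$

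Each term has the form $A[fa]fb$ with $\|a\|_\infty\|b\|_\infty\le C\|\phi\|_\infty^2e^{2\|\phi\|_\infty}$. For instance, $|\phi+r(\phi)|=|e^\phi-1|\le\|\phi\|_\infty e^{\|\phi\|_\infty}$, and $|r(\phi)|\le\|\phi\|_\infty^2e^{\|\phi\|_\infty}$ when paired with a bounded function. So it suffices to show
$$|A[fa]fb|(x)\le C\|a\|_\infty\|b\|_\infty\sum_j(1+|x-\xi'_j|)^{-4}.$$
This is exactly the computation in \eqref{integralestimate}. By \eqref{eq. out ball}, $f\le C\sum_j(1+|y-\xi'_j|)^{-4-\alpha+\alpha}$, and splitting the $y$-integral into $|x-y|\le\frac{1+|x-\xi'_j|}2$ and its complement gives $A[f]\le C\sum_j(1+|x-\xi'_j|)^{-\alpha}$. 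Multiplying by $f(x)$ then yields the decay $(1+|x-\xi'_j|)^{-4}$. Since $4\ge\min\{3,4-\alpha\}$, this weight is dominated by the weight in \eqref{norm}, and the first estimate follows.

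For the Lipschitz estimate, the cleanest route is the fundamental theorem of calculus. Write
$$\mathcal N(\phi_1)-\mathcal N(\phi_2)=\int_0^1 D\mathcal N(\phi_t)[\phi_1-\phi_2]\,dt,\qquad \phi_t=t\phi_1+(1-t)\phi_2,$$
where
$$D\mathcal N(\phi)[\psi]=A[fe^\phi\psi]fe^\phi+A[fe^\phi]fe^\phi\psi-A[f]f\psi-A[f\psi]f.$$
Regrouping gives
$$D\mathcal N(\phi)[\psi]=A[f(e^\phi-1)\psi]fe^\phi+A[f\psi]f(e^\phi-1)+A[f(e^\phi-1)]fe^\phi\psi+A[f]f(e^\phi-1)\psi.$$
Every term contains a factor $e^\phi-1$, bounded by $\|\phi\|_\infty e^{\|\phi\|_\infty}$, and at most one further factor $e^{\phi}$. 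Using $\|\phi_t\|_\infty\le\|\phi_1\|_\infty+\|\phi_2\|_\infty$ and the same convolution bound as above, the claimed estimate follows.

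The only genuinely nontrivial step is the uniform convolution bound $A[f](x)f(x)\lesssim\sum_j(1+|x-\xi'_j|)^{-4}$, independent of $\varepsilon$. It requires that $\int_{\Omega_\varepsilon}f$ be bounded: the bubble mass is finite, and the tail region $B_0$ contributes $O(\varepsilon^{4-\alpha}\cdot\varepsilon^{-2})=O(\varepsilon^{2-\alpha})$. This is already verified in \eqref{integralestimate}, so I expect no real obstacle beyond careful bookkeeping of the exponential factors.
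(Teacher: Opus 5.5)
Your argument is correct and is essentially the paper's: a pointwise bound on the second-order remainder of the exponential (the paper applies Taylor directly to $e^{s}-1-s$ with $s=\phi(x)+\phi(y)$ inside the double integral, while you expand each factor separately), followed by the convolution bound \eqref{integralestimate}; note that the weight on $f$ should read $(1+|y-\xi'_j|)^{-(4-\alpha)}$, not your exponent $-4-\alpha+\alpha$. For the Lipschitz estimate, your mean-value argument via $D\mathcal N(\phi_t)$ is actually cleaner than the paper's: its intermediate inequality in terms of $\left|(\phi_1(x)+\phi_1(y))^2-(\phi_2(x)+\phi_2(y))^2\right|$ fails when the two sums have opposite signs (e.g.\ $\phi_1\equiv a/2$, $\phi_2\equiv-a/2$), even though its final bound is right.
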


\begin{proof}
From elementary inequalities we get
$$\left|e^{\phi(x)}e^{\phi(y)}-1-\phi(x)-\phi(y)\right|\le\frac{(\phi(x)+\phi(y))^2}2e^{|\phi(x)+\phi(y)|}\le2\|\phi\|_\infty^2e^{2\|\phi\|_\infty},$$
therefore, arguing as in \eqref{integralestimate}, one gets
\begin{align*}
\mathcal N(\phi)(x)=&O\left(\|\phi\|^2_\infty e^{2\|\phi\|_\infty}\sum_{j=1}^m\int_{\Omega_\varepsilon}\frac1{|x-y|^\alpha}\frac1{\left(1+\left|y-\xi'_j\right|\right)^{4-\alpha}}dy\frac1{\left(1+\left|x-\xi'_j\right|\right)^{4-\alpha}}\right)\\
=&O\left(\|\phi\|^2_\infty e^{2\|\phi\|_\infty}\frac1{\left(1+\left|x-\xi'_j\right|\right)^4}\right)
\end{align*}
which proves the first statement.\\
The second statement is proved similarly, using the elementary estimate
\begin{align*}
&\left|e^{\phi_1(x)}e^{\phi_1(y)}-\phi_1(x)-\phi_1(y)-e^{\phi_2(x)}e^{\phi_2(y)}-\phi_2(x)-\phi_2(y)\right|\\
\le&\frac{\left|(\phi_1(x)+\phi_1(y))^2-(\phi_2(x)+\phi_2(y))^2\right|}2e^{|\phi_1(x)+\phi_1(y)|+|\phi_2(x)+\phi_2(y)|}\\
\le&2(\|\phi_1\|_\infty+\|\phi_2\|_\infty)e^{2(\|\phi_1\|_\infty+\|\phi_2\|_\infty)}\|\phi_1-\phi_2\|_\infty.
\end{align*}
\end{proof}\

The main result of this section is the following.

\begin{proposition}\label{le. unique solu}
There exists $M>0$ such that \eqref{eq. projected} has a unique solution $\phi$ satisfying
$$\|\phi\|_\infty\le M\varepsilon^{\min\{\alpha,1\}}\log\frac1\varepsilon.$$
\end{proposition}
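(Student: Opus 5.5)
We plan to recast \eqref{eq. projected} as a fixed point problem and apply the contraction mapping principle in a small ball of $L^\infty(\Omega_\varepsilon)\cap\mathbf K$. By Proposition \ref{prop1}, for every $h$ with $\|h\|_\ast<\infty$ there is a unique solution of \eqref{eq. unique solu}. We denote it $\phi=T(h)$, with the constants $c_{i,j}$ determined by $h$. The map $T$ is linear in $h$, and by \eqref{eq. prop1 7} it satisfies
$$\|T(h)\|_\infty\le C_1\log\left(\frac1\varepsilon\right)\|h\|_\ast .$$
Then $\phi$ solves \eqref{eq. projected} if and only if
$$\phi=\mathcal A(\phi):=T\left(\mathcal E-\mathcal N(\phi)\right).$$
We work in the closed set
$$\mathcal F_M:=\left\{\phi\in C\left(\overline{\Omega_\varepsilon}\right)\cap\mathbf K:\ \|\phi\|_\infty\le M\varepsilon^{\min\{\alpha,1\}}\log\frac1\varepsilon\right\},$$
which is complete under $\|\cdot\|_\infty$.

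\emph{Step 1: $\mathcal A$ maps $\mathcal F_M$ into itself.} For $\phi\in\mathcal F_M$ we have $\|\phi\|_\infty\to0$, so $e^{2\|\phi\|_\infty}\le2$ for small $\varepsilon$. Proposition \ref{le. estimate of E} and Lemma \ref{le. estimate of N} then give
$$\|\mathcal A(\phi)\|_\infty\le C_1\log\frac1\varepsilon\left(C\varepsilon^{\min\{\alpha,1\}}+2CM^2\varepsilon^{2\min\{\alpha,1\}}\log^2\frac1\varepsilon\right).$$
Choose $M:=2C_1C$, where $C$ is the constant in Proposition \ref{le. estimate of E}. The first term is then at most $\frac M2\varepsilon^{\min\{\alpha,1\}}\log\frac1\varepsilon$. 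The second term is $O\left(\varepsilon^{2\min\{\alpha,1\}}\log^3\frac1\varepsilon\right)$, which is $o\left(\varepsilon^{\min\{\alpha,1\}}\log\frac1\varepsilon\right)$. Hence $\mathcal A(\mathcal F_M)\subset\mathcal F_M$ for $\varepsilon$ small. The orthogonality conditions hold automatically, since $T(h)\in\mathbf K$ by construction.

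\emph{Step 2: $\mathcal A$ is a contraction.} By linearity of $T$ and the second estimate of Lemma \ref{le. estimate of N},
$$\|\mathcal A(\phi_1)-\mathcal A(\phi_2)\|_\infty\le C_1\log\frac1\varepsilon\,\|\mathcal N(\phi_1)-\mathcal N(\phi_2)\|_\ast\le C'M\varepsilon^{\min\{\alpha,1\}}\log^2\frac1\varepsilon\,\|\phi_1-\phi_2\|_\infty .$$
The factor in front tends to zero, so it is below $\frac12$ for small $\varepsilon$. Banach's fixed point theorem then gives existence and uniqueness of $\phi$ in $\mathcal F_M$, with the stated bound. If desired, we also record $|c_{i,j}|\le C\|\mathcal E-\mathcal N(\phi)\|_\ast$ from \eqref{ckl}, for use in the reduction.

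\emph{Main obstacle.} Everything hinges on the logarithmic loss in Proposition \ref{prop1} being beaten by the polynomial smallness of $\mathcal E$ and by the quadratic character of $\mathcal N$. In particular, the term $\varepsilon^{\min\{\alpha,1\}}\log^2\frac1\varepsilon$ in the contraction constant must tend to zero, which it does because $\alpha>0$. A second technical point is that $T$ must be linear, so that differences of $\mathcal N$ transfer directly to $\mathcal A$; this follows from the uniqueness in Proposition \ref{prop1}. One should also check that Lemma \ref{le. estimate of N} is applied with $\|\phi\|_\infty\le1$, so that its exponential factors are uniformly bounded.

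If one additionally wants $\phi$ to depend continuously, or $C^1$, on $\xi$ for the later reduction, we would argue through the implicit function theorem applied to $\phi-\mathcal A(\phi)=0$. This is not needed for the statement at hand.
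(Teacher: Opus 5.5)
Your proposal is correct and follows the paper's proof essentially verbatim: the same fixed-point reformulation $\phi=\mathcal T(\mathcal E-\mathcal N(\phi))$, the same ball of radius $M\varepsilon^{\min\{\alpha,1\}}\log\frac1\varepsilon$ with $M$ twice the product of the constants, and the same contraction factor of order $M\varepsilon^{\min\{\alpha,1\}}\log^2\frac1\varepsilon$. One technical point needs adjusting. Uniform limits of $H^1_0$ functions need not lie in $H^1_0$, so your set is not obviously complete. To fix this, either impose only the orthogonality conditions (which are closed under uniform convergence), or work in the whole $L^\infty$ ball as the paper does, since the fixed point lies in the range of $\mathcal T$ anyway.
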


\begin{proof}
Let $\mathcal T$ be the operator given by Proposition \ref{prop1}, namely such that $\mathcal Th$ is the unique solution $\phi$ of problem \eqref{eq. projected}; in other words, $\mathcal T$ is the inverse of $\mathcal L$ on the orthogonal complement of the kernel $\mathbf K$ given by \eqref{k}. Problem \eqref{eq. projected} is equivalent to the equation
$$\phi=\mathcal T(\mathcal E-\mathcal N(\phi)):=\mathcal A(\phi).$$
We prove that $\mathcal A$ is a contraction mapping on the ball
$$\mathbf B_M:=\left\{\phi\in L^\infty(\Omega_\varepsilon):\|\phi\|_\infty\le M\varepsilon^{\min\{\alpha,1\}}\log\frac1\varepsilon\right\},$$
provided $M$ is chosen large enough.\\
On one hand, using Proposition \ref{le. estimate of E} and \ref{prop1} and Lemma \ref{le. estimate of N}, for any $\phi\in\mathbf B_M$ we get
\begin{align*}
\|\mathcal A(\phi)\|_\infty\le&C\log\left(\frac1\varepsilon\right)(\|\mathcal E\|_\ast+\|\mathcal N (\phi)\|_\ast)\\
\le&C\log\left(\frac1\varepsilon\right)\left(\varepsilon^{\min\{\alpha,1\}}+\|\phi\|_\infty^2\right)\\
\le&C\varepsilon^{\min\{\alpha,1\}}\log\left(\frac1\varepsilon\right)\left(1+M^2\varepsilon^{\min\{\alpha,1\}}\log^2\frac1\varepsilon\right)\\
\le&M\varepsilon^{\min\{\alpha,1\}}\log\frac1\varepsilon,
\end{align*}
by choosing $M=2C$ and $\varepsilon$ small enough.\\
On the other hand, for any $\phi_1,\phi_2\in\mathbf B_M$, Lemma \ref{le. estimate of N} and Proposition \ref{prop1} yield
\begin{align*}
\|\mathcal A(\phi_1)-\mathcal A(\phi_2)\|_\infty\le&C\log\left(\frac1\varepsilon\right)\|\mathcal N(\phi_1)-\mathcal N(\phi_2)\|_\ast\\
\le&C\log\left(\frac1\varepsilon\right)(\|\phi_1\|_\infty+\|\phi_1\|_\infty)\|\phi_1-\phi_2\|_\infty\\
\le&CM\varepsilon^{\min\{\alpha,1\}}\log^2\left(\frac1\varepsilon\right)\|\phi_1-\phi_2\|_\infty\\
\le&\frac12\|\phi_1-\phi_2\|_\infty,
\end{align*}
for small $\varepsilon$.\\
Therefore the map $\mathcal A$ is a contraction map on $\mathbf B_M$, hence, by Banach's fixed point theorem, there exists a unique $\phi\in\mathbf B_M$ such that $\phi=\mathcal A(\phi)$.
\end{proof}\

\section{Finite-dimensional reduction}\

In the final section we will give conditions on $\xi$ in order to get $c_{i,j}=0$ in problem \eqref{eq. projected}, hence a true solution to \eqref{probphi} and \eqref{eq. original problem}.\\
We basically need to multiply equation \eqref{eq. projected} by each $z_{i,j}$ and integrate.

\begin{lemma}\label{le. c=0}
Let be $\phi$ the unique solution to problem \eqref{eq. projected} and, as in \eqref{zij},
$$z_{i,j}(x):=(4-\alpha)\mu_j\frac{x_i-\xi_{i,j}}{\mu_j^2+\left|x-\xi_j\right|^2}.$$
If
\begin{equation}\label{integralcij}
\int_{\Omega_\varepsilon}(\mathcal E-\mathcal L\phi-\mathcal N(\phi))z_{i,j}=0\qquad\forall\,i=1,2,\,j=1,\dots,m,
\end{equation}
then
$$c_{i,j}=0,\quad\forall\,i=1,2,\,j=1,\dots,m.$$
\end{lemma}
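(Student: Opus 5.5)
Since $\phi$ solves \eqref{eq. projected}, we have $\mathcal E-\mathcal L\phi-\mathcal N(\phi)=-\sum_{i,j}c_{i,j}\chi_jz_{i,j}$ in $\Omega_\varepsilon$. Testing against $z_{k,l}$ and using hypothesis \eqref{integralcij} turns the statement into the linear system
$$\sum_{i=1}^2\sum_{j=1}^mc_{i,j}\int_{\Omega_\varepsilon}\chi_jz_{i,j}z_{k,l}=0,\qquad k=1,2,\;l=1,\dots,m.$$
So the whole proof reduces to showing that the $2m\times2m$ matrix $M_{(k,l),(i,j)}:=\int_{\Omega_\varepsilon}\chi_jz_{i,j}z_{k,l}$ is invertible for $\varepsilon$ small.

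For the diagonal blocks, with $j=l$, I translate to $\xi'_j$ and rescale by $\mu_j$, since $\chi_j$ has support in the ball of radius $R_0+1$ around $\xi'_j$. The radial symmetry of $\chi$ together with the oddness of $x_1,x_2$ and of $x_1x_2$ gives $\int\chi_jz_{1,j}z_{2,j}=0$. The diagonal entries $\int\chi_jz_{k,j}^2$ equal the positive constant $c_0$ from \eqref{integralckl}, which is independent of $j$ and $\varepsilon$ up to the factor $\mu_j^2$, and $\mu_j$ is bounded above and below. Hence each $2\times2$ diagonal block is $c_0$ times the identity (up to bounded positive factors).

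For the off-diagonal blocks, with $j\ne l$, the support of $\chi_j$ lies at distance at least $3\tau/\varepsilon-R_0-1$ from $\xi'_l$. On that set $|z_{k,l}|\le C\varepsilon$, because $|w_k(y)|\le C/(1+|y|)$. This gives $|M_{(k,l),(i,j)}|\le C\varepsilon$. Therefore $M=D+O(\varepsilon)$ with $D$ diagonal and uniformly invertible, so $M$ is invertible for $\varepsilon$ small and $c_{i,j}=0$.

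The main obstacle is only bookkeeping. The definition of $z_{i,j}$ in the statement uses $\xi_j$, which should be read as $\xi'_j$ in the rescaled variables, so the off-diagonal decay must be checked in those variables. One also has to confirm that all integrals are finite, which holds since $\chi_j$ is compactly supported. If exact vanishing of an integral were needed elsewhere, one could alternatively test against $\eta_{2,l}z_{k,l}$ as in the proof of Proposition \ref{prop1}. Then $\int\chi_jz_{i,j}\eta_{2,l}z_{k,l}$ vanishes exactly for $j\ne l$ by disjointness of supports, and the system becomes block-diagonal. This, however, requires \eqref{integralcij} to be read with the cut-off, so I would use the perturbative invertibility argument above.
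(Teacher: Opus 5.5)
Your argument is correct and follows the same route as the paper: test \eqref{eq. projected} against $z_{k,l}$, use \eqref{integralcij}, and conclude from the positivity of $\int_{\Omega_\varepsilon}\chi_l z_{k,l}^2$ given by \eqref{integralckl}. The one difference is that the paper asserts $\int_{\Omega_\varepsilon}\chi_j z_{i,j}z_{k,l}=0$ for every $(i,j)\ne(k,l)$, which is exact only when $j=l$ (by parity), whereas for $j\ne l$ these cross terms are small but generically nonzero, so your perturbative invertibility of the $2m\times2m$ matrix is what actually makes that step rigorous.
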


\begin{proof} We fix $k\in\{1,2\},l\in\{1,\dots,m\}$, multiply \eqref{eq. projected} by $z_{k,l}$ and integrate. Since $z_{k,l}$ and $\chi_jz_{i,j}$ are orthogonal if $(k,l)\ne(i,j)$, then
$$\int_{\Omega_\varepsilon}(\mathcal E-\mathcal L\phi-\mathcal N(\phi))z_{k,l}=\sum_{i=1}^2\sum_{j=1}^mc_{i,j}\int_{\Omega_\varepsilon}\chi_jz_{i,j}z_{k,l}=c_{k,l}\int_{\Omega_\varepsilon}\chi_lz_{k,l}^2.$$
From \eqref{integralckl} we deduce that if the left-hand side is $0$ one must have $c_{k,l}=0$.
\end{proof}\

Next, we show that the contributions from $\mathcal L\phi$ and $\mathcal N(\phi)$ in \eqref{integralcij} are negligible, provided $\alpha$ is in the range stated in Theorem \ref{th.1}.

\begin{lemma}\label{estimateLN}
Let $\phi$ be the unique solution to problem \eqref{eq. projected}. Then, for any $i=1,2,j=1,\dots,m$,
$$\left|\int_{\Omega_\varepsilon}(\mathcal L\phi)z_{i,j}\right|+\left|\int_{\Omega_\varepsilon}\mathcal N(\phi)z_{i,j}\right|=O\left(\varepsilon^{\min\{2\alpha,2\}}\log^2\frac1\varepsilon\right).$$
\end{lemma}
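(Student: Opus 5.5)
The two terms will be handled separately, using the bound $\|\phi\|_\infty\le M\varepsilon^{\min\{\alpha,1\}}\log\frac1\varepsilon$ from Proposition \ref{le. unique solu}.

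\emph{The nonlinear term.} Since $z_{i,j}$ is uniformly bounded, the estimate $\left|\int_{\Omega_\varepsilon}fg\right|\le C\|f\|_\ast\|g\|_\infty$ together with Lemma \ref{le. estimate of N} gives
$$\left|\int_{\Omega_\varepsilon}\mathcal N(\phi)z_{i,j}\right|\le C\|\mathcal N(\phi)\|_\ast\le C\|\phi\|_\infty^2e^{2\|\phi\|_\infty}\le C\varepsilon^{\min\{2\alpha,2\}}\log^2\frac1\varepsilon ,$$
because $\|\phi\|_\infty\to0$. This term is immediate.

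\emph{The linear term.} I will use the self-adjointness of $\mathcal L$, as in \eqref{eq. prop1 5}. The function $z_{i,j}$ does not vanish on $\partial\Omega_\varepsilon$; there it is only $O(\varepsilon)$. Integrating by parts therefore gives
$$\int_{\Omega_\varepsilon}(\mathcal L\phi)z_{i,j}=\int_{\Omega_\varepsilon}\phi\,\mathcal Lz_{i,j}+\int_{\partial\Omega_\varepsilon}z_{i,j}\partial_\nu\phi .$$
To avoid estimating $\partial_\nu\phi$, I will first replace $z_{i,j}$ by $\eta_{2,j}z_{i,j}$. This is harmless: on the region where $\eta_{2,j}\ne1$ we have $|x-\xi'_j|\gtrsim\frac1\varepsilon$, so the difference is controlled by a weighted tail estimate. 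Applying \eqref{eq. prop1 5} then gives
$$\left|\int_{\Omega_\varepsilon}\phi\,\mathcal L(\eta_{2,j}z_{i,j})\right|\le C\varepsilon^{\min\{\alpha,1\}}\|\phi\|_\infty\le C\varepsilon^{\min\{2\alpha,2\}}\log\frac1\varepsilon .$$
The pointwise bound on $\mathcal L(\eta_{2,j}z_{i,j})$ comes from $\mathcal L_jz_{i,j}=0$ and the comparison $(\mathcal L-\mathcal L_j)z_{i,j}$ carried out as in \eqref{lz0j}, whose integral against the weight is $O(\varepsilon^{\min\{\alpha,1\}})$.

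\emph{The remainder.} It is $\int(\mathcal L\phi)(1-\eta_{2,j})z_{i,j}$. I will write $\mathcal L\phi=\mathcal E-\mathcal N(\phi)+\sum c_{k,l}\chi_lz_{k,l}$ from \eqref{eq. projected}. On the support of $1-\eta_{2,j}$ the functions $\chi_l$ vanish, $|z_{i,j}|\le C\varepsilon$, and $\mathcal E$ and $\mathcal N(\phi)$ are bounded pointwise by $\varepsilon^{\min\{\alpha,1\}}$ (respectively $\|\phi\|_\infty^2$) times the weight. That weight is at most $\varepsilon^{\min\{3,4-\alpha\}}+\varepsilon^2$ there and lives on an area $O(\varepsilon^{-2})$. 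Its integral is thus $O(1)$, and the remainder is $O(\varepsilon^{1+\min\{\alpha,1\}})$, which is better than needed.

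\emph{Main obstacle.} The key point is that the pointwise bound on $\mathcal L(\eta_{2,j}z_{i,j})$ is integrable with only an $O(\varepsilon^{\min\{\alpha,1\}})$ loss, including the cross-interactions with the other bubbles $B_k$. The contribution $\varepsilon^\alpha$ from \eqref{knotj2} is exactly where the nonlocal interaction yields only $\varepsilon^\alpha$, so the final estimate is $\varepsilon^{\min\{2\alpha,2\}}\log^2\frac1\varepsilon$.
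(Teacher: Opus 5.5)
Your argument is essentially the paper's. The nonlinear term is handled through $\|\mathcal N(\phi)\|_\ast=O(\|\phi\|_\infty^2)$. The linear term is handled by splitting $z_{i,j}=\eta_{2,j}z_{i,j}+(1-\eta_{2,j})z_{i,j}$: \eqref{eq. prop1 5} is applied to the first piece, and the equation \eqref{eq. projected} together with $|z_{i,j}|\le C\varepsilon$ is used on the second. Your remark about the boundary term is exactly why the cut-off is needed.

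There is one slip in your remainder estimate when $m\ge2$. The set $\{|x-\xi'_j|\ge\frac\tau{4\varepsilon}\}$, where $1-\eta_{2,j}\neq0$, contains the other concentration points $\xi'_l$, $l\ne j$. So the cut-offs $\chi_l$ with $l\ne j$ do not vanish there. Likewise, the weight in $\|\cdot\|_\ast$ is not bounded by $\varepsilon^{\min\{3,4-\alpha\}}+\varepsilon^2$ there, since it is of order $1$ near each $\xi'_l$.

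The conclusion still holds, with two corrections:
\begin{itemize}
\item The weight has integral $O(1)$ over all of $\Omega_\varepsilon$, so the $\mathcal E$ and $\mathcal N(\phi)$ contributions are still $O(\varepsilon)$ times their $\|\cdot\|_\ast$-norms.
\item The terms $c_{k,l}\chi_lz_{k,l}$ with $l\ne j$ must be kept rather than discarded. They are controlled by \eqref{ckl}, namely $|c_{k,l}|\le C\|\mathcal E-\mathcal N(\phi)\|_\ast$, together with $|z_{i,j}|\le C\varepsilon$ on the support of $\chi_l$.
\end{itemize}
This is precisely what the paper does. It bounds the remainder globally by $\|\mathcal L\phi\|_\ast\|(1-\eta_{2,j})z_{i,j}\|_\infty$, with $\|\mathcal L\phi\|_\ast\le\|\mathcal E\|_\ast+\|\mathcal N(\phi)\|_\ast+\sum|c_{k,l}|\|\chi_lz_{k,l}\|_\ast$. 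This gives $O\left(\varepsilon^{1+\min\{\alpha,1\}}\log\frac1\varepsilon\right)$, which is within the claimed bound.
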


\begin{proof}
We start with the second term. From Lemma \ref{le. estimate of N} and Proposition \ref{le. unique solu} we get
$$\|\mathcal N(\phi)\|_\ast=O\left(\|\phi\|_\infty^2\right)=O\left(\varepsilon^{\min\{2\alpha,2\}}\log^2\frac1\varepsilon\right),$$
hence
\begin{equation}\label{estimateN}
\left|\int_{\Omega_\varepsilon}\mathcal N(\phi)z_{i,j}\right|\le C\|\mathcal N(\phi)\|_\ast\|z_{i,j}\|_\infty\le C\varepsilon^{\min\{2\alpha,2\}}\log^2\frac1\varepsilon.
\end{equation}
In order to handle the first term, we split $z_{i,j}=(1-\eta_{2,j})z_{i,j}+\eta_{2,j}z_{i,j}$, where $\eta_{2,j}$ is as in \eqref{eta12}.\\
From \eqref{eq. prop1 5}, we can estimate the first term as
$$\left|\int_{\Omega_\varepsilon}(\mathcal L\phi)\eta_{2,j}z_{i,j}\right|\le C\varepsilon^{\min\{\alpha,1\}}\|\phi\|_\infty\le C\varepsilon^{\min\{2\alpha,2\}}\log\frac1\varepsilon.$$
Moreover, since $0\le1-\eta_{2,j}\le\chi_{\left\{\left|x-\xi'_j\right|\ge\frac\tau{3\varepsilon}\right\}}(x)$, then
$$\|(1-\eta_{2,j})z_{i,j}\|_\infty\le\sup_{\left\{\left|x-\xi'_j\right|\ge\frac\tau{3\varepsilon}\right\}}|z_{i,j}(x)|\le C\varepsilon;$$
finally, since $\phi$ solves \eqref{eq. projected}, then, from \eqref{ckl},
$$\|\mathcal L\phi\|_\ast\le\|\mathcal E\|_\ast+\|\mathcal N(\phi)\|_\ast+\sum_{k=1}^2\sum_{l=1}^k|c_{k,l}|\|\chi_jz_{i,j}\|_\ast\le C\left(\|\mathcal E\|_\ast+\|\mathcal N(\phi)\|_\ast\right)\le C\left(\varepsilon^{\min\{\alpha,1\}}\log\frac1\varepsilon\right),$$
hence
$$\left|\int_{\Omega_\varepsilon}(\mathcal L\phi)z_{i,j}\right|\le\left|\int_{\Omega_\varepsilon}(\mathcal L\phi)\eta_{2,j}z_{i,j}\right|+\|\mathcal L\phi\|_\ast\|(1-\eta_{2,j})z_{i,j}\|_\infty\le C\varepsilon^{\min\{2\alpha,2\}}\log\frac1\varepsilon,$$
which, together with \eqref{estimateN}, completes the proof.
\end{proof}\

We then need the fundamental expansion of the energy functional, which highlights the role of the reduce functional $\Phi_m$:

\begin{proposition}\label{mainorder}
The following estimate holds true, for any $i=1,2,\,j=1,\dots,m$:
\begin{align*}
&\int_{\Omega_\varepsilon}\mathcal Ez_{i,j}\\
=&\left(-2(4-\alpha)\pi\frac{\partial_{\xi_i}k(\xi_j)}{k(\xi_j)}-\frac{(4-\alpha)^2}4\pi\partial_{\xi_{i,j}}\left(\sum_{k=1}^mH(\xi_k,\xi_k)+\sum_{k\ne l}G(\xi_k,\xi_l)\right)\right)\varepsilon+O\left(\varepsilon^{\min\{1+\alpha,2\}}\log^2\frac1\varepsilon\right)\\
=&\left(-\frac{(4-\alpha)^2\pi}4\partial_{\xi_{i,j}}\Phi_m(\xi)\right)\varepsilon+O\left(\varepsilon^{\min\{1+\alpha,2\}}\log^2\frac1\varepsilon\right),
\end{align*}
where $\Phi_m$ is the reduced functional given by \eqref{phi}.
\end{proposition}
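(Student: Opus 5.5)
We localize around each peak and Taylor expand. Write $F(v):=\left(\int_{\Omega_\varepsilon}\frac{k(\varepsilon y)e^{v(y)}}{|x-y|^\alpha}dy\right)k(\varepsilon x)e^{v(x)}$, so that $\mathcal E=\sum_k\left(\int_{\Omega_\varepsilon}\frac{e^{w_k}}{|x-y|^\alpha}\right)e^{w_k}-F(V)$. The first group of terms is handled by invariance: since $z_{i,j}=\partial_{\xi_i}w_j$ (up to the factor $\mu_j$) and $w_j$ solves the translation-invariant problem \eqref{eq. limit problem}, we have
$$\int_{\mathbb R^2}\left(\int_{\mathbb R^2}\frac{e^{w_j(y)}}{|x-y|^\alpha}dy\right)e^{w_j(x)}z_{i,j}(x)dx=\frac{\mu_j}2\partial_{\xi_i}\iint\frac{e^{w_j(x)}e^{w_j(y)}}{|x-y|^\alpha}=0.$$
Cutting the integrals to $\Omega_\varepsilon$ only costs $O(\varepsilon^2)$, because the tails are of size $|x|^{-(4-\alpha)}$ against $|z_{i,j}|\lesssim|x|^{-1}$. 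The cross terms $k\neq j$ are estimated as in \eqref{knotj1} and, after integration against the bounded function $z_{i,j}$, are $O(\varepsilon^{\min\{1+\alpha,2\}})$. Hence
$$\int\mathcal Ez_{i,j}=-\int F(V)z_{i,j}+O\left(\varepsilon^{\min\{1+\alpha,2\}}\right).$$

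For the main term we work in $B_j$, where by \eqref{eq. in ball} we can refine to first order. Set
$$k(\varepsilon y)e^{V(y)}=e^{w_j(y)}\left(1+\varepsilon\,\nabla\Theta_j(\xi_j)\cdot(y-\xi'_j)+O\left(\varepsilon^2|y-\xi'_j|^2\right)\right),$$
where $\Theta_j(x):=\log\frac{k(x)}{k(\xi_j)}+\frac{4-\alpha}4\left(H(x,\xi_j)+\sum_{k\ne j}G(x,\xi_k)\right)$. The correction $O(\mu_j^2\varepsilon^2)$ from \eqref{eq.estimate of mu} is harmless, and the choice \eqref{eq. definition of mu} ensures there is no zeroth-order discrepancy. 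Substituting this into $F(V)$ on $B_j\times B_j$ gives two linear terms: one from $x$ in the factor $e^{V(x)}$, and one from $y$ inside the convolution. We pair them against $z_{i,j}$, which is odd in $x_i-\xi'_{i,j}$. By symmetry of the kernel, the first-order part reduces to
$$\varepsilon\,\partial_i\Theta_j(\xi_j)\iint\frac{e^{w_j(x)}e^{w_j(y)}}{|x-y|^\alpha}\left[(x-\xi'_j)_i+(y-\xi'_j)_i\right]z_{i,j}(x).$$
This is an explicit constant times $\partial_i\Theta_j(\xi_j)$. The constant can be computed via the identity $\partial_{x_i}$ of the bubble equation, or by noting that $(x-\xi'_j)_i$ relates to $z_{i,j}$ and the mass identity $\iint\frac{e^{w}e^{w}}{|x-y|^\alpha}=(4-\alpha)\cdot2\pi\cdot$const from the Pohozaev-type normalization with total mass $8\pi$ (the equation has $-\Delta w\sim$ mass giving $w\sim-(4-\alpha)\log|x|$, i.e. mass $2\pi(4-\alpha)$). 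Matching with the claimed coefficients $2(4-\alpha)\pi$ for $\partial k/k$ and $\frac{(4-\alpha)^2}{4}\pi$ for the Green's part (note $\frac{4-\alpha}4$ times the same $2(4-\alpha)\pi$ gives this, up to the factor $\tfrac12$ coming from $\partial_{\xi_j}H(\xi_j,\xi_j)=2\partial_xH(x,\xi_j)|_{x=\xi_j}$ and the symmetric double-counting of $G$ in $\sum_{k\ne l}$), the expression becomes $-\frac{(4-\alpha)^2\pi}4\partial_{\xi_{i,j}}\Phi_m\,\varepsilon$.

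The remaining errors need bounding. The quadratic Taylor remainder contributes $\varepsilon^2\iint\frac{|y|^2e^{w}e^{w}}{|x-y|^\alpha}|z|$; the weight $|y|^2(1+|y|)^{-(4-\alpha)}$ integrated up to radius $\tau/\varepsilon$ gives $\varepsilon^{-\alpha}$, so this term is $O(\varepsilon^{2-\alpha}+\varepsilon^2\log)$. The quantity $\varepsilon^{2-\alpha}$ is not below $\varepsilon^{\min\{1+\alpha,2\}}$ in general, so this is the delicate point. I would handle it by keeping the exact $k(\varepsilon y)e^{V(y)}$ in the convolution factor for large $|y|$ and using the cancellation from $\int F\,z$ being odd. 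Alternatively, one can note that only the $x$-localized part, where $z_{i,j}e^{w_j(x)}$ decays like $|x|^{-(5-\alpha)}$, matters, and that the far $y$-region contributes $\varepsilon^{\alpha}\cdot\varepsilon$ via $|x-y|^{-\alpha}\sim\varepsilon^\alpha$. The contributions from $B_0$ and from $B_k$ with $k\neq j$ are estimated exactly as in Proposition \ref{le. estimate of E}, giving $O(\varepsilon^{1+\alpha})$. The main obstacle is this bookkeeping of the nonlocal tails so that every remainder is genuinely $O(\varepsilon^{\min\{1+\alpha,2\}}\log^2\frac1\varepsilon)$, together with verifying the exact constant in the first-order term.
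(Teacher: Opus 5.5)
\textbf{There is a genuine gap: the interactions between different peaks.} Your skeleton matches the paper's: Taylor-expand $k(\varepsilon y)e^{V(y)}$ on $B_j\times B_j$, use the parity of $z_{i,j}$, and get the constant from the bubble equation. Your invariance argument that the $k=j$ bubble term is $O(\varepsilon^2)$ is correct. The gap is in the interactions with the other peaks, which is where the paper does most of its work.

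First, your reduction $\int_{\Omega_\varepsilon}\mathcal Ez_{i,j}=-\int_{\Omega_\varepsilon}F(V)z_{i,j}+O(\varepsilon^{\min\{1+\alpha,2\}})$ is false. The estimate \eqref{knotj1} only concerns $y\in B_j$, whereas for $k\neq j$ the bubble term is concentrated on $x,y\in B_k$. There, \eqref{eqbubble} and a Taylor expansion of $z_{i,j}$ around $\xi'_k$ give
$$\int_{\Omega_\varepsilon}\left(\int_{\Omega_\varepsilon}\frac{e^{w_k(y)}}{|x-y|^\alpha}dy\right)e^{w_k(x)}z_{i,j}(x)\,dx=2(4-\alpha)\pi\,z_{i,j}(\xi'_k)+O(\varepsilon^2),\qquad z_{i,j}(\xi'_k)=(4-\alpha)\mu_j\varepsilon\frac{(\xi_k-\xi_j)_i}{|\xi_k-\xi_j|^2}+O(\varepsilon^3).$$
This is exactly of the order of the main term. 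Symmetrically, your final claim that the $B_k$-contributions to $\int F(V)z_{i,j}$ are $O(\varepsilon^{1+\alpha})$ is false: the $B_k\times B_k$ part is $2(4-\alpha)\pi z_{i,j}(\xi'_k)+O(\varepsilon^2)$. The two errors cancel, which is why the paper keeps them paired in $J_{2,k}$. There the difference $e^{w_k(x)}e^{w_k(y)}-k(\varepsilon x)e^{V(x)}k(\varepsilon y)e^{V(y)}$ is relatively $O(\varepsilon(|x-\xi'_k|+|y-\xi'_k|))$ and $|z_{i,j}|=O(\varepsilon)$, giving $O(\varepsilon^2)$. A quicker fix is to note that $|z_{i,j}|\le C\varepsilon$ off $B_j$, so $|\int_{\Omega_\varepsilon\setminus B_j}\mathcal Ez_{i,j}|\le C\varepsilon\|\mathcal E\|_\ast$ by Proposition~\ref{le. estimate of E}.

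Second, the region $x\in B_j$, $y\in B_k$ of $F(V)$ cannot be bounded ``as in Proposition~\ref{le. estimate of E}''. There \eqref{knotj2} only yields $O(\varepsilon^\alpha)$, which for $\alpha\le1$ is not smaller than the $O(\varepsilon)$ main term. You need the cancellation in the paper's $J_{5,k}$:
\begin{itemize}
\item Set $P_k(x):=\int_{B_k}\frac{k(\varepsilon y)e^{V(y)}}{|x-y|^\alpha}dy$. On $B_j$ we have $|x-y|\ge\frac\tau\varepsilon$, so $P_k=O(\varepsilon^\alpha)$ and $|\nabla P_k|\le C\varepsilon^{1+\alpha}$.
\item The constant $P_k(\xi'_j)$ multiplies $\int_{B_j}k(\varepsilon x)e^{V(x)}z_{i,j}=\int_{B_j}e^{w_j}z_{i,j}\,(1+O(\varepsilon|x-\xi'_j|))=O(\varepsilon)$, by oddness.
\item The variation $P_k(x)-P_k(\xi'_j)=O(\varepsilon^{1+\alpha}|x-\xi'_j|)$ contributes $O(\varepsilon^{1+\alpha})$.
\end{itemize}
This is precisely the source of $\varepsilon^{1+\alpha}$ in the statement. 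Your heuristic ``$\varepsilon^\alpha\cdot\varepsilon$'' has the right size, but the extra $\varepsilon$ only comes from this argument.

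\textbf{The remaining two points are not real obstacles, but they must be carried out.} The $\varepsilon^{2-\alpha}$ you fear in the quadratic remainder is an artifact of discarding the kernel. Since $\int\frac{e^{w_j(x)}|z_{i,j}(x)|}{|x-y|^\alpha}dx\le C(1+|y-\xi'_j|)^{-\alpha}$, the $|y-\xi'_j|^2$ part is at most $C\varepsilon^2\int_{B_j}\frac{|y-\xi'_j|^2}{(1+|y-\xi'_j|)^4}dy=O(\varepsilon^2\log\frac1\varepsilon)$. The $|x-\xi'_j|^2$ part is $O(\varepsilon^2)$, so no cancellation is needed.

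The constant has to be computed, not matched. The $y$-linear term is nonlocal, and the paper makes it local via
$$\left(\int_{\mathbb R^2}\frac{e^{w_j(y)}z_{i,j}(y)}{|x-y|^\alpha}dy\right)e^{w_j(x)}=\frac{2\alpha\mu_j^2}{(\mu_j^2+|x-\xi'_j|^2)^2}z_{i,j}(x).$$
This follows by subtracting \eqref{eqbubble} from the linearized equation $-\Delta z_{i,j}=\frac{8\mu_j^2}{(\mu_j^2+|x-\xi'_j|^2)^2}z_{i,j}$. The two linear terms then add up to $\int_{\mathbb R^2}\frac{8\mu_j^2}{(\mu_j^2+|x-\xi'_j|^2)^2}z_{i,j}(x)(x_i-\xi'_{i,j})\,dx$. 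Note that this integral equals $2(4-\alpha)\pi\mu_j$, not $2(4-\alpha)\pi$, because $z_{i,j}=\mu_j\partial_{\xi_i}w_j$. The stated coefficient therefore misses a factor $\mu_j$. This is harmless, since $\frac1C\le\mu_j\le C$ and the reduced equations are unchanged after dividing by $\mu_j$, but it shows why matching against the claimed formula is not a substitute for the computation.
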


\begin{proof}
We split the integral as
\begin{align*}
\int_{\Omega_\varepsilon}\mathcal Ez_{i,j}=&\sum_{k=1}^m\int_{\Omega_\varepsilon}\left(\int_{\Omega_\varepsilon}\frac{e^{w_k(y)}}{|x-y|^\alpha}dy\right)e^{w_k(x)}z_{i,j}(x)dx-\int_{\Omega_\varepsilon}\left(\int_{\Omega_\varepsilon}\frac{k(\varepsilon y)e^{V(y)}}{|x-y|^\alpha}dy\right)k(\varepsilon x)e^{V(x)}z_{i,j}(x)dx\\
=&\underbrace{\int_{B_j\times B_j}\frac{e^{w_j(x)}e^{w_j(y)}-k(\varepsilon x)e^{V(x)}k(\varepsilon y)e^{V(y)}}{|x-y|^\alpha}z_{i,j}(x)dxdy}_{=:J_1}\\
=&\sum_{k\ne j}\underbrace{\int_{B_k\times B_k}\frac{e^{w_k(x)}e^{w_k(y)}-k(\varepsilon x)e^{V(x)}k(\varepsilon y)e^{V(y)}}{|x-y|^\alpha}z_{i,j}(x)dxdy}_{=:J_{2,k}}\\
&+\sum_{k=1}^m\underbrace{\int_{(\Omega_\varepsilon\times\Omega_\varepsilon)\setminus B_k\times B_k}\frac{e^{w_k(x)}e^{w_k(y)}}{|x-y|^\alpha}z_{i,j}(x)dxdy}_{=:J_{3,k}}\\
&-\underbrace{\int_{(\Omega_\varepsilon\times\Omega_\varepsilon)\setminus\bigcup_{k,l=1}^m(B_k\times B_l)}\frac{k(\varepsilon x)e^{V(x)}k(\varepsilon y)e^{V(y)}}{|x-y|^\alpha}z_{i,j}(x)dxdy}_{=:J_4}\\
&-\sum_{k\ne j}\underbrace{\int_{B_j\times B_k}\frac{k(\varepsilon x)e^{V(x)}k(\varepsilon y)e^{V(y)}}{|x-y|^\alpha}z_{i,j}(x)dxdy}_{=:J_{5,k}}\\
&-\sum_{k\ne j,l\ne k}\underbrace{\int_{B_k\times B_l}\frac{k(\varepsilon x)e^{V(x)}k(\varepsilon y)e^{V(y)}}{|x-y|^\alpha}z_{i,j}(x)dxdy}_{=:J_{6,k,l}},	
\end{align*}
where $B_k$ is defined as in \eqref{bj}.\\
From \eqref{eq. in ball} we deduce that, for $x,y\in B_j$,
\begin{align*}
&e^{w_j(x)}e^{w_j(y)}-k(\varepsilon x)e^{V(x)}k(\varepsilon y)e^{V(y)}\\
=&e^{w_j(x)}e^{w_j(y)}\left(\varepsilon\Psi_j\cdot\left(x-\xi'_j\right)+\varepsilon\Psi_j\cdot\left(y-\xi'_j\right)+O\left(\varepsilon^2\left(\left|x-\xi'_j\right|^2+\left|y-\xi'_j\right|^2\right)\right)\right),
\end{align*}
where
$$\Psi_j:=-\frac{\nabla k(\xi_j)}{k\left(\xi_j\right)}-\frac{4-\alpha}4\left(\nabla H(\xi_j,\xi_j)+\sum_{k\ne j}\nabla G(\xi_j,\xi_k)\right);$$
we remark that, due to the symmetry of $H(x,y),G(x,y)$ with respect to the variables, it holds
$$\Psi_j=\frac{4-\alpha}8\partial_{\xi_j}\Phi_m(\xi).$$
Therefore, exploiting the cancellations by symmetry, we get
\begin{align*}
J_1=&\varepsilon\int_{B_j\times B_j}\frac{e^{w_j(x)}e^{w_j(y)}}{|x-y|^\alpha}z_{i,j}(x)\left(\Psi_j\cdot\left(x-\xi'_j\right)+\Psi_j\cdot\left(y-\xi'_j\right)\right)dxdy\\
&+O\left(\varepsilon^2\int_{B_j\times B_j}\frac{e^{w_j(x)}e^{w_j(y)}}{|x-y|^\alpha}z_{i,j}(x)\left(\left|x-\xi'_j\right|^2+\left|y-\xi'_j\right|^2\right)dxdy\right)\\
=&\varepsilon\Psi_{i,j}\underbrace{\int_{B_j\times B_j}\frac{e^{w_j(x)}e^{w_j(y)}}{|x-y|^\alpha}z_{i,j}(x)\left(\left(x_i-\xi'_{i,j}\right)+\left(y_i-\xi'_{i,j}\right)\right)dxdy}_{=:J_1'}\\
&+O\left(\varepsilon^2\underbrace{\int_{B_j\times B_j}\frac1{|x-y|^\alpha\left(1+\left|x-\xi'_j\right|\right)^{3-\alpha}\left(1+\left|y-\xi'_j\right|\right)^{2-\alpha}}dxdy}_{=:J_1''}\right);
\end{align*}
the last term is indeed negligible because, arguing as in \eqref{i1j-i2j}, we get
\begin{align*}
J_1''=&O\left(\int_{\left\{|x-y|\le\frac{1+\left|x-\xi'_j\right|}2\right\}}\frac1{|x-y|^\alpha\left(1+\left|x-\xi'_j\right|\right)^{5-2\alpha}}dxdy\right.\\
&\left.+\int_{\left\{\frac{1+\left|x-\xi'_j\right|}2<|x-y|\le2\left(1+\left|x-\xi'_j\right|\right)\right\}}\frac1{\left(1+\left|x-\xi'_j\right|\right)^3\left(1+\left|y-\xi'_j\right|\right)^{2-\alpha}}dxdy\right.\\
&\left.+\int_{\left\{|x-y|>2\left(1+\left|x-\xi'_j\right|\right)\right\}}\frac1{\left(1+\left|x-\xi'_j\right|\right)^{3-\alpha}\left(1+\left|y-\xi'_j\right|\right)^2}dxdy\right)\\
=&O\left(\int_{\left\{\left|x-\xi'_j\right|\le\frac\tau\varepsilon\right\}}\left(\frac{1+\log\left(1+\left|x-\xi'_j\right|\right)}{\left(1+\left|x-\xi'_j\right|\right)^{3-\alpha}}\right)dx\right)\\
=&O\left(\varepsilon^{\min\{\alpha-1,0\}}\log^2\frac1\varepsilon\right).
\end{align*}
To estimate $J_1'$, we recall that, since $w_j$ solves the limiting problem \eqref{eq. limit problem}, then, for any $x\in\mathbb R^2$,
\begin{equation}\label{eqbubble}
\frac{2(4-\alpha)\mu_j^2}{\left(\mu_j^2+\left|x-\xi'_j\right|^2\right)^2}=-\Delta w_j(x)=\left(\int_{\mathbb R^2}\frac{e^{w_j(y)}}{|x-y|^\alpha}dy\right)e^{w_j(x)};
\end{equation}
similarly, since $z_{i,j}$ is in the kernel of the limiting operator $\mathcal L_j$ defined in \eqref{lj}, then
\begin{align*}
\frac{8\mu_j^2}{\left(\mu_j^2+\left|x-\xi'_j\right|^2\right)^2}z_{i,j}(x)=&-\Delta z_{i,j}(x)\\
=&\left(\int_{\mathbb R^2}\frac{e^{w_j(y)}z_{i,j}(y)}{|x-y|^\alpha}dy\right)e^{w_j(x)}+\left(\int_{\mathbb R^2}\frac{e^{w_j(y)}}{|x-y|^\alpha}dy\right)e^{w_j(x)}z_{i,j}(x);
\end{align*}
Putting together with \eqref{eqbubble}, we get:
\begin{align*}
\left(\int_{\mathbb R^2}\frac{e^{w_j(y)}}{|x-y|^\alpha}dy\right)e^{w_j(x)}z_{i,j}(x)=&\frac{2(4-\alpha)\mu_j^2}{\left(\mu_j^2+\left|x-\xi'_j\right|^2\right)^2}z_{i,j}(x),\\
\left(\int_{\mathbb R^2}\frac{e^{w_j(y)}z_{i,j}(y)}{|x-y|^\alpha}dy\right)e^{w_j(x)}=&\frac{2\alpha\mu_j^2}{\left(\mu_j^2+\left|x-\xi'_j\right|^2\right)^2}z_{i,j}(x);
\end{align*}
on the other hand, from \eqref{triang} we get
\begin{align*}
&\int_{\mathbb R^2\setminus B_j}\frac{e^{w_j(y)}\left(1+|z_{i,j}(y)|\left|y-\xi'_j\right|\right)}{|x-y|^\alpha}dy\\
=&O\left(\int_{\left\{\left|y-\xi'_j\right|>\frac\tau\varepsilon\right\}}\frac1{|x-y|^\alpha\left(1+\left|y-\xi'_j\right|\right)^{4-\alpha}}dy\right)\\
=&O\left(\int_{\left\{|y-x|\le\frac{1+\left|x-\xi'_j\right|}2\right\}}\frac1{|x-y|^\alpha\left(1+\left|x-\xi'_j\right|\right)^{4-\alpha}}dy\right.\\
&\left.+\int_{\left\{\frac{1+\left|x-\xi'_j\right|}2<|y-x|\le2\left(1+\left|x-\xi'_j\right|\right)\right\}}\frac1{\left(1+\left|x-\xi'_j\right|\right)^\alpha\left(1+\left|y-\xi'_j\right|\right)^{4-\alpha}}dy\right.\\
&\left.+\int_{\left\{\left|y-\xi'_j\right|>\frac\tau\varepsilon\right\}}\frac1{\left(1+\left|y-\xi'_j\right|\right)^4}dy\right)\\
=&O\left(\frac1{\left(1+\left|x-\xi'_j\right|\right)^2}+\varepsilon^2\right)\\
=&O\left(\varepsilon^2\right)
\end{align*}
and, similarly,
\begin{align*}
\int_{\mathbb R^2\setminus B_j}\frac{e^{w_j(y)}|z_{i,j}(y)|}{|x-y|^\alpha}dy=&O\left(\int_{\left\{\left|y-\xi'_j\right|>\frac\tau\varepsilon\right\}}\frac1{|x-y|^\alpha\left(1+\left|y-\xi'_j\right|\right)^{5-\alpha}}dy\right)=O\left(\varepsilon^3\right),\\
\int_{\mathbb R^2\setminus B_j}\frac{e^{w_j(y)}\left|y-\xi'_j\right|}{|x-y|^\alpha}dy=&O\left(\int_{\left\{\left|y-\xi'_j\right|>\frac\tau\varepsilon\right\}}\frac1{|x-y|^\alpha\left(1+\left|y-\xi'_j\right|\right)^{3-\alpha}}dy\right)=O(\varepsilon).\\
\end{align*}
Therefore,
\begin{align*}
J_1'=&\int_{\mathbb R^2}\left(\int_{\mathbb R^2}\frac{e^{w_j(y)}}{|x-y|^\alpha}dy\right)e^{w_j(x)}z_{i,j}(x)\left(x_i-\xi'_{i,j}\right)dx+\int_{\mathbb R^2}\left(\int_{\mathbb R^2}\frac{e^{w_j(x)}z_{i,j}(x)}{|x-y|^\alpha}dx\right)e^{w_j(y)}\left(y_i-\xi'_{i,j}\right)dy\\
&+O\left(\int_{B_j}\left(\int_{\mathbb R^2\setminus B_j}\frac{e^{w_j(y)}}{|x-y|^\alpha}dy\right)e^{w_j(x)}|z_{i,j}(x)|\left|x-\xi'_j\right|dx\right.\\
&\left.+\int_{\mathbb R^2}\left(\int_{\mathbb R^2\setminus B_j}\frac{e^{w_j(x)}|z_{i,j}(x)|\left|x-\xi'_j\right|}{|x-y|^\alpha}dx\right)e^{w_j(y)}dy\right.\\
&\left.+\int_{B_j}\left(\int_{\mathbb R^2\setminus B_j}\frac{e^{w_j(y)}|z_{i,j}(y)|}{|x-y|^\alpha}dy\right)e^{w_j(x)}\left|x-\xi'_j\right|dx\right.\\
&\left.+\int_{\mathbb R^2}\left(\int_{\mathbb R^2\setminus B_j}\frac{e^{w_j(x)}\left|x-\xi'_j\right|}{|x-y|^\alpha}dx\right)e^{w_j(y)}|z_{i,j}(y)|dy\right)\\
=&\int_{\mathbb R^2}\frac{8\mu_j^2}{\left(\mu_j^2+\left|x-\xi'_j\right|^2\right)^2}z_{i,j}(x)\left(x_i-\xi'_{i,j}\right)dx\\
&+O\left(\varepsilon^2\int_{\mathbb R^2}e^{w_j}dx+\varepsilon^3\int_{B_j}e^{w_j(x)}\left|x-\xi'_j\right|dx+\varepsilon\int_{\mathbb R^2}e^{w_j(y)}|z_{i,j}(y)|dy\right)\\
=&2(4-\alpha)\pi+O(\varepsilon);
\end{align*}
we then showed that
$$J_1=2(4-\alpha)\pi\Psi_{i,j}\varepsilon+O\left(\varepsilon^{\min\{1+\alpha,2\}}\log^2\frac1\varepsilon\right)=\left(-\frac{(4-\alpha)^2\pi}4\partial_{\xi_{i,j}}\Phi_m(\xi)\right)\varepsilon+O\left(\varepsilon^{\min\{1+\alpha,2\}}\log^2\frac1\varepsilon\right),$$
hence we are left with showing that $J_{2,k},J_{3,k},J_4,J_{5,k},J_{6,k,l}$ are all lower order terms.\\
To handle $J_{2,k}$, we use that, again from \eqref{eq. in ball}, if $x,y\in B_k$, then
$$e^{w_j(x)}e^{w_j(y)}-k(\varepsilon x)e^{V(x)}k(\varepsilon y)e^{V(y)}=O\left(\varepsilon\frac{\left|x-\xi'_k\right|+\left|y-\xi'_k\right|}{\left(1+\left|x-\xi'_k\right|\right)^{4-\alpha}\left(1+\left|y-\xi'_k\right|\right)^{4-\alpha}}\right);$$
moreover, if $k\ne j$, then $|z_{i,j}(x)|=O(\varepsilon)$, hence:
\begin{align*}
J_{2,k}=&O\left(\varepsilon^2\int_{B_k\times B_k}\frac1{|x-y|^\alpha\left(1+\left|x-\xi'_k\right|\right)^{3-\alpha}\left(1+\left|y-\xi'_k\right|\right)^{4-\alpha}}dxdy\right)\\
=&O\left(\varepsilon^2\int_{B_k}\frac1{\left(1+\left|x-\xi'_k\right|\right)^3}dx\right)\\
=&O\left(\varepsilon^2\right).
\end{align*}
Then, we recall that outside $B_k$ we have $e^{w_k}=O\left(\varepsilon^{4-\alpha}\right)$, hence:
$$J_{3,k}=O\left(\varepsilon^{4-\alpha}\int_{\Omega_\varepsilon\times\Omega_\varepsilon}\frac1{|x-y|^\alpha}\frac1{\left(1+\left|y-\xi'_k\right|\right)^{4-\alpha}}dxdy\right)=O\left(\varepsilon^{4-\alpha}\int_{\Omega_\varepsilon}\frac1{\left(1+\left|x-\xi'_k\right|\right)^\alpha}dx\right)=O\left(\varepsilon^2\right);$$
similarly, since $e^V=O\left(\varepsilon^{4-\alpha}\right)$ in $B_0$, from \eqref{eq. out ball} we get
$$J_4=O\left(\varepsilon^{4-\alpha}\sum_{k=1}^m\int_{\Omega_\varepsilon\times\Omega_\varepsilon}\frac1{|x-y|^\alpha\left(1+\left|y-\xi'_k\right|\right)^{4-\alpha}}dxdy\right)=O\left(\varepsilon^2\right).$$
As for $J_{5,k}$, we use \eqref{eq. in ball}, the fact that $|x-y|\ge\frac\tau\varepsilon$ for any $x\in B_k,y\in B_j$ and some cancellations due to symmetry:
\begin{align*}
J_{5,k}=&\int_{B_j\times B_k}\frac{e^{w_k(y)}e^{w_j(x)}\left(1+O\left(\varepsilon\left|x-\xi'_j\right|+\varepsilon\left|y-\xi'_k\right|\right)\right)}{|x-y|^\alpha}z_{i,j}(x)dxdy\\
=&\int_{B_j}\left(\int_{\mathbb R^2}\frac{e^{w_k(y)}}{|x-y|^\alpha}dy+O\left(\varepsilon^2\right)\right)e^{w_j(x)}z_{i,j}(x)dx\\
&+O\left(\varepsilon^{1+\alpha}\int_{B_j\times B_k}\frac1{\left(1+\left|x-\xi'_k\right|\right)^{3-\alpha}\left(1+\left|y-\xi'_j\right|\right)^{4-\alpha}}dxdy\right)\\
=&\int_{B_j}\frac{2\sqrt{\frac{\pi(4-\alpha)}{2-\alpha}}\mu_k^\frac\alpha2}{\left(\mu_k^2+\left|x-\xi'_k\right|^2\right)^{\frac\alpha2}}e^{w_j(x)}z_{i,j}(x)dx+O\left(\varepsilon^{1+\alpha}\right)\\
=&\int_{B_j}\left(\frac{2\sqrt{\frac{\pi(4-\alpha)}{2-\alpha}}\mu_k^\frac\alpha2}{\left|\xi'_j-\xi'_k\right|^\alpha}+O\left(\varepsilon\left|x-\xi'_j\right|+\varepsilon^2\right)\right)e^{w_j(x)}z_{i,j}(x)dx+O\left(\varepsilon^{1+\alpha}\right)\\
=&O\left(\varepsilon^{\min\{1+\alpha,2\}}\right);
\end{align*}
finally, using again the distance between the balls $B_k$ and $B_l$, \eqref{eq. out ball} and $|z_{i,j}|=O(\varepsilon)$ on $B_k$ for $k\ne j$,
$$J_{6,k,l}=O\left(\varepsilon^{1+\alpha}\int_{B_k\times B_l}\frac1{\left(1+\left|x-\xi'_k\right|\right)^{4-\alpha}\left(1+\left|y-\xi'_l\right|\right)^{4-\alpha}}dxdy\right)=O\left(\varepsilon^{1+\alpha}\right),$$
which concludes the proof.
\end{proof}\

\begin{proof}[Proof of Theorem \ref{th.1}] From Proposition \ref{le. unique solu} we get a solution to problem \eqref{eq. projected}, hence we suffice to get $c_{i,j}=0$ for all $i=1,2$ and $j=1,\dots,m$. Moreover, since $\alpha>\frac12$, from Lemmas \ref{le. c=0} and \ref{estimateLN}, we deduce that $\int_{\Omega_\varepsilon}\mathcal Ez_{i,j}=0$ implies, for $\varepsilon>0$ small enough, $c_{i,j}=0$. Therefore, by Proposition \ref{mainorder}, if the functional $\Phi_m(\xi)$ has a stable critical point, then we obtain a solution to \eqref{eq. original problem}.\\
From the explicit definition of $U$ and the fact that $\|\phi\|_\infty\underset{\varepsilon\to0}\to0$ we easily get the qualitative properties as stated in Theorem \ref{th.1}.
\end{proof}\

\begin{proof}[Proof of Theorem \ref{m=1}]
If $m=1$, then Proposition \ref{le. estimate of E} reads as $\|\mathcal E\|_\ast\le C\varepsilon$, since the terms \eqref{knotj1} and \eqref{knotj2} of order $\varepsilon^\alpha$ only appear if $m\ge2$. Therefore, as is clear from the proof, in Proposition \ref{le. unique solu} one gets $\|\phi\|_\infty\le M\varepsilon\log\frac1\varepsilon$, hence the estimate in Lemma \ref{estimateLN} is of order $\varepsilon^2\log^2\frac1\varepsilon$; such terms are thus always negligible in the expansion in Proposition \ref{mainorder} and we may conclude as in the proof of Theorem \ref{th.1}.\\
Finally, we show that $\Phi_1$ has a stable critical point, yielding a solution to problem \eqref{eq. original problem}: since $H(\xi,\xi)\underset{\xi\to\partial\Omega}\to-\infty$, then the same holds true for $\Phi_1(\xi)=\frac8{4-\alpha}\log k(\xi)+H(\xi,\xi)$, hence $\Phi_1(\xi)$ attains its global maximum at some point of $\Omega$, which is necessarily a stable critical point.
\end{proof}\

\section*{Acknowledgments}
The first author is supported by MUR-PRIN-2022AKSNE ``Variational and Analytical  aspects of Geometric PDEs'' and by the INdAM-GNAMPA project, CUP E53C25002010001 ``Punti critici e concentrazioni di soluzioni di PDEs''.\\
The second author is supported by Southwest University graduate research innovation project (NO. SWUB25029).

\end{document}